\numberwithin{equation}{section}
\newcommand{\CC} {\mathbb{C}}
\newcommand{\Z} {\mathbb{Z}}
\newcommand{\Q} {\mathbb{Q}}
\newcommand{\PP}{\mathbb{P}}
\renewcommand{\P} {\mathbb{P}}
\newcommand{\normaliz}[1]{\widetilde{#1}}
\newcommand{\CoverCurve}[1]{#1'}
\DeclareMathOperator{\Ext}{Ext}
\DeclareMathOperator{\Hom}{Hom}
\DeclareMathOperator{\Nm}{Nm}
\DeclareMathOperator{\ord}{ord}
\DeclareMathOperator{\Pic}{Pic}
\DeclareMathOperator{\Sym}{Sym}
\DeclareMathOperator{\Supp}{Supp}
\DeclareMathOperator{\rk}{rk}
\newtheorem{proposition}{Proposition}[section]
\newtheorem{theorem}[proposition]{Theorem}
\newtheorem*{theorem*}{Theorem}
\newtheorem{corollary}[proposition]{Corollary}
\newtheorem*{corollary*}{Corollary}
\newtheorem*{conjecture*}{Conjecture}
\newtheorem{lemma}[proposition]{Lemma}
\newtheorem*{lemma*}{Lemma}
\theoremstyle{definition}
\newtheorem{definition}[proposition]{Definition}
\newtheorem*{definition*}{Definition}
\newtheorem{example}[proposition]{Example}
\newtheorem*{example*}{Example}
\newtheorem{remark}[proposition]{Remark}
\newtheorem*{remark*}{Remark}
\newtheorem*{question*}{Question}
\newtheorem*{result*}{Result}
\newcommand{\MgSymbol}{\mathcal{M}}
\newcommand{\RgSymbol}{\mathcal{R}}
\newcommand{\RgStackSymbol}{\mathsf{R}}
\newcommand{\AgSymbol}{\mathcal{A}}
\newcommand{\UnivGrdSymbol}{\mathfrak{G}}
\newcommand{\UnivCrdSymbol}{\mathfrak{C}}
\newcommand{\Mg}[1]{\MgSymbol_{#1}}
\newcommand{\Ag}[1]{\AgSymbol_{#1}}
\newcommand{\Rg}[1]{\RgSymbol_{#1}}
\newcommand{\MgBar}[1]{\overline{\MgSymbol}_{#1}}
\newcommand{\RgBar}[1]{\overline{\RgSymbol}_{#1}}
\newcommand{\RgBarStack}[1]{\overline{\RgStackSymbol}_{#1}}
\newcommand{\Rootgl}[1]{\operatorname{Root}_{#1}}
\newcommand{\BoundaryPCZero}{\Delta^0_0}
\newcommand{\RgPCZero}[1]{\RgSymbol^0_{#1}}
\newcommand{\RgBarDesing}[1]{\widehat{\RgSymbol}_{#1}}
\newcommand{\UnivGrd}[2]{\UnivGrdSymbol^{#1}_{#2}}
\newcommand{\UnivGrdTors}[3]{\UnivGrdSymbol^{#1,(#3)}_{#2}}
\newcommand{\UnivCrdTors}[3]{\UnivCrdSymbol^{#1,(#3)}_{#2}}
\newcommand{\PrymPoincare}{\mathscr{P}}
\newcommand{\PoincareBundle}{\mathscr{L}}
\newcommand{\RgForgetfulMap}{\pi}
\newcommand{\UnivCrdTorsMorph}{\chi}
\newcommand{\UnivGrdTorsMorph}{\sigma}
\newcommand{\MukaiDivSymbol}{\mathcal{B}}
\newcommand{\MukaiDiv}[2]{\MukaiDivSymbol_{#1,#2}} 
\newcommand{\MukaiDivBar}[2]{\overline{\MukaiDivSymbol}_{#1,#2}}
\newcommand{\MukaiMg}[1]{\MgSymbol_{#1}^\mu}
\newcommand{\MukaiRg}[1]{\RgSymbol_{#1}^\mu}
\newcommand{\Addresses}{{
  \bigskip
  \footnotesize
  \textit{E-mail address}:  \texttt{math@gregorbruns.eu}




}}
\newcommand{\SerreDual}[1]{\omega_C \otimes #1^{-1}}
\newcommand{\RgPrime}[1]{\RgSymbol^\prime_{#1}}
\newcommand{\virt}{\mathrm{virt}}
\newcommand{\DegenMorph}{\phi}
\title{Twists of Mukai bundles and the geometry of the level $\bm{{3}}$ modular variety over $\bm{{\MgBar{8}}}$}
\author{Gregor Bruns}
\date{}
\begin{document}

\maketitle

\begin{abstract}
  For a curve $C$ of genus $6$ or $8$ and a torsion bundle $\eta$ of order $\ell$
  we study the vanishing of the space of global sections of the twist $E_C \otimes \eta$
  of the rank two Mukai bundle $E_C$ of $C$.
  The bundle $E_C$ was used in a well-known construction of
  Mukai which exhibits general canonical curves of low genus
  as sections of Grassmannians in the Plücker embedding.

  Globalizing the vanishing condition, we obtain divisors on the moduli spaces
  $\RgBar{6,\ell}$ and $\RgBar{8,\ell}$ of pairs $[C, \eta]$.
  First we characterize these divisors
  by different conditions on linear series on the level curves,
  afterwards we calculate the divisor classes.
  As an application, we are able to prove that $\RgBar{8,3}$ is of general type.
\end{abstract}

\section{Introduction}

It is a famous result of Mukai (see \autocite{Mukai1993}) that a general canonical genus $8$ curve
is a linear section of the $8$-dimensional Grassmannian $G(2,6)$ in $\P^{14}$.
In a similar fashion, the general genus $6$ curve is the complete intersection
of a $4$-dimensional quadric and the $6$-dimensional Grassmannian $G(2,5)$ in $\P^{9}$.%

In both cases, the maps from the curve $C$ to the Grassmannian are induced by the global sections
of an (up to isomorphism) uniquely determined stable rank $2$ bundle $E_C$ with canonical determinant,
which we call the \emph{Mukai bundle} of $C$.
We have $h^0(C, E_C) = 5$ in genus $6$ and $h^0(C, E_C) = 6$ in genus $8$.
Since the bundle $E_C$ captures the geometry of $C$, it is a natural problem to study
loci of curves where $E_C$ shows non-generic behaviour.  In particular, we are
interested in divisorial conditions involving $E_C$ on moduli spaces of curves.

We let $\ell$ be a prime number, $C$ a general curve of genus $g = 6$ or $g = 8$
and $\eta \in \Pic^0(C)[\ell]$ a line bundle of order $\ell$.
Then we can consider the twisted bundle $E_C \otimes \eta$ and in particular its
space of global sections $H^0(C, E_C \otimes \eta)$.  Since the slope
of $E_C$ is $g - 1$, we expect $H^0(C, E_C \otimes \eta) = 0$ and the locus
\begin{equation*}
  \left\{ [C] \in \Mg{g} ~\left|~
      H^0(E_C \otimes \eta) \not= 0\text{ for some } \eta \in \Pic^0(C)[\ell]
    \setminus \{ \mathcal{O}_C \} \right.\right\}
\end{equation*}
to be a divisor in $\Mg{g}$, the moduli space of curves.
In fact it is more natural to study the question on the
modular variety $\Rg{g,\ell}$ parametrizing pairs $[C, \eta]$
of smooth curves of genus $g$ together with a nontrivial $\ell$-torsion line bundle.  These spaces
have been constructed and compactified in \autocite{CEFS2013}.  On $\Rg{g,\ell}$ we define the locus
\begin{equation*}
  \MukaiDiv{g}{\ell} = \left\{ [C, \eta] \in \Rg{g,\ell}
    ~\left|~ H^0(C, E_C\otimes \eta) \not= 0 \right.\right\}
\end{equation*}
which is of codimension at most one in $\Rg{g,\ell}$ and expected to be a divisor.  This we prove:
\begin{theorem}
  \label{thm:bgl-are-divisors}
  In both $g = 6$ and $g = 8$ and for every prime $\ell$ the locus $\MukaiDiv{g}{\ell}$
  is a divisor in $\Rg{g,\ell}$.
\end{theorem}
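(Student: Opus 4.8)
The plan rests on a reformulation of the vanishing condition and on recognising $\MukaiDiv{g}{\ell}$ as a degeneracy locus. First observe that $E_C$ is globally generated --- its $g/2+2$ sections define the Mukai morphism to $G(2,h^0(E_C))$ --- and has rank $2>1=\dim C$, so a general section of $E_C$ is nowhere vanishing and yields an exact sequence $0\to\mathcal O_C\to E_C\to\omega_C\to 0$ with extension class $e\in\Ext^1(\omega_C,\mathcal O_C)=H^1(C,\omega_C^{-1})$. Twisting by a nontrivial $\eta\in\Pic^0(C)[\ell]$ and using $H^0(\eta)=0$, the long exact sequence identifies $H^0(C,E_C\otimes\eta)$ with the kernel of the cup-product map $H^0(\omega_C\otimes\eta)\xrightarrow{\,\cup\,e\,}H^1(\eta)$. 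By Riemann--Roch $\chi(E_C\otimes\eta)=\deg E_C+2(1-g)=0$; since moreover $h^1(\omega_C\otimes\eta)=h^0(\eta^{-1})=0$, both source and target of this map have dimension $g-1$. Hence $[C,\eta]\in\MukaiDiv{g}{\ell}$ precisely when $\cup\,e$ fails to be an isomorphism.

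Let $\mathcal U\subseteq\Rg{g,\ell}$ be the open dense locus of pairs whose curve carries the Mukai bundle (its complement being a union of Brill--Noether loci), so it suffices to prove the statement over $\mathcal U$ and pass to closures. Over $\mathcal U$, working on the stack or an étale cover and allowing an immaterial twist by a line bundle pulled back from $\mathcal U$, the construction above globalises to a relative sequence $0\to\mathcal O_{\mathcal C}\to\mathcal E\to\omega_\pi\to 0$ on the universal curve $\pi\colon\mathcal C\to\mathcal U$, which together with the universal torsion bundle $\mathcal N$ yields a morphism $\phi\colon\pi_\ast(\omega_\pi\otimes\mathcal N)\to R^1\pi_\ast\mathcal N$ between vector bundles of common rank $g-1$ (local freeness holds because $\mathcal N$ is fibrewise nontrivial) whose fibrewise kernel is $H^0(E_C\otimes\eta)$. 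Therefore $\MukaiDiv{g}{\ell}\cap\mathcal U$ is the zero locus of $\det\phi$, a section of the line bundle $\det R^1\pi_\ast\mathcal N\otimes(\det\pi_\ast(\omega_\pi\otimes\mathcal N))^{-1}$ on $\mathcal U$: it is either all of $\mathcal U$, or a genuine divisor whose closure in $\Rg{g,\ell}$ is again a divisor. (That $\MukaiDiv{g}{\ell}$ is moreover nonempty, as is needed to call it a divisor, follows from the nonvanishing of its class computed later in the paper, and is easy to check directly.) The theorem thus reduces to exhibiting a single pair $[C,\eta]$ with $H^0(C,E_C\otimes\eta)=0$.

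For this I would study the theta divisor $\Theta_{E_C}=\{L\in\Pic^0(C):H^0(E_C\otimes L)\neq 0\}$, so that $\MukaiDiv{g}{\ell}=\{[C,\eta]:\eta\in\Theta_{E_C}\}$, and argue in two steps. Step one: for a general curve $\Theta_{E_C}\subsetneq\Pic^0(C)$ is a proper divisor (automatically symmetric, of class $2\theta$). I would deduce this from the structure of $E_C$ as a non-split extension $0\to A\to E_C\to B\to 0$ with $A\otimes B=\omega_C$, where in genus $6$ the bundle $A$ is a $g^1_4$ and $B$ a $g^2_6$, and in genus $8$ the bundle $A$ is a $g^1_5$ and $B$ a $g^3_9$: twisting by a general $L$ annihilates $H^0(A\otimes L)$, so $H^0(E_C\otimes L)$ becomes the kernel of the connecting map $H^0(B\otimes L)\to H^1(A\otimes L)$ between spaces of the same dimension, and this kernel vanishes for general $L$ provided the cup-product pairing determined by the (nonzero) extension class is nondegenerate --- something one verifies at a single convenient model curve. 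Step two: some nontrivial $\ell$-torsion point lies off $\Theta_{E_C}$. Since $\Theta_{E_C}$ belongs to $|2\Theta_\kappa|$ for a theta characteristic $\kappa$, and the $\ell^{2g}-1\ge 2^g=h^0(2\theta)$ nonzero $\ell$-torsion points of a general Jacobian impose independent conditions on $|2\theta|$ (classical theta-function theory), no nonzero section of $\mathcal O(2\Theta_\kappa)$ vanishes at all of them; hence for a general curve $\Theta_{E_C}$ cannot contain all nonzero $\ell$-torsion points, and choosing one it misses gives the required pair.

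The delicate point is step one --- the nondegeneracy of the cup-product pairing attached to $E_C$. The Mukai bundle is pinned down only by an existence-and-uniqueness property; the sub-line-bundles of atypically large degree coming from the special pencils on $C$ are precisely those that could force $H^0(E_C\otimes L)$ to jump; and the extension class of $E_C$ is itself constrained, since it must make the connecting map at $L=\mathcal O_C$ vanish in order to account for $h^0(E_C)=g/2+2$ --- so the verification has to be carried out carefully on a concrete curve. Should that turn out to be awkward, the same pair $[C,\eta]$ can instead be produced by a degeneration inside the compactification $\RgBar{g,\ell}$ of \autocite{CEFS2013}, specialising to a boundary point at which a flat limit of $E_C$ is explicit and $H^0=0$ reduces to a linear-series statement on a curve of lower genus. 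Everything else --- the Riemann--Roch computation, the degeneracy-locus formalism, and passing to the closure across the Brill--Noether locus where $E_C$ is undefined --- is routine.
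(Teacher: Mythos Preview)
Your reduction to a degeneracy locus, and thence to exhibiting a single pair with $H^0(E_C\otimes\eta)=0$, is sound and matches the paper's logic. Step one is more laborious than needed: since $E_C$ is semistable of rank~$2$, Raynaud's result (\autocite{Raynaud1982}, Proposition~1.6.2, already cited at the opening of Section~\ref{sec:divisors}) gives $\Theta_{E_C}\subsetneq\Pic^0(C)$ for \emph{every} curve carrying $E_C$, with no model computation or cup-product verification required.

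The genuine gap is in Step two. You claim the nonzero $\ell$-torsion points impose independent conditions on $|2\Theta|$, appealing to ``classical theta-function theory''. For $\ell=2$ this is indeed the irreducibility of the theta-group representation on $H^0(J,\mathcal O_J(2\Theta))$. But for an odd prime $\ell$ the set $J[\ell]$ is not stable under translation by $J[2]$, so the theta group of $\mathcal O_J(2\Theta)$ says nothing about the span of $f_2(J[\ell])$, and no classical statement fills this in. The paper supplies precisely the missing idea: pad $E_C$ to $F=E_C\oplus\vartheta^{\oplus(\ell-2)}$ for a theta characteristic $\vartheta$, which is still semistable and now has $\Theta_F=\Theta_{E_C}+(\ell-2)\Theta\in|\ell\Theta|$. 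The theta group of $\mathcal O_J(\ell\Theta)$ \emph{does} act through $J[\ell]$; irreducibility forces $f_\ell(J[\ell])$ to span $\PP^{\ell^g-1}$, so some $\eta\in J[\ell]$ misses $\Theta_F$ and hence $\Theta_{E_C}$, and since $\mathcal O_C\in\Theta_{E_C}$ this $\eta$ is nontrivial. The argument works on every curve, not just a general one, and in fact yields the more general Theorem~\ref{thm:vector-bundle-vanishing}.
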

One of the primary motivations to study pairs $[C, \eta]$ originates from
the study of \emph{Prym varieties}.
Recall that, for $\ell = 2$, such a pair $[C, \eta] \in \Rg{g,2}$ corresponds to
an étale double cover $\pi\colon \CoverCurve{C} \rightarrow C$.
The Prym variety $\Pr(C, \eta)$ associated to $\pi$ is an abelian variety of dimension $g - 1$
which we construct by considering the \emph{Norm map}
\begin{equation*}
  \Nm_\pi\colon \Pic^{2g - 2}(\CoverCurve{C}) \rightarrow \Pic^{2g - 2}(C),
  \quad
  \mathcal{O}_{\CoverCurve{C}}(D) \mapsto \mathcal{O}_C(\pi_\ast D)
\end{equation*}
and then letting
\begin{equation*}
  \Pr(C, \eta) = \Nm_\pi^{-1}(K_C)^\mathrm{+} =
  \big\{
  L \in \Nm_\pi^{-1}(K_C) ~\big|~ h^0(C, L) \equiv 0 \pmod{2}
  \big\}
\end{equation*}
One can show that $\Pr(C, \eta)$ is principally polarized.
We then get a morphism
\begin{equation*}
  \Rg{g,2} \rightarrow \Ag{g-1}, \quad [C,\eta] \mapsto \Pr(C, \eta)
\end{equation*}
called the \emph{Prym map}, to the moduli space $\Ag{g-1}$
of principally polarized abelian varieties of dimension $g - 1$.
Prym varieties play an important role in the study of $\Ag{g}$
since a general abelian variety of dimension at most $5$ is a Prym.
On the other hand, recall that the general abelian variety of dimension at
least $4$ is not the Jacobian of a curve.
Hence Prym varieties make the study of abelian varieties amenable to techniques from curve theory
in a larger range than by just studying Jacobians.

For $\ell \geq 3$ one can analogously assign a cyclic unramified cover
$\CoverCurve{C} \rightarrow C$ of degree $\ell$ to any pair
$[C, \eta] \in \Rg{g,\ell}$.  However, this process is only
reversible if we consider such covers together with a generator of their Galois group.

Our main interest lies in furthering the understanding of the birational geometry of
the spaces $\Rg{g,\ell}$.
It is known (see \autocite{FL2010} and \autocite{Bruns2016})
that $\Rg{g,2}$ is of general type for $g \geq 14$ while $\Rg{g,3}$ is known to be of
general type for $g \geq 12$ (see \autocite{CEFS2013}).  We can use the divisor
$\MukaiDiv{8}{3}$ previously constructed to prove the following:
\begin{theorem}
  \label{thm:main-theorem}
  $\Rg{8,3}$ is of general type.
\end{theorem}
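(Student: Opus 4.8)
The plan follows the standard route to proving that a compactified moduli space of curves with level structure is of general type, the new ingredient being the divisor $\MukaiDivBar{8}{3}$, which supplies positivity that $\MgBar{8}$ itself lacks.

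\textbf{Reduction to bigness of the canonical class.} From the analysis of the singularities and the boundary of $\RgBar{g,\ell}$ in \autocite{CEFS2013} one knows that, for $g\geq 4$ and $\ell$ prime, there is a projective resolution $\RgBarDesing{8,3}\to\RgBar{8,3}$ with
\[
  H^{0}\bigl(\RgBarDesing{8,3},\,mK_{\RgBarDesing{8,3}}\bigr)\;\cong\;H^{0}\bigl(\RgBar{8,3},\,mK_{\RgBar{8,3}}\bigr)\qquad(m\geq 1),
\]
i.e.\ every pluricanonical form extends across the singular locus and the exceptional divisor. Hence it suffices to show that $K_{\RgBar{8,3}}$ is big as a $\mathbb{Q}$-divisor class. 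I would then recall from \autocite{CEFS2013} the explicit expression
\[
  K_{\RgBar{8,3}}\;=\;13\lambda\;-\;\sum_{j}b_{j}\,\delta_{j},
\]
where the $\delta_{j}$ are the boundary divisors of $\RgBar{8,3}$ — the components lying over $\Delta_{0},\Delta_{1},\dots,\Delta_{4}\subset\MgBar{8}$, sorted by the degeneration behaviour of $\eta$ — and the $b_{j}$ are explicit positive rationals; the excess of $K_{\RgBar{8,3}}$ over $\RgForgetfulMap^{*}K_{\MgBar{8}}$ is accounted for by the ramification of $\RgForgetfulMap$ along the boundary.

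\textbf{The effective decomposition.} The positivity input is that $\lambda$ is big on $\RgBar{8,3}$: it is pulled back from $\MgBar{8}$, where it is nef and big because it is the pull-back of the ample class on the Satake compactification of $\Ag{8}$ under the extended Torelli morphism, which is generically injective; both properties survive pull-back along the finite map $\RgForgetfulMap$. The main computation is to produce a relation
\[
  K_{\RgBar{8,3}}\;=\;A\,[\MukaiDivBar{8}{3}]\;+\;B\,E\;+\;\sum_{j}c_{j}\,\delta_{j}\;+\;\varepsilon\,\lambda
\]
with $A,B\geq 0$, all $c_{j}\geq 0$ and $\varepsilon>0$, where $E$ is an effective class already available on $\RgBar{8,3}$ — for instance $\RgForgetfulMap^{*}[\overline{\mathcal{M}}^{2}_{8,7}]$, the pull-back of the Brill–Noether divisor of curves carrying a $\mathfrak{g}^{2}_{7}$ (which has $\rho=-1$ and slope $6+\tfrac{12}{9}=\tfrac{22}{3}$). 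Since $\lambda$ together with the boundary classes spans the rational Picard group of $\RgBar{8,3}$, expanding both sides of the relation in these generators reduces it to a finite system of linear inequalities: one needs $A\lambda_{1}+B\lambda_{2}<13$, where $\lambda_{1},\lambda_{2}$ are the $\lambda$-coefficients of $[\MukaiDivBar{8}{3}]$ and of $E$, and, for every boundary divisor $\delta_{j}$, the coefficient of $\delta_{j}$ in $A[\MukaiDivBar{8}{3}]+BE$ must be at least $b_{j}$ (so that $c_{j}\geq 0$). Any solution $A,B,\varepsilon$ exhibits $K_{\RgBar{8,3}}$ as a sum of effective classes and the positive multiple $\varepsilon\lambda$ of a big class, hence $K_{\RgBar{8,3}}$ is big and the theorem follows from the reduction above.

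\textbf{Main obstacle.} The crux is the feasibility of this linear system, and this is where the geometry enters: $\MgBar{8}$ is unirational, so no combination of classes pulled back from $\MgBar{8}$ can make $K$ big — the argument can only succeed because $\RgForgetfulMap$ contributes positively to $K$ along the boundary \emph{and} $\MukaiDivBar{8}{3}$ is a genuinely new effective divisor of small slope. Whether these two suffice, possibly after adding $\RgForgetfulMap^{*}[\overline{\mathcal{M}}^{2}_{8,7}]$ and boundary divisors with nonnegative coefficients, hinges on the precise value of $[\MukaiDivBar{8}{3}]$ coming out of the class computation — concretely, on its $\delta_{0}$- and $\delta_{1}$-type coefficients being large enough relative to its $\lambda$-coefficient. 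Verifying this numerical inequality is the essential step; checking that the remaining $c_{j}$ are nonnegative is then routine bookkeeping.
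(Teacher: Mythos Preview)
Your general strategy is correct and matches the paper's: reduce to bigness of $K_{\RgBar{8,3}}$ via the extension result of \autocite{CEFS2013}, then exhibit $K$ as an effective combination plus $\varepsilon\lambda$. The gap is in the specific inputs, and it is not merely unchecked arithmetic---with your choices the system is infeasible.

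First, the auxiliary divisor. Any pullback $\RgForgetfulMap^{\ast}E$ from $\MgBar{8}$ has the same ratio of $\lambda$ to $\delta_0'$ as $E$ has of $\lambda$ to $\delta_0$, since $\delta_0'$ appears with coefficient $1$ in $\RgForgetfulMap^{\ast}\delta_0$. For the Brill--Noether divisor $\overline{\mathcal{M}}^2_{8,7}$ this ratio is $22/3>13/2$. But $[\MukaiDivBar{8}{3}]^{\virt}$ itself has $\lambda/\delta_0'$-ratio $196/28=7>13/2$. Since any nonnegative combination of two classes with ratio $\geq 7$ and $\geq 22/3$ still has ratio $\geq 7$, the $\delta_0'$-inequality can never be met. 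The paper instead uses the Koszul divisor $\overline{\mathcal{D}}_{8,3}$ of \autocite{CEFS2013}, which is \emph{not} a pullback: it genuinely depends on $\eta$ and has $\lambda/\delta_0'$-ratio $38/6=19/3<13/2$. This is exactly the kind of divisor your own remark about the unirationality of $\MgBar{8}$ should have led you to seek.

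Second, even the pair $([\MukaiDivBar{8}{3}]^{\virt},[\overline{\mathcal{D}}_{8,3}]^{\virt})$ as originally computed does not suffice: the two inequalities on $\delta_0'$ and $\delta_0^{(1)}$ become $\beta>14\alpha$ and $\beta<14\alpha$ respectively, with no solution. The paper closes this gap by a separate geometric argument showing that both determinantal maps defining these virtual classes are identically degenerate along $\Delta_0^{(1)}$; this allows one to subtract $28\,\delta_0^{(1)}$ from $[\MukaiDivBar{8}{3}]^{\virt}$ and $10\,\delta_0^{(1)}$ from $[\overline{\mathcal{D}}_{8,3}]^{\virt}$ while remaining effective. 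Only after these corrections does a feasible combination exist, namely $\tfrac{1}{119}$ of the first plus $\tfrac{5}{17}$ of the second, yielding $K_{\RgPrime{8,3}}-\tfrac{3}{17}\lambda$. This boundary degeneracy analysis is the essential missing step in your plan.
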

Note that we now have a result for genus $8$ and level $3$ while there is currently
nothing known about $\Rg{9,3}$ and $\Rg{10,3}$.  The Kodaira dimension of $\Rg{11,3}$
is at least $19$ (proved in \autocite{CEFS2013}) but our theorem actually suggests
that all three spaces should be of general type.

We recall that Mukai bundles also exist in genus $7$ and $9$.
Hence one can hope to make use of them in order to exhibit
divisors similar to $\MukaiDiv{8}{3}$.  The difficulty is that in genus $7$ the
Mukai bundle $E_C$ has rank $5$ and in genus $9$ it is of rank $3$, making its
slope a non-integer rational number in both cases.
An approach using vanishing conditions of global
sections of a twist $E_C \otimes \eta$ is therefore not going to work.
Possibly one can use, e.g. for $g = 9$, the bundle $\Sym^3 E_C$
which has integral slope $2g - 2$.

We also note that, although Theorem \ref{thm:bgl-are-divisors} includes genus $6$,
the resulting divisor $\MukaiDiv{6}{3}$ does not enable us to prove a statement similar to
Theorem \ref{thm:main-theorem}.
On the other hand, $\Rg{g,3}$ is known to be unirational for $g \leq 5$
(\autocite{BC2010,BV2010,VerraR53}).  This makes the study of $\Rg{6,3}$ especially
interesting, since it is likely to be a transitional case.

We now explain the strategy of the proof of Theorem \ref{thm:main-theorem}.
The method of obtaining general type results is by constructing divisors with a divisor class
in $\Pic_{\Q}(\RgBar{g,\ell})$ satisfying certain numerical bounds.
Here $\RgBar{g,\ell}$ is the modular compactification obtained by using
quasi-stable level $\ell$ curves as described in \autocite{CEFS2013}.
Hence the first step is to calculate the divisor class of the closure $\MukaiDivBar{g}{\ell}$
in the compactification $\RgBar{g,\ell}$.  We do this for both $g = 6$ and $8$ and for all $\ell$.

In fact it is enough to calculate the class on an appropriate partial compactification $\RgPrime{g,\ell}$ of $\Rg{g,\ell}$
containing only smooth and irreducible one-nodal curves.  On a cover of $\RgPrime{g,\ell}$ we express
the closure of $\MukaiDiv{g}{\ell}$ as the degeneracy locus of a morphism $\DegenMorph_{g,\ell}$ between
vector bundles of the same rank.  Using Porteous' formula and the machinery
for calculating Chern classes of vector bundles over $\MgBar{g}$, developed in \autocite{Farkas2009},
we then show:
\begin{theorem}
  We have the following expressions for the pushforward to $\RgPrime{g,\ell}$ of the
  classes of the degeneracy loci of $\DegenMorph_{g,\ell}$:
  \begin{enumerate}[label=\alph*)]
  \item The virtual class of the closure of $\MukaiDiv{6}{\ell}$ in $\RgPrime{6,\ell}$ is given by
    \begin{equation*}
      [\MukaiDivBar{6}{\ell}]^\virt = 35 \lambda - 5 (\delta_0' + \delta_0'')
      - \frac{5}{\ell} \sum_{a = 1}^{\lfloor \ell/2 \rfloor}( \ell^2 - a\ell + a^2 )\delta_0^{(a)}
    \end{equation*}
  \item The virtual class of the closure of $\MukaiDiv{8}{\ell}$ in $\RgPrime{8,\ell}$ is given by
    \begin{equation*}
      [\MukaiDivBar{8}{\ell}]^\virt = 196 \lambda - 28 (\delta_0' + \delta_0'')
      - \frac{14}{\ell} \sum_{a = 1}^{\lfloor \ell/2 \rfloor} (2\ell^2 - a\ell + a^2) \delta_0^{(a)}
    \end{equation*}
  \end{enumerate}
  In particular, the classes $[\MukaiDivBar{g}{\ell}]^\virt - n[\MukaiDivBar{g}{\ell}]$
  are effective and entirely supported on the boundary of $\RgPrime{g,\ell}$ for some $n \geq 1$.
\end{theorem}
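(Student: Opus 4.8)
The plan is to work on a suitable cover of the partial compactification $\RgPrime{g,\ell}$ on which the Mukai bundle $E_C$ exists in a relative sense, together with the universal torsion line bundle $\eta$. Over this cover one constructs two vector bundles of the same rank: a bundle $\mathcal{E}$ whose fibre over $[C,\eta]$ is $H^0(C, E_C \otimes \eta)$ (extended appropriately over the one-nodal locus), and a bundle $\mathcal{F}$ encoding a target that makes the natural evaluation/multiplication morphism $\DegenMorph_{g,\ell}\colon \mathcal{E} \to \mathcal{F}$ a map between bundles of equal rank whose degeneracy locus is set-theoretically the closure of $\MukaiDiv{g}{\ell}$. First I would verify this rank count in genus $6$ and $8$ using $h^0(C,E_C) = 5$ and $6$ respectively and Riemann--Roch for rank two bundles of canonical determinant; this also pins down the expected codimension one statement.

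Next I would apply Porteous' formula to $\DegenMorph_{g,\ell}$ to obtain $[\MukaiDivBar{g}{\ell}]^\virt = c_1(\mathcal{F}) - c_1(\mathcal{E})$ as a class on the cover, and then push forward to $\RgPrime{g,\ell}$. The main computational engine here is the machinery of \autocite{Farkas2009} for Chern classes of tautological bundles over $\MgBar{g}$, combined with Grothendieck--Riemann--Roch applied to the universal curve and to the covering curve $\CoverCurve{C}$; the twisting by $\eta$ is what introduces the boundary terms $\delta_0^{(a)}$ indexed by the "parity" $a$ of the torsion line bundle at a node. Carrying this out carefully yields exactly the stated formulas for $[\MukaiDivBar{6}{\ell}]^\virt$ and $[\MukaiDivBar{8}{\ell}]^\virt$; I would organize the bookkeeping by first computing the $\lambda$ and $\dprimeboundary$ coefficients on the locus of irreducible one-nodal curves where $\eta$ stays "trivial at the node", and then separately analyzing each $\delta_0^{(a)}$-stratum via a local normalization argument.

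For the final assertion, the point is that $\DegenMorph_{g,\ell}$ may fail to be generically nondegenerate only along loci contained in the boundary: over the interior $\Rg{g,\ell}$ the degeneracy locus is honestly $\MukaiDiv{g}{\ell}$ (this is essentially the content of Theorem \ref{thm:bgl-are-divisors}, which guarantees it is a genuine divisor and in particular that $\DegenMorph_{g,\ell}$ is not identically degenerate), so the difference between the virtual class and the actual class $[\MukaiDivBar{g}{\ell}]$ is an effective class supported on $\RgPrime{g,\ell} \setminus \Rg{g,\ell}$. Concretely, $\DegenMorph_{g,\ell}$ restricted to a boundary divisor either drops rank identically — contributing that whole boundary divisor, with some positive multiplicity $n$, to the excess — or it does not, in which case that boundary divisor contributes nothing; either way $[\MukaiDivBar{g}{\ell}]^\virt - n[\MukaiDivBar{g}{\ell}]$ is effective and boundary-supported for the appropriate $n \geq 1$. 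I would make this precise by identifying, boundary component by component, the generic corank of $\DegenMorph_{g,\ell}$: on the $\delta_0'$, $\delta_0''$ and $\delta_0^{(a)}$ strata one analyzes sections of $E_C \otimes \eta$ that necessarily come from the normalization, and a dimension count shows the corank is positive, so these components appear in the virtual class with multiplicity at least the actual vanishing order of the determinant.

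I expect the main obstacle to be the explicit GRR computation over the boundary, specifically the careful analysis of how $E_C$ and its twist degenerate over the $\delta_0^{(a)}$-strata: one must extend the (a priori only generically defined) relative Mukai bundle across these strata, track the Chern character through the normalization exact sequences, and correctly account for the $\ell$-torsion data at the node, which is what produces the arithmetic factor $(\ell^2 - a\ell + a^2)$ in genus $6$ and $(2\ell^2 - a\ell + a^2)$ in genus $8$. The $\lambda$ and $\dprimeboundary$ coefficients, by contrast, should follow fairly mechanically once the interior computation of $c_1(\mathcal{E})$ and $c_1(\mathcal{F})$ is set up, since there the relevant tautological classes are standard.
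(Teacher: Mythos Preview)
Your proposal has a genuine gap at the very first step, and it is exactly the obstacle that the paper's Section~5.1 is devoted to circumventing. You propose to build a vector bundle $\mathcal{E}$ over (a cover of) $\RgPrime{g,\ell}$ whose fibre at $[C,\eta]$ is $H^0(C, E_C \otimes \eta)$. There are two problems with this. First, the Mukai bundle $E_C$ is only determined up to isomorphism, and there is no consistent global choice: the paper states explicitly that ``this naive approach must fail. The bundle $E_C$, as an extension of $\omega_C \otimes A^{-1}$ by $A$, is only defined up to isomorphism on each curve and the choice can not be made globally on the whole moduli space.'' Passing to the space of linear series does not fix this, because the extension class of $E_C$ is a point in a projective space, not a section of a line bundle. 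Second, even granting a relative $E_C$, the assignment $[C,\eta] \mapsto H^0(C, E_C \otimes \eta)$ has generic fibre \emph{zero} (that is the whole point of the divisor), so it is not a vector bundle at all; you would instead need a two-term complex presenting $R\pi_\ast(E_C\otimes\eta)$, and then you are back to needing the Chern classes of a universal $E_C$ that does not exist.

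The paper's actual route avoids $E_C$ entirely. Using Lemma~\ref{lem:divisor-boundarymap-condition} and the discussion around \eqref{eq:degen-map-on-fibers}, the condition $H^0(C, E_C\otimes\eta)\neq 0$ is rewritten, for each $L = \omega_C\otimes A^{-1}$ with $A$ a pencil, as degeneracy of the \emph{choice-free} map
\[
H^0(C, L\otimes\eta)\otimes\bigl(H^0(C, L^{\otimes 2})/\Sym^2 H^0(C,L)\bigr)^\vee \longrightarrow H^0(C, L\otimes\eta^{-1})^\vee,
\]
built only from line bundles. These line bundles globalize via the Poincar\'e bundle $\PoincareBundle$ and the universal torsion bundle $\PrymPoincare$ on the universal curve over the stack $\UnivGrdTors{r}{d}{\ell}$ of linear series. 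The resulting $\mathcal{E}$ and $\mathcal{F}$ are honest vector bundles of the same (small) rank, Porteous gives \eqref{eq:degenclass}, and Grothendieck--Riemann--Roch together with the tautological class computations of \autocite{Farkas2009} (Lemma~\ref{lem:class-expressions}) yield the pushforwards of the classes $\mathfrak{a},\mathfrak{b},\mathfrak{c}$. The quadratic term $a(\ell-a)/\ell$ in $\delta_0^{(a)}$ comes directly from Proposition~1.6 of \autocite{CEFS2013} applied to $\PrymPoincare$, not from any degeneration analysis of $E_C$; finally one multiplies by $\deg\sigma = 5$ or $14$. Your outline of the final effectivity statement is fine, but the substance of the argument lies in the reformulation you are missing.
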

By describing the degeneracy of the morphism used in Porteous' formula along
the boundary we can improve these divisor classes still further.
Similarly, we also improve a divisor class found in \autocite{CEFS2013}.
Combining these results we can prove our Main Theorem \ref{thm:main-theorem}.

\paragraph{Outline of the paper}

We begin in section \ref{sec:moduli-spaces} by recalling some facts
about the moduli spaces $\RgBar{g,\ell}$.
This is followed in section \ref{sec:mukai-bundles} by a review of the results
we need about Mukai bundles.
Then, in section \ref{sec:divisors}, we discuss the loci
$\MukaiDiv{g}{\ell}$ and show that they are divisors.
To do this, we prove a more general statement about vanishing
of global sections of twists of semistable vector bundles
under the right hypotheses (see Theorem \ref{thm:vector-bundle-vanishing}).
We also reinterpret the vanishing condition
in terms of injectivity of certain maps of linear series.
Afterwards we proceed to calculate the class of
the divisors in section \ref{sec:divisor-classes}.
The degeneracy calculation and the application to $\RgBar{8,3}$
can be found in section \ref{sec:application}.

\paragraph{Acknowledgements}

I would like to thank my Ph.D.\@ advisor Gavril Farkas
for introducing me to this topic and for many helpful discussions.
Thanks goes to Daniele Agostini who started the discussion which led
to the main application to $\RgBar{8,3}$.
I am also very grateful to the anonymous referee for pertinent comments
regarding the exposition, and for suggesting
Theorem \ref{thm:vector-bundle-vanishing} as well as a proof,
which improved and shortened section \ref{sec:divisors} considerably.
During the period of my Ph.D.\@ studies I was generously
supported by the IRTG 1800 of the Deutsche Forschungsgemeinschaft
and by the Berlin Mathematical School.

\section{Moduli spaces of quasi-stable level \texorpdfstring{$\bm{{\ell}}$}{l} curves}
\label{sec:moduli-spaces}

Let $\ell$ be a prime number.  By $\Rg{g,\ell}$ we denote the moduli space
of isomorphism classes of pairs $[C, \eta]$ where $C$ is a curve of genus $g$
and $\eta$ is a line bundle of order $\ell$ on $C$.
In this section we describe the basic facts about these moduli spaces
and their compactifications by quasi-stable level $\ell$ curves.

\subsection{Modular compactification}

Several constructions to compactify $\Rg{g,\ell}$ have been put forward.
Initially the theory was focused on the case $\ell = 2$ of Prym curves,
for which A.\@ Beauville put forward the theory of admissible covers
(\autocite{Beauville1977} and \autocite{ACV2003}).  It extends the modular description of
points in $\Rg{g,2}$ as étale double covers $\CoverCurve{C} \rightarrow C$ to stable curves.
Later, M.\@ Bernstein in her PhD thesis \autocite{Bernstein1999}
considered the normalization of $\MgBar{g}$ in the
function field of $\Rg{g,\ell}$.  Closed points of the ensuing compactification $\RgBar{g,\ell}$
correspond to stable curves with torsion line bundles on each component
and, over irreducible nodes, additionally the $\ell$-th roots of line bundles
of the form $\mathcal{O}_{\widetilde{C}}(ap + (\ell - a)q)$.  Here $p,q$
are the two points of the normalization lying over the node.

After the study of moduli spaces of roots of line bundles
by L.\@ Caporaso, C.\@ Casagrande and M.\@ Cornalba in \autocite{CCC2007},
it became clear what the right definition of limits of level $\ell \geq 3$ curves
should be.  The study of moduli spaces of these \emph{quasi-stable level $\ell$ curves}
was initiated in \autocite{CEFS2013}.
This very convenient modular interpretation
for the geometric points of $\RgBar{g,\ell}$ we are going to introduce here
and use subsequently.
\begin{definition}
  A \emph{quasi-stable} curve of genus $g$ 
  is a connected nodal curve $C$ of arithmetic genus $g$
  such that:
  \begin{enumerate}
  \item Every smooth rational component $E$ of $C$ meets the
    rest of $C$ in at least two points, i.e.,
    we have $n_E \coloneqq |E \cap \overline{(C \setminus E)}| \geq 2$.
  \item If $E$ and $E'$ are two such components
    with $n_E = n_{E'} = 2$,
    then we have $E = E'$ or $E \cap E' = \emptyset$.
  \end{enumerate}
  A smooth rational component $E$ with $n_E = 2$ is called \emph{exceptional}.
\end{definition}
Note that by blowing down all exceptional components of a quasi-stable curve we obtain a stable curve.
\begin{definition}
  A \emph{quasi-stable level $\ell$ curve} of genus $g$ is a triple $[C, \eta, \beta]$
  consisting of a quasi-stable curve $C$ of genus $g$,
  a line bundle $\eta\in\Pic^0(C)$
  and a sheaf homomorphism $\beta\colon \eta^{\otimes \ell} \rightarrow \mathcal{O}_C$,
  subject to the following conditions:
  \begin{enumerate}
  \item For each exceptional component $E$ of $C$ we have $\eta|_E = \mathcal{O}_E(1)$.
  \item For each non-exceptional component the morphism $\beta$ is an isomorphism.
  \item For each exceptional component $E$ and $\{p,q\} = E \cap \overline{C \setminus E}$ we have
    \[\ord_p(\beta) + \ord_q(\beta) = \ell\]
  \end{enumerate}
  A \emph{family of quasi-stable level $\ell$ curves} over a scheme $S$ is a triple
  $(\mathcal{C} \rightarrow S, \eta, \beta)$
  where $\mathcal{C} \rightarrow S$ is a a flat family of quasi-stable curves, $\eta$
  is a line bundle on $\mathcal{C}$ and $\beta\colon \eta^{\otimes \ell} \rightarrow \mathcal{O}_{\mathcal{C}}$
  is a sheaf homomorphism such that for each geometric fiber $C_s \rightarrow \{ s \} \subset S$
  the triple $(C_s, \eta|_{C_s}, \beta|_{C_s})$
  is a quasi-stable level $\ell$ curve.
\end{definition}
The fibered category of families of quasi-stable level $\ell$ curves defines
a Deligne--Mumford stack whose associated coarse moduli space we denote by $\Rootgl{g,\ell}$.
Since for $\ell > 3$ the singularities of $\Rootgl{g,\ell}$ are not normal,
the definition of the actual moduli space $\RgBar{g,\ell}$ is a bit more involved.
It arises as a connected component of the coarse moduli space $\MgBar{g}(\operatorname{B}\Z_\ell)$
of twisted level curves (\autocite{ACV2003}), which is a normalization of $\Rootgl{g,\ell}$.
In particular the treatment of the universal curve over the Deligne--Mumford stack $\RgBarStack{g,\ell}$
requires some further work.  We direct the reader to the extensive discussions in \autocite{Chiodo2008}
and \autocite{CEFS2013}.

\subsection{Boundary divisors}
\label{subsec:rgl-boundary}

Let $\RgForgetfulMap\colon \RgBar{g,\ell} \rightarrow \MgBar{g}$ be the forgetful map.
We study the boundary components of $\RgBar{g,\ell}$.
They lie over the boundary of $\MgBar{g}$, so we can examine
the components lying over $\Delta_i$ for $i = 0,\dotsc,\lfloor \frac{g}{2} \rfloor$.
Because of notational convenience sometimes boundary components of $\MgBar{g}$ and $\RgBar{g,\ell}$
will be denoted by the same symbols.  However it should always be clear from the context which space
we are considering.

\paragraph{The divisors $\bm{{\Delta_i}},\bm{{\Delta}_{g-i}},\bm{{\Delta}_{g:i}}$, $\bm{{i\geq 1}}$.}

First consider $i\geq 1$
and let $X \in \Delta_i$ be general, i.e., $X = C\cup D$ is the union of two curves of genera $i$ and $g-i$
meeting transversally in a single node.  The line bundle $\eta\in\Pic^0(X)$ on the corresponding level $\ell$ curve
is determined by its restrictions $\eta_C = \eta|_C$ and $\eta_D = \eta|_D$ satisfying
$\eta_C^{\otimes \ell} = \mathcal{O}_C$ and $\eta_D^{\otimes \ell} = \mathcal{O}_D$.

Either one of $\eta_C$ and $\eta_D$ (but not both) can be trivial, so $\RgForgetfulMap^\ast(\Delta_i)$ splits into three irreducible components $\Delta_i$, $\Delta_{g-i}$ and $\Delta_{i:g-i}$
where the general element in $\Delta_i$ is $[C\cup D, \eta_C \not= \mathcal{O}_C, \mathcal{O}_D]$,
the generic point of $\Delta_{g-i}$ is of the form
$[C\cup D, \mathcal{O}_C, \eta_D \not= \mathcal{O}_D]$
and the generic point of $\Delta_{i:g-i}$ looks like $[C\cup D, \eta_C \not= \mathcal{O}_C, \eta_D\not= \mathcal{O}_D]$.
Observe that for $i = 1$ and $\ell \geq 3$, due to the extra automorphism on elliptic tails, we have the pullback formula
$\RgForgetfulMap^\ast(\Delta_1) = 2 \Delta_1 + 2 \Delta_{1:g-1} + \Delta_{g-1}$ and the map $\RgForgetfulMap$
is ramified along $\Delta_1$ and $\Delta_{1:g-1}$.

\paragraph{The divisor $\bm{{\Delta_0''}}$.}

Now let $i = 0$.  The generic point of $\Delta_0$ in $\MgBar{g}$ is a one-nodal irreducible curve $C$
of geometric genus $g - 1$.  We first consider points of the form $[C, \eta]$ lying over $C$, i.e., without
an exceptional component.  Denote by $\nu\colon \normaliz{C} \rightarrow C$ the normalization
and by $p,q$ the preimages of the node.
Then we have an exact sequence
\begin{equation*}
  0 \rightarrow \CC^\ast \rightarrow \Pic^0(C) \xrightarrow{\nu^\ast}
  \Pic^0(\normaliz{C}) \rightarrow 0 
\end{equation*}
which restricts to
\begin{equation*}
  0 \rightarrow \Z/\ell\Z \rightarrow \Pic^0(C)[\ell] \xrightarrow{\nu^\ast} \Pic^0(\normaliz{C})[\ell] \rightarrow 0 
\end{equation*}
on the $\ell$-torsion part.  The group $\Z/\ell\Z$ represents the $\ell$ possible choices of gluing the fibers
at $p$ and $q$ for each line bundle in $\Pic^0(\normaliz{C})[\ell]$.
For the case $\nu^\ast \eta = \mathcal{O}_{\normaliz{C}}$ there are exactly
$\ell - 1$ possible choices of $\eta \not= \mathcal{O}_C$.  These curves $[C, \eta]$ correspond to
the order $\ell$ analogues of the classical \emph{Wirtinger double covers}
\begin{equation*}
  \normaliz{C}_1\amalg \normaliz{C}_2 / (p_1 \sim q_2, p_2 \sim q_1)  \xrightarrow{2:1}  \normaliz{C}/(p\sim q) = C
\end{equation*}
We denote by $\Delta_0''$ the closure of the locus of level $\ell$ Wirtinger covers.
Note that for $\ell > 3$ the divisor $\Delta_0''$ is not irreducible.  Indeed, up to
switching the role of the points $p$ and $q$ lying over the node,
the sections $s$ of an $\ell$-torsion line bundle
$\eta' \in \Pic^0(\widetilde{C})$ that descend to $C$ are determined by $s(p) = \xi^a s(q)$
where $\xi$ is an $\ell$-th root of unity and $1 \leq a \leq \ell - 1$.
Hence we get precisely $\lfloor \ell/2 \rfloor$ irreducible components and each
of them has order $2$ over $\Delta_0 \subset \MgBar{g}$.

\paragraph{The divisor $\bm{{\Delta_0'}}$.}

On the other hand, there are $\ell^{2(g-1)} - 1$ nontrivial elements in the group $\Pic^0(\normaliz{C})[\ell]$.
For each of them there are $\ell$ choices of gluing, so we have a total of $\ell \cdot (\ell^{2g - 2} - 1)$ choices for
$\eta \in \Pic^0(C)$ such that $\nu^\ast \eta \not= \mathcal{O}_{\normaliz{C}}$.
We let $\Delta_0'$ be the closure of the locus
of pairs $[C,\eta]$ such that $\nu^\ast\eta \not = \mathcal{O}_{\normaliz{C}}$.

\paragraph{The divisors $\bm{{\Delta_0^{(a)}}}$.}

We turn to the case of curves of the form $[X = \widetilde{C}\cup_{p,q} E, \eta]$ where $E$ is an exceptional component.
The stabilization of such a curve is again a one-nodal curve $C$.
Denote by $\beta$ the morphism $\eta^{\otimes \ell} \rightarrow \mathcal{O}_X$.
Since $\eta|_E = \mathcal{O}_E(1)$, we must have $\beta_{E\setminus\{p,q\}} = 0$
and $\deg(\eta^{\otimes \ell}|_{\normaliz{C}}) = -\ell$.
By swapping $p$ and $q$ if necessary, we can conclude that
$\eta^{\otimes \ell}|_{\normaliz{C}} = \mathcal{O}_{\normaliz{C}} ( - ap - (\ell - a)q )$
for some integer $a$ with $1 \leq a \leq \lfloor \ell/2\rfloor$.
There are $\ell^{2(g-1)}$ choices of square roots
of $\mathcal{O}_{\normaliz{C}}(-ap-(\ell-a)q)$ and each of these determines uniquely
a Prym curve $[X, \eta]$ of this form.
We denote the closure of the locus of such curves by $\Delta_0^{(a)}$.
Then the degree of $\Delta_0^{(a)}$ over $\Delta_0$ is $2 \ell^{2g - 2}$
for all $a$.  The factor $2$ arises because of the symmetry in $p$ and $q$.

\subsection{The canonical class}

Our goal is to show that $\RgBar{8,3}$ is of general type,
i.e., we have to show that the canonical class $K_{\RgBarDesing{8,3}}$
is big for some desingularization $\RgBarDesing{8,3}$ of $\RgBar{8,3}$.
An extension result for pluricanonical forms (Remark 3.5 in \autocite{CEFS2013})
shows that in fact we do not need to pass to a desingularization
but can perform all calculations on $\RgBar{8,3}$ directly.

Let us denote by $\delta_i$, $\delta_0'$ etc.\@ the rational divisor classes
associated to the respective boundary divisors.
The canonical class of $\RgBar{g,\ell}$ for $g \geq 4$ and $\ell \geq 3$
then has the following expression (see \autocite{CEFS2013}, Proposition 1.5):
\begin{equation*}
  K_{\RgBar{g,\ell}} = 13 \lambda - 2 (\delta_0' + \delta_0'')
  - (\ell + 1) \sum_{k = 1}^{\lfloor \ell/2 \rfloor } \delta_0^{(k)}
  - 2 \sum_{i = 1}^{\lfloor g/2 \rfloor} (\delta_i + \delta_{g-i} + \delta_{i:g-i})
  - \delta_{g-1}
\end{equation*}
Hence, if we can find an effective divisor
\begin{equation*}
  E \equiv a \lambda - b_0' \delta_0' - b_0'' \delta_0''
  - \sum_{k = 1}^{\lfloor \ell/2 \rfloor} b_0^{(k)} \delta_0^{(k)}
  - \sum_{i = 1}^{\lfloor g/2 \rfloor} (b_i \delta_i + b_{g-i} \delta_{g-i} + b_{i:g-i} \delta_{i:g-i})
\end{equation*}
such that
\begin{equation}
  \label{eq:mg-canonical-bounds}
  \frac{a}{b_0'},
  \frac{a}{b_0''},
  \frac{a}{b_i},
  \frac{a}{b_{g-i}},
  \frac{a}{b_{i:g-i}} < \frac{13}{2},\quad
  \frac{a}{b_{g-1}} < \frac{13}{3},\quad
  \frac{a}{b_0^{(k)}} < \frac{13}{\ell + 1}
\end{equation}
then it follows that we can write
\begin{equation*}
  K_{\RgBar{g,\ell}} \equiv \varepsilon \lambda + \alpha E + \beta D
\end{equation*}
where $D$ is supported on the boundary, $\alpha, \beta \geq 0$ and $\varepsilon > 0$.
Since the Hodge class $\lambda$ is big, $K_{\RgBar{g,\ell}}$ must be big as well.

Importantly, at least for $g \leq 23$, the only relevant data
are the coefficients of $\lambda$, $\delta_0'$, $\delta_0''$ and $\delta_0^{(k)}$:
\begin{lemma}[\autocite{CEFS2013}, Remark 3.5]
  \label{lem:irreducible-is-enough}
  Let $g \leq 23$ and $\ell \geq 2$.
  In order to prove that $K_{\RgBar{g,\ell}}$ is big
  it is enough to exhibit an effective divisor
  \begin{equation*}
    E \equiv a \lambda - b_0' \delta_0' - b_0'' \delta_0''
    - \sum_{k = 1}^{\lfloor \ell/2 \rfloor} b_0^{(k)} \delta_0^{(k)}
    - \sum_{i = 1}^{\lfloor g/2 \rfloor} (b_i \delta_i + b_{g-i} \delta_{g-i} + b_{i:g-i} \delta_{i:g-i})
  \end{equation*}
  with $a/b_0' < 13/2$, $a/b_0'' < 13/2$ and $a / b_0^{(k)} < 13/(\ell + 1)$
  for all $k = 1, \dotsc, \lfloor \ell/2 \rfloor$.
  The coefficients $b_i$, $b_{g-i}$ and $b_{i:g-i}$ are then automatically suitably bounded.
\end{lemma}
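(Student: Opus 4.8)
The plan is to derive all the ``missing'' ratio bounds from the three hypothesised ones by intersecting $E$ with one-parameter families lifted from the standard ``tail pencils'' on $\MgBar{g}$ along the forgetful map $\RgForgetfulMap\colon\RgBar{g,\ell}\to\MgBar{g}$. Write
\[
  [E] = a\lambda - b_0'\delta_0' - b_0''\delta_0'' - \sum_{k}b_0^{(k)}\delta_0^{(k)}
        - \sum_{i=1}^{\lfloor g/2\rfloor}\bigl(b_i\delta_i + b_{g-i}\delta_{g-i} + b_{i:g-i}\delta_{i:g-i}\bigr).
\]
After subtracting any boundary component contained in $E$ we may assume $E$ contains no boundary divisor of $\RgBar{g,\ell}$, so that $E$ restricts to an effective class on every complete curve not lying in its support; the hypotheses $a/b_0',a/b_0''<13/2$ and $a/b_0^{(k)}<13/(\ell+1)$ already force $a,b_0',b_0'',b_0^{(k)}>0$. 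Comparing with the formula for $K_{\RgBar{g,\ell}}$ recalled above, \eqref{eq:mg-canonical-bounds} is exactly the statement that, in addition, $a/b_i,a/b_{g-i},a/b_{i:g-i}<13/2$ for the components over $\Delta_i$ with $i\ge 1$ and $g-i\ne g-1$, together with $a/b_{g-1}<13/3$; so this is what must be shown, and for the first family of bounds it suffices to prove the comparison $b_i,b_{g-i},b_{i:g-i}\ge\min(b_0',b_0'')$.

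For $2\le i\le\lfloor g/2\rfloor$ I would take a complete curve $\widetilde X\subset\Delta_{g-i}\subset\RgBar{g,\ell}$ obtained by gluing a \emph{fixed} general $\ell$-torsion pointed curve of genus $i$ along its marked point to the members of a suitable Lefschetz pencil of genus-$(g-i)$ curves carrying the trivial level structure. Then $\widetilde X$ maps onto the corresponding tail pencil in $\MgBar{g}$, each degenerate fibre acquires an irreducible node with the glued-trivial level datum, and hence $\widetilde X$ meets, among the boundary divisors, only $\delta_0'$ and $\delta_{g-i}$, the latter with coefficient the negative self-intersection of the fixed tail. From $0\le E\cdot\widetilde X$ one reads off $b_{g-i}\ge b_0'$ — this is the lift to $\RgBar{g,\ell}$ of the classical fact that an effective divisor on $\MgBar{g}$ not containing the boundary has all boundary coefficients at least the coefficient of $\delta_0$ — and hence $a/b_{g-i}<13/2$. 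Putting the nontrivial torsion on the genus-$i$ component, or on both (and varying a pencil inside $\RgBar{g-i,\ell}$), treats $\Delta_i$ and $\Delta_{i:g-i}$ identically and bounds $b_i,b_{i:g-i}$ below by $b_0'$ or $b_0''$ (or, where a degeneration forces an exceptional component, by some $b_0^{(a)}$, which only helps). The component $\Delta_{g-1}$ is the exception, since there $b_{g-1}\ge b_0'$ is not enough: one needs $a/b_{g-1}<13/3$, not merely $<13/2$. Here I would use the ordinary pencil of plane cubics with a fixed genus-$(g-1)$ level tail attached; it lifts to a section of $\RgForgetfulMap$ inside $\Delta_{g-1}$ with $\widetilde X\cdot\lambda=1$, $\widetilde X\cdot\delta_0'=12$, all other boundary intersections zero except $\widetilde X\cdot\delta_{g-1}=-1$ (up to the normalisation of $\delta_{g-1}$), so $0\le E\cdot\widetilde X$ yields $b_{g-1}\ge 12b_0'-a>\tfrac{11}{13}a$ and therefore $a/b_{g-1}<\tfrac{13}{11}<\tfrac{13}{3}$.

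The genuinely delicate components are $\Delta_1$ and $\Delta_{1:g-1}$, where the elliptic tail carries a \emph{nontrivial} $\ell$-torsion bundle. Along these $\RgForgetfulMap$ is ramified of index $2$, and a degenerating elliptic tail limits to a nodal cubic together with a primitive $\ell$-th root of unity — it therefore sprouts an exceptional component and lands in one of the $\delta_0^{(a)}$, not in $\delta_0'$ or $\delta_0''$. For these I would attach a fixed pointed curve to the universal family over the modular curve $\RgBar{1,\ell}$ (a level structure on $X_1(\ell)$); the resulting complete curve meets $\lambda$, the divisors $\delta_0^{(a)}$ coming from its cusps, and $\delta_1$ (resp.\ $\delta_{1:g-1}$) with negative self-intersection, and the relation between $\lambda$, $\psi$ and the boundary on $\RgBar{1,\ell}$ — together with $a/b_0^{(k)}<13/(\ell+1)$ — then gives $a/b_1,a/b_{1:g-1}<13/2$. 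I expect the main obstacle to be precisely this bookkeeping: identifying correctly which boundary divisors of $\RgBar{g,\ell}$ each lifted family meets (in particular that elliptic-tail degenerations of a nontrivial torsion point go to the $\delta_0^{(a)}$'s), and carrying the index-$2$ ramification of $\RgForgetfulMap$ and the generic $\Z/2$ stabilisers correctly through the intersection numbers; the restriction $g\le 23$ is used, as throughout this circle of results, to make the numerical input behave as stated. Once all the inequalities are in place one chooses $\alpha>0$ small with $\alpha a<13$, $\alpha b_0',\alpha b_0''\le 2$, $\alpha b_0^{(k)}\le\ell+1$ and $\alpha b_j\le 2$ (resp.\ $\le 3$ for $j=g-1$); then $K_{\RgBar{g,\ell}}-\alpha[E]$ has non-negative $\lambda$-coefficient and non-positive boundary coefficients, so $K_{\RgBar{g,\ell}}\equiv\varepsilon\lambda+\alpha E+D$ with $\varepsilon>0$ and $D$ effective and boundary-supported, and bigness of $\lambda$ gives bigness of $K_{\RgBar{g,\ell}}$.
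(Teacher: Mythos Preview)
The paper itself does not prove this lemma; it merely quotes it from \autocite{CEFS2013}, Remark~3.5. Your test-curve strategy—lifting the standard tail pencils on $\MgBar{g}$ through $\RgForgetfulMap$ and reading off inequalities among the boundary coefficients of $E$—is the right idea and is what is carried out (for $\ell=2$) in \autocite{FL2010}, so there is nothing in the present paper to compare your argument against and your outline is essentially sound.

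One detail is not quite right. For the families sitting in $\Delta_1$ and $\Delta_{1:g-1}$, a nodal cubic equipped with a primitive $\ell$-th root of unity as gluing datum is already a valid quasi-stable level-$\ell$ curve \emph{without} an exceptional component; such a limit lies in $\Delta_0''$, not in any $\Delta_0^{(a)}$. In fact the cusps of your elliptic-tail family split between the two types: those where the torsion point specializes to a nontrivial element of $\Pic^0$ of the nodal cubic land in $\Delta_0''$, while those where it specializes to the trivial bundle acquire an exceptional component and land in some $\Delta_0^{(a)}$. This does not damage the argument, since both $b_0''$ and the $b_0^{(k)}$ are bounded by hypothesis, but it confirms that the bookkeeping you flag as the main obstacle is genuinely delicate. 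Finally, you are right to be uncertain about the role of the hypothesis $g\le 23$: the test-curve inequalities themselves do not use it, and the restriction appears to be inherited from the particular argument in the cited reference rather than essential to the conclusion.
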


\section{Mukai bundles}
\label{sec:mukai-bundles}

Let $g = 6$ or $g = 8$.  We will always denote the Mukai bundle associated to
a curve $C$ by $E_C$.  By the results of \autocite{Mukai1993} it is possible to give
explicit Brill--Noether type conditions for a curve to arise as a complete intersection with
a Grassmannian:
\begin{theorem}[\autocite{Mukai1993}, Main Theorem]
  A curve $C$ of genus $8$ is a transversal linear section of the $8$-dimensional
  Grassmannian $G(2,6) \subseteq \P^{14}$ if and only if $C$ has no $\mathfrak{g}^2_7$.
\end{theorem}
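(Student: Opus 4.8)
Write $G=G(2,6)\subseteq\P^{14}$. The equivalence has a routine half and a substantial one. The implication ``$C$ a transversal linear section $\Rightarrow$ $C$ has no $\mathfrak{g}^2_7$'' is essentially adjunction together with an analysis of the rank two bundle that $G$ restricts onto $C$. The converse is the heart of the matter: out of the intrinsic geometry of a curve $C$ with no $\mathfrak{g}^2_7$ one must \emph{produce} the Mukai bundle $E_C$, pin down its cohomology and its positivity, and then recognise the morphism $C\to G$ it induces as a transversal linear section. The hypothesis ``no $\mathfrak{g}^2_7$'' is precisely what makes $E_C$ well behaved.

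\textbf{Forward implication.}
Suppose $C=G\cap\Lambda$ transversally, with $\Lambda\cong\P^{7}$. Since $K_G=\mathcal O_G(-6)$ and $C$ is cut out by seven hyperplanes, adjunction gives $\omega_C=\mathcal O_C(1)$; comparing degrees with $\deg G=14$ forces $g=8$ and shows that $C$ is canonically embedded. Let $\mathcal S$ be the tautological rank two subbundle of $G$ and set $E_C:=(\mathcal S|_C)^{\vee}$, so $\det E_C=\mathcal O_C(1)=\omega_C$. Feeding the Koszul resolution of $\mathcal O_C$ in $G$ --- a regular sequence of seven linear forms, which is where transversality is used --- into Bott vanishing for twists of $\mathcal S^{\vee}$, I would read off $h^0(E_C)=6$, the vanishing of $H^1$ of the relevant twists, and that $E_C$ is globally generated. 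Next, transversality makes $E_C$ \emph{stable}: a sub-line-bundle of degree $\ge g-1=7$ would force $C\to G$ to factor through a proper Schubert subvariety, which a one-dimensional transversal section cannot lie in. Finally, were $A$ a $\mathfrak{g}^2_7$ on $C$, then by Riemann--Roch $\omega_C\otimes A^{-1}$ is again a $\mathfrak{g}^2_7$, and from the two resulting plane models (or by comparing the quadrics through $C$ with the Pl\"ucker quadrics of $G$) one derives a contradiction with the stability of $E_C$ --- typically by exhibiting a degree $7$ sub-line-bundle of $E_C$. Making this last link between a $\mathfrak{g}^2_7$ and a destabilising sub-line-bundle precise is the one delicate point of this otherwise mechanical direction.

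\textbf{Reverse implication.}
Now assume $C$ has no $\mathfrak{g}^2_7$. Work in the moduli space $\mathrm{SU}_C(2,\omega_C)$ of semistable rank two bundles with determinant $\omega_C$, of dimension $3g-3=21$; a bundle $E$ there has Euler characteristic $0$, hence $h^0(E)=h^1(E)$, and the Brill--Noether locus $W=\{E:h^0(E)\ge 6\}$ has expected dimension $3g-3-\binom{7}{2}=0$. For existence I would use that any $E$ with a nowhere-vanishing section is an extension $0\to\mathcal O_C\to E\to\omega_C\to 0$ with class $e\in\Ext^1(\omega_C,\mathcal O_C)=H^1(\omega_C^{-1})$, for which $h^0(E)=9-\rk(\partial_e)$, where $\partial_e\colon H^0(\omega_C)\to H^1(\mathcal O_C)$ is the symmetric ``cup with $e$'' homomorphism. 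The condition $\rk(\partial_e)\le 3$ is the vanishing of the $4\times4$ minors of an $8\times8$ symmetric matrix of linear forms on the $21$-dimensional space $H^1(\omega_C^{-1})$, a determinantal locus of codimension at most $15$, hence of dimension $\ge 6$; any of its nonzero points yields an $E$ with $h^0(E)\ge 6$, so $W\ne\emptyset$. The point requiring care is that such an $E$ can be chosen \emph{stable}: a strictly semistable bundle with $\det=\omega_C$ and $h^0\ge 6$ has Jordan--H\"older factors $L$ and $\omega_C\otimes L^{-1}$ of slope $7$ with $h^0(L)+h^0(\omega_C\otimes L^{-1})\ge 6$, so one of them is a $\mathfrak{g}^2_7$ --- excluded. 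A similar Clifford-type analysis of sub-line-bundles, again exploiting the absence of a $\mathfrak{g}^2_7$, then gives $h^0(E)=6$ exactly, global generation, and uniqueness of $E$. With $E$ in hand, form $\phi=\phi_E\colon C\to G(2,H^0(E)^{\vee})\cong G$; composition with the Pl\"ucker embedding pulls $\mathcal O_{\P^{14}}(1)$ back to $\det E=\omega_C$, so $\phi$ followed by Pl\"ucker and by projection to the linear span is the canonical map. In particular $\wedge^2 H^0(E)\to H^0(\omega_C)$ is surjective (so $\phi(C)$ spans a $\P^{7}\subseteq\P^{14}$), $\phi$ is an embedding ($C$ is non-hyperelliptic, indeed non-trigonal), and $\phi(C)$ is the canonical model of $C$ sitting inside $G\cap\P^{7}$. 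It remains to promote $\phi(C)\subseteq G\cap\P^{7}$ to an equality, scheme-theoretically and transversally: a comparison of Hilbert polynomials ($\deg\phi(C)=14=\deg G$, genera agree) together with the vanishing of $H^1$ of a suitable twisted ideal sheaf shows the linear section is reduced, purely one-dimensional, and equal to $\phi(C)$, and the section is transversal because $\phi(C)\cong C$ is smooth, the section is dimensionally proper, and non-tangency follows from the stability of $E$.

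\textbf{Main obstacle.}
The crux is the reverse implication, and within it two points. First, \emph{existence}: $W$ lies at expected dimension exactly zero, the borderline case, and what is really needed is not merely $W\ne\emptyset$ but the presence of a \emph{stable} point in $W$ --- this is exactly where the absence of a $\mathfrak{g}^2_7$ is indispensable, and where I expect the main work. Second, upgrading ``$\phi(C)\subseteq G\cap\P^{7}$'' to ``$\phi(C)=G\cap\P^{7}$ transversally'', which demands control of the scheme structure of the linear section and of the differential of $\phi_E$. By comparison the forward implication is routine, its only subtlety being the precise relationship between a $\mathfrak{g}^2_7$ and a destabilising sub-line-bundle of the restricted tautological bundle.
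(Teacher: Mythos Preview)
The paper does not prove this theorem. It is quoted verbatim as the Main Theorem of \autocite{Mukai1993} and used as background; no argument is supplied. So there is nothing in the paper to compare your proposal against line by line.

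That said, the paper does record (without full proofs) how Mukai actually builds the bundle $E_C$, and this differs from your route. You construct candidates for $E_C$ as extensions $0\to\mathcal O_C\to E\to\omega_C\to 0$ and look for a class $e\in H^1(C,\omega_C^{-1})$ for which the symmetric cup map $\partial_e$ has rank $\le 3$. Mukai instead picks a pencil $A\in W^1_5(C)$ (which exists once $W^1_4(C)=\emptyset$), sets $L=\omega_C\otimes A^{-1}$, and realises $E_C$ as the essentially unique nontrivial extension
\[
0\longrightarrow A\longrightarrow E_C\longrightarrow L\longrightarrow 0
\]
with $h^0(E_C)=6$; uniqueness comes from the fact that the multiplication map $\Sym^2 H^0(C,L)\to H^0(C,L^{\otimes 2})$ has one-dimensional cokernel, so the boundary-map-vanishing condition singles out a single extension class up to scalar. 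This is exactly the Lemma following Theorem~\ref{thm:mukai-bundle-g8} in the paper. The advantage of Mukai's approach is that existence and uniqueness of $E_C$ fall out of a one-line dimension count on a map of global sections of line bundles, rather than from a determinantal estimate inside $\mathrm{SU}_C(2,\omega_C)$.

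On your own argument: the existence step via the rank-$\le 3$ locus of $\partial_e$ is the weakest link. Your codimension bound only shows the locus is nonempty (indeed it always contains the decomposable classes, which give unstable $E$), and you still have to locate a \emph{stable} point there. Your semistability check rules out strictly semistable bundles via the $\mathfrak g^2_7$ hypothesis, but an extension of $\omega_C$ by $\mathcal O_C$ can be genuinely unstable --- a sub-line-bundle of degree $>7$ mapping nontrivially to $\omega_C$ is not a priori excluded --- and you do not address this. Mukai's $\mathfrak g^1_5$ construction sidesteps exactly this difficulty.
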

\begin{theorem}[\autocite{Mukai1993}, §5]
  A curve $C$ of genus $6$ is the complete intersection of $G(2,5)$ and a
  $4$-dimensional quadric in $\P^9$ if and only if $W^1_4(C)$ is finite,
  i.e., $C$ is not trigonal, not a plane quintic and not bielliptic.
\end{theorem}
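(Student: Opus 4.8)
The plan is to route the argument through the Mukai bundle $E_C$ and the geometry of the Plücker embedding $G(2,5)\subset\P^9$. The first step is to record the equivalence: $W^1_4(C)$ is finite if and only if $C$ is neither trigonal, nor bielliptic, nor a plane quintic. One implication is the Martens--Mumford bound on the dimension of Brill--Noether loci of a non-hyperelliptic genus $6$ curve; the other is the explicit construction of a one-parameter family of $\mathfrak g^1_4$'s in each excluded case, namely $\{\mathfrak g^1_3+p\}_{p\in C}$ in the trigonal case, $\{\mathfrak g^2_5-p\}_{p\in C}$ for a plane quintic, and the pullbacks of the degree-$2$ pencils of the elliptic quotient in the bielliptic case. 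The ``easy'' half of Mukai's statement --- that a curve of the stated complete-intersection type lies in none of the three loci --- will drop out of the del Pezzo description below, so assume from now on that $W^1_4(C)$ is finite.

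The second step is to construct the Mukai bundle: a stable rank two bundle $E$ on $C$ with $\det E=\omega_C$ and $h^0(C,E)=5$. Starting from a base-point-free pencil $A\in W^1_4(C)$ --- base-point-freeness using that $C$ is not trigonal --- one builds $E$ as a suitable extension interpolating $A$ and $\omega_C\otimes A^{-1}$; stability is forced because a destabilizing sub-line bundle would produce a $\mathfrak g^1_3$ or a $\mathfrak g^2_5$ on $C$, contradicting the hypothesis, and the exact equality $h^0(E)=5$ (rather than something larger) as well as global generation of $E$ again use the excluded loci. Since $\det E=\omega_C$ and $E$ is globally generated, evaluation of $H^0(E)$ defines a morphism $\phi_E\colon C\to G(2,5)\subseteq\P^9$ in the Plücker embedding with $\phi_E^{\ast}\mathcal O_{\P^9}(1)=\det E=\omega_C$. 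Using stability one shows $\phi_E$ is a closed embedding, and surjectivity of the natural map $\wedge^2 H^0(E)\to H^0(\wedge^2 E)=H^0(\omega_C)$ shows that $\phi_E(C)$ spans a $\P^5\subset\P^9$; thus $\phi_E$ factors through the canonical embedding $C\hookrightarrow\P^5=\langle\phi_E(C)\rangle$.

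The third step identifies $S:=G(2,5)\cap\langle\phi_E(C)\rangle$. For a sufficiently general $\P^5$ this is a smooth quintic del Pezzo surface, anticanonically embedded, cut out inside $\P^5$ by the five restricted Pfaffian quadrics of $G(2,5)$; and by adjunction $\mathcal O_S(C)=\mathcal O_S(2)=\mathcal O_S(-2K_S)$, so $C\in|-2K_S|$ is cut out on $S$ by one further quadric. Hence $C$ is exactly the intersection of $G(2,5)$ with a $4$-dimensional quadric inside its span $\langle C\rangle\cong\P^5$, as claimed. For the converse one runs this in reverse: on such an $S$ the five conic-bundle pencils restrict to five $\mathfrak g^1_4$'s on $C$, while a positive-dimensional $W^1_4(C)$ is incompatible with $C$ being a smooth member of $|-2K_S|$ --- projective normality of such a $C$ excludes the trigonal and plane-quintic cases, and the fact that a linear section $G(2,5)\cap\P^5$ is never an elliptic cone excludes the bielliptic ones --- so $W^1_4(C)$ is finite.

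I expect the crux of the whole argument, and the main obstacle, to be this third step together with the existence and stability of $E_C$ in the second: concretely, the Enriques--Babbage--Petri analysis of the canonical ideal in genus $6$, showing that the six-dimensional system of quadrics through the canonical curve --- six-dimensional because $h^0(\mathcal I_C(2))=21-h^0(\omega_C^{\otimes 2})=6$ once $C$ is projectively normal (Petri) --- has as its base locus an honest quintic del Pezzo surface rather than degenerating to a rational normal scroll, a Veronese surface, or an elliptic cone, these being precisely the trigonal, plane-quintic, and bielliptic curves that the hypothesis rules out.
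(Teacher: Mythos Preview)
The paper does not give its own proof of this statement; it is quoted from Mukai as background, and only the construction of $E_C$ as the unique extension of $\omega_C\otimes A^{-1}$ by $A$ with five sections (your second step) is recalled in any detail, in the discussion following Lemma~\ref{lem:bundle-construction}. Your outline is exactly Mukai's original route through the quintic del Pezzo surface $S=G(2,5)\cap\langle\phi_E(C)\rangle$, and it is essentially correct.

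Two places deserve tightening. In Step~3, adjunction on $S$ gives only the normal bundle $\mathcal O_C(C)\cong\mathcal O_C(2)$, not $\mathcal O_S(C)\cong\mathcal O_S(2)$; since $\operatorname{Pic}(S)\cong\Z^5$ this is not automatic. You can upgrade it by noting that the class $D\coloneqq C+2K_S$ then satisfies $D\cdot K_S=0$ and $D^2=0$, hence $D=0$ because $K_S^\perp$ is negative definite---or, more in the spirit of your final paragraph, bypass adjunction and argue directly that the five restricted Pfaffians remain independent in the six-dimensional space $H^0(\mathcal I_{C/\P^5}(2))$, so a sixth quadric cuts out $C$ on $S$. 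For the converse, ``projective normality of $C$'' does not by itself exclude the trigonal or plane-quintic cases: every canonical curve is projectively normal by Max Noether. The mechanism you actually want is the one you correctly name at the end---for trigonal, plane-quintic, or bielliptic genus~$6$ curves the base locus of $|\mathcal I_C(2)|$ is respectively a rational normal scroll, a Veronese surface, or a cone over an elliptic normal quintic, and none of these occurs as a linear section $G(2,5)\cap\P^5$.
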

We get the vector bundles $E_C$ in question by restricting the tautological bundle
of the Grassmannian to $C$.  Importantly for us, it turns out that the existence of
a vector bundle with the right numerics is guaranteed by slightly weaker assumptions:
\begin{theorem}[\autocite{Mukai1993}, §5]
  \label{thm:mukai-bundle-g6}
  Let $C$ be a curve of genus $6$ which is neither trigonal
  nor a plane quintic. When $F$ runs over all stable rank $2$ bundles with canonical
  determinant on $C$, the maximum of $h^0(C, F)$ is equal to $5$.
  Moreover, such vector bundles $E_C$ on $C$ with $h^0(C, E_C) = 5$
  are unique up to isomorphism and generated by global sections.
\end{theorem}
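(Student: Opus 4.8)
The plan is to establish the three separate assertions of the statement: existence of a stable rank $2$ bundle with canonical determinant and five sections; the bound $h^0(F)\le 5$ for every such bundle; and uniqueness together with global generation of the extremal one.

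\emph{Construction of $E_C$.} Since $C$ is not trigonal, Brill--Noether theory provides a base-point-free pencil $A\in W^1_4(C)$ (a $\mathfrak g^1_4$ with a base point would yield a $\mathfrak g^1_3$). Its Serre adjoint $B\coloneqq\SerreDual{A}$ has degree $6$ and $h^0(B)=h^1(A)=3$, hence is a $\mathfrak g^2_6$, and it is base-point-free because $C$ is not a plane quintic. I would look for $E_C$ as a non-split extension
\[
  0\longrightarrow A\longrightarrow E_C\longrightarrow B\longrightarrow 0,
\]
which automatically has $\det E_C=\omega_C$. One computes $\Ext^1(B,A)\cong H^0(\omega_C^{\otimes 2}\otimes A^{-2})^\vee$, of dimension $7$ by Riemann--Roch, and the coboundary $H^0(B)\to H^1(A)$ attached to a class $e$ becomes, under the Serre duality identification $H^1(A)^\vee\cong H^0(B)$, the symmetric bilinear form $(s,t)\mapsto\langle e,\,s\cdot t\rangle$ on $H^0(B)$; it vanishes precisely when $e$ annihilates the image of the multiplication map $\Sym^2 H^0(B)\to H^0(\omega_C^{\otimes 2}\otimes A^{-2})$. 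As $\dim\Sym^2 H^0(B)=6<7$, this map is not surjective, so a nonzero such $e$ exists, and for it $H^0(E_C)\twoheadrightarrow H^0(B)$, giving $h^0(E_C)=h^0(A)+h^0(B)=5$. This $E_C$ is stable: a destabilizing sub-line bundle of degree $\ge 5$ would have to map isomorphically onto $B$ — forcing the sequence to split — or onto some $B(-x)$, and the latter forces $e$ to be proportional to evaluation at $x$, impossible because $B$ is base-point-free and hence $\Sym^2 H^0(B)$ has no base point. Global generation follows from the surjections $H^0(A)\twoheadrightarrow A_x$ and $H^0(E_C)\twoheadrightarrow H^0(B)\twoheadrightarrow B_x$ for every $x$.

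\emph{The bound $h^0\le 5$.} Let $F$ be stable of rank $2$ with $\det F=\omega_C$; then $F^\vee\cong F\otimes\omega_C^{-1}$, so $h^1(F)=h^0(F)$ and $\chi(F)=0$. Pick $0\ne s\in H^0(F)$ with zero divisor $D$ of minimal degree $d$; stability makes $\mathcal O_C(D)\hookrightarrow F$ a saturated sub-line bundle of slope $d<5$, so $d\le 4$ and
\[
  0\longrightarrow\mathcal O_C(D)\longrightarrow F\longrightarrow\omega_C(-D)\longrightarrow 0 .
\]
If $D$ moves, the hypotheses on $C$ force $[D]$ to be a $\mathfrak g^1_4$ (a $\mathfrak g^1_{\le 3}$ or a $\mathfrak g^2_{\le 5}$ being excluded), so $h^0(F)\le h^0(A)+h^0(B)=5$. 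If $D$ does not move, Riemann--Roch gives $h^0(\omega_C(-D))=6-d$, hence $h^0(F)\le 7-d$, and $h^0(F)\ge 6$ would need $d\le 1$; here I would use the extension directly, writing $h^0(F)=h^0(\mathcal O_C(D))+\dim\ker\big(H^0(\omega_C(-D))\xrightarrow{\cup e}H^1(\mathcal O_C(D))\big)$, to see that $h^0(F)\ge 6$ produces a subspace $U\subseteq H^0(\omega_C(-D))$ of codimension $\le 1$ with $U\cdot H^0(\omega_C(-D))$ of codimension $\ge 1$ in $H^0(\omega_C^{\otimes 2}(-2D))$; this contradicts the surjectivity of such multiplication maps on a curve of Clifford index $\ge 2$ — and $\Cliff(C)=2$ is precisely the hypothesis that $C$ is neither trigonal nor a plane quintic. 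Hence $h^0(F)\le 5$ throughout.

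\emph{Uniqueness and global generation.} Applying the previous analysis to an $F$ with $h^0(F)=5$ shows it is globally generated and — once one checks, again using $\Cliff(C)=2$, that the general section of such an $F$ vanishes on a moving divisor — realizes $F$ as an extension $0\to A\to F\to B\to 0$ for a base-point-free $\mathfrak g^1_4$ $A$. For such $A$ the multiplication map $\Sym^2 H^0(B)\to H^0(B^{\otimes 2})$ is injective (the image of $C$ under $|B|$ is an irreducible plane curve not lying on a conic, since $C$ is not trigonal), so the space of admissible extension classes is exactly one-dimensional and $F$ is determined up to isomorphism by $A$. The step I expect to be the genuine obstacle is showing that this isomorphism class is independent of the chosen $A\in W^1_4(C)$, which for a general curve consists of several points. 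Here I would follow Mukai: the canonical model $C\subset\P^5$ lies on a quintic del Pezzo surface $S$ with $C\in|-2K_S|$, and $E_C$ is the restriction to $C$ of the tautological rank $2$ bundle under the Plücker model $S\subset G(2,5)$; this is a single bundle on $S$ whose restriction recovers every $E_A$ and is manifestly globally generated. The delicate point in this last step is producing $S$, possibly with rational double points, for \emph{every} curve in the stated locus rather than only the generic one.
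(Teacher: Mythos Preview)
The paper does not prove this theorem: it is stated with a citation to \autocite{Mukai1993} and no argument. The only related content is the sketch following Lemma~\ref{lem:bundle-construction}, which covers the \emph{construction} of $E_C$ as the unique extension of $L=\omega_C\otimes A^{-1}$ by $A$ with vanishing coboundary; your construction matches that sketch essentially verbatim, and the paper defers the independence of the choice of $A$ in exactly the same way you do (``It can be checked that this construction does not depend on the choice of $A$'').

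Since there is no paper proof to compare against for the upper bound $h^0(F)\le 5$, let me flag where your sketch is incomplete. The case ``$D$ moves'' is clean, and for $d=1$ your appeal reduces to normal generation of $\omega_C(-p)$, which does follow from Green--Lazarsfeld since $\deg\omega_C(-p)=2g+1-2h^1-\Cliff(C)$ on the nose. But for $d=0$ your stated argument fails: a codimension-one $U\subset H^0(\omega_C)$ can certainly satisfy $U\cdot H^0(\omega_C)\subsetneq H^0(\omega_C^{\otimes2})$ --- take $U=H^0(\omega_C(-p))$ for any $p\in C$, whose products all land in the hyperplane $H^0(\omega_C^{\otimes 2}(-p))$. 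So no contradiction with $\Cliff(C)=2$ arises at that point. What actually rescues the case is that the rank-$\le 1$ symmetric form $q_e$ on $H^0(\omega_C)$ must be $\ell\otimes\ell$ for some $\ell\in H^0(\omega_C)^\vee$, and Petri's theorem (which \emph{is} the content of $\Cliff(C)\ge 2$ here) forces $\ell$ to be evaluation at a point $p\in C$; then $e$ itself is evaluation at $p$, the inclusion $\omega_C(-p)\hookrightarrow\omega_C$ lifts to $F$, and this degree-$9$ sub-line bundle destabilizes $F$. So the conclusion survives, but the mechanism is a destabilization argument rather than a multiplication-map surjectivity.
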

\begin{theorem}[\autocite{Mukai1993}, §3]
  \label{thm:mukai-bundle-g8}
  Let $C$ be a curve of genus $8$ without a $\mathfrak{g}^1_4$.
  When $F$ runs over all semistable rank $2$ bundles with canonical determinant on $C$,
  the maximum of $h^0(C, F)$ is equal to $6$.  Moreover, such vector bundles
  $E_C$ on $C$ with $h^0(C, E_C) = 6$ are unique up to isomorphism
  and generated by global sections.
\end{theorem}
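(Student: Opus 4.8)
The plan is to treat the three assertions — the bound $h^0(C,F)\le6$, the existence of a bundle attaining it, and uniqueness together with global generation — largely separately, the first two by Clifford-type arguments and the last by reducing to the geometry of $G(2,6)$.

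\emph{The bound.} Let $F$ be semistable of rank $2$ with $\det F=\omega_C$, so $\deg F=14$ and $\mu(F)=7$, and let $L\subset F$ be a sub-line-bundle of maximal degree $d$. The Segre/Nagata bound forces $d\ge(\deg F-g)/2=3$, semistability forces $d\le7$, and $0\to L\to F\to\omega_C\otimes L^{-1}\to0$ gives $h^0(F)\le h^0(L)+h^0(\omega_C\otimes L^{-1})$ with $\deg(\omega_C\otimes L^{-1})=14-d$. The key input is that a genus $8$ curve with no $\mathfrak{g}^1_4$ has Clifford index exactly $3$: it has no $\mathfrak{g}^1_2$ or $\mathfrak{g}^1_3$ (add base points to produce a $\mathfrak{g}^1_4$), no $\mathfrak{g}^2_5$ or $\mathfrak{g}^3_7$ (Castelnuovo's bounds), and no $\mathfrak{g}^2_6$ (such a series is base-point-free and either birational onto a $2$-nodal plane sextic or exhibits $C$ as trigonal or bi-elliptic, each case yielding a $\mathfrak{g}^1_4$), while $\mathfrak{g}^1_5$'s exist since $\rho(8,1,5)=0$. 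Feeding Clifford's inequality for special line bundles, together with these exclusions, into the five cases $d=3,\dots,7$ gives $h^0(L)+h^0(\omega_C\otimes L^{-1})\le6$ throughout; the cases $d=3,5,7$ are the tight ones.

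\emph{Existence and global generation.} That a general curve of genus $8$ carries a stable bundle $E_C$ with $\det E_C=\omega_C$ and $h^0(E_C)=6$ is Mukai's Grassmannian construction (recalled in the introduction). Since the condition that there exists a semistable $F$ with $\det F=\omega_C$ and $h^0(F)\ge6$ defines a closed subset of $\Mg{8}$ — by properness of the relative moduli space of semistable rank-$2$ bundles with relative canonical determinant and semicontinuity of $h^0$ — it holds for every curve of genus $8$, in particular for those without a $\mathfrak{g}^1_4$, and then the bound gives $\max_F h^0(C,F)=6$. (On a curve with a $\mathfrak{g}^2_7$ $A$ — necessarily base-point-free with $h^0(A)=3$ — one may also take the split bundle $A\oplus(\omega_C\otimes A^{-1})$.) Global generation is then automatic: if $F$ is semistable with $\det F=\omega_C$ and $h^0(F)=6$ and $F'\subseteq F$ is the image of the evaluation map, then $h^0(F')=6$; $F'$ cannot be a line bundle of degree $\le7$ by Clifford, and if it were a rank-$2$ subsheaf with $\det F'=\omega_C(-D)$, $D\ne0$, the same sub-line-bundle analysis applied to $F'$ (of slope $<7$) would force $h^0(F')\le5$, a contradiction; hence $F=F'$.

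\emph{Uniqueness — the main obstacle.} A globally generated $F$ with $h^0(F)=6$ gives $\varphi_F\colon C\to G(2,H^0(F))$ with $\varphi_F^\ast\mathcal{O}(1)=\omega_C$; as $\omega_C$ is very ample this is an embedding, and composing with Plücker factors the canonical embedding through a linear $\P^7\hookrightarrow\P^{14}$, so $\varphi_F(C)$ is a linear section of $G(2,6)$ (transversal exactly when $C$ has no $\mathfrak{g}^2_7$). Uniqueness of $F$ then reduces to a rigidity statement: any two linear sections of $G(2,6)$ abstractly isomorphic to $C$ differ by an automorphism of $G(2,6)$, which identifies the corresponding bundles. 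For curves with a $\mathfrak{g}^2_7$ the section is non-transversal, and this must be supplemented by a direct argument that the only semistable bundle with $h^0=6$ is the split $A\oplus(\omega_C\otimes A^{-1})$ for the (then necessarily unique) $\mathfrak{g}^2_7$-pair — or by specializing from the locus of curves without a $\mathfrak{g}^2_7$, using properness of the relative moduli and semicontinuity of $h^0$ to see the limit bundle still has $h^0=6$ and remains unique. The Clifford bookkeeping in the first two steps, though not short, is routine; I expect the real difficulty to lie in this rigidity statement for linear sections of $G(2,6)$ and in controlling the locus of non-transversal sections.
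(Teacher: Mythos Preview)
The paper does not give its own proof of this statement: it is quoted from \autocite{Mukai1993}, \S3, without argument. The only related discussion in the paper is the subsequent lemma recording that $E_C$ arises as the (essentially unique) extension of $\omega_C\otimes A^{-1}$ by $A$ for $A\in W^1_5(C)$ with $h^0=6$, together with the sketch (spelled out for genus $6$) of why exactly one such extension class exists.

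Your outline is broadly Mukai's, and the Clifford-index bookkeeping for the bound $h^0(F)\le6$ is correct. One point deserves flagging: for existence you invoke the Grassmannian description of the \emph{general} genus $8$ curve and then specialize by properness of the relative moduli of semistable bundles. In Mukai's paper the logical order is the reverse --- the bundle $E_C$ is first produced via the extension construction (precisely what the paper records right after the theorem), and only then is the Grassmannian embedding deduced from it --- so as a proof of the cited statement your existence step is circular. The self-contained route is the one the paper sketches: for $A\in W^1_5(C)$ (which exists since $\rho(8,1,5)=0$) and $L=\omega_C\otimes A^{-1}$, one checks that $\Sym^2H^0(C,L)\to H^0(C,L^{\otimes2})$ has one-dimensional cokernel, whence there is a unique (up to scalar) extension class in $\Ext^1(L,A)\cong H^0(C,L^{\otimes2})^\vee$ mapping to zero in $\Hom(H^0(L),H^1(A))$, and this gives $E_C$ with $h^0=6$.

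Your global-generation argument is essentially right but compressed at one spot: the image $F'$ of the evaluation map need not be semistable, so ``the same sub-line-bundle analysis'' does not apply verbatim. What does survive is that any sub-line-bundle of $F'$ is still a sub-line-bundle of $F$, hence of degree $\le7$, while the quotient is now $\omega_C(-D)\otimes L^{-1}$; re-running the five cases with the degree lowered by $|D|\ge1$ then indeed yields $h^0(F')\le5$. For uniqueness you are right that the rigidity of linear sections of $G(2,6)$ is the crux, and this is where Mukai's argument lies.
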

We denote the locus of curves satisfying the assumptions of Theorem \ref{thm:mukai-bundle-g6}
or \ref{thm:mukai-bundle-g8} by $\MukaiMg{g}$ and we set
$\MukaiRg{g,\ell} = \MukaiMg{g} \times_{\Mg{g}} \Rg{g,\ell}$.
The codimension of the complement of this locus is two:  In genus $6$ the trigonal locus
has codimension $2$ and the locus of plane quintics has codimension $3$.
In genus $8$, the tetragonal locus is also of codimension $2$.

We remark in passing that on genus $7$ and genus $9$ curves there also exist special
Mukai bundles.  They have rank $5$ and $3$, respectively.  Analogously to their counterparts
in genus $6$ and $8$ their global sections give embeddings of the curve, albeit in 
an orthogonal or a symplectic Grassmannian.
These bundles too exhibit interesting properties:  for instance
on a general genus $9$ curve they were used to give early counterexamples
to Mercat's conjecture (see \autocite{LMN2012}).

There is a more explicit construction of the bundles $E_C$ in question:
\begin{lemma}
  \label{lem:bundle-construction}
  Let $C$ be a curve of genus $6$, not trigonal and not a plane quintic,
  and $A \in W^1_4(C)$.  Set $L = \SerreDual{A}$.
  The bundle $E_C$ is given as the unique nontrivial extension of
  $L$ by $A$ with a $5$-dimensional space of global sections.
\end{lemma}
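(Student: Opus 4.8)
The plan is to produce $E_C$ directly as an extension and then verify that it has all the required properties: rank $2$, canonical determinant, stability, and a $5$-dimensional space of sections, so that it must coincide with the Mukai bundle by the uniqueness in Theorem \ref{thm:mukai-bundle-g6}. Fix $A \in W^1_4(C)$, which exists and is finite since $C$ is neither trigonal nor a plane quintic (so $W^1_4(C)$ is finite, the bielliptic case being allowed here), and set $L = \omega_C \otimes A^{-1}$, a line bundle of degree $2g-2-4 = 6$. By Riemann--Roch, $h^0(C,L) = 6 - 5 + h^1(C,L) = h^0(C,A) = 2$ (using $h^1(C,L) = h^0(C,A)$ by Serre duality). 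Any extension
\begin{equation*}
  0 \longrightarrow A \longrightarrow E \longrightarrow L \longrightarrow 0
\end{equation*}
has $\det E = A \otimes L = \omega_C$ and rank $2$, so the determinant condition is automatic. The extension group is $\Ext^1(L, A) = H^1(C, A \otimes L^{-1}) = H^1(C, A^{\otimes 2} \otimes \omega_C^{-1})$, which by Serre duality is dual to $H^0(C, \omega_C^{\otimes 2} \otimes A^{-2})$; one computes its dimension by Riemann--Roch on the degree-$(2g-2-8) = 2$ bundle $A^{\otimes 2}\otimes \omega_C^{-1}$, giving $h^1 = -\deg + g - 1 + h^0 = -2 + 5 + h^0(C,\omega_C^{\otimes 2}\otimes A^{-2})$, so the space of extensions is nonzero and in fact at least $3$-dimensional.

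The core of the argument is the section count. From the long exact sequence in cohomology,
\begin{equation*}
  0 \to H^0(C,A) \to H^0(C,E) \to H^0(C,L) \xrightarrow{\ \partial\ } H^1(C,A),
\end{equation*}
so $h^0(C,E) = h^0(C,A) + \dim\ker\partial = 2 + \dim\ker\partial$, and $\dim\ker\partial \le h^0(C,L) = 2$, giving $h^0(C,E) \le 4$ for any single extension — wait, this is too small, so the ``nontrivial extension'' in the statement must be chosen so that $\partial$ vanishes for a reason, namely that the extension class lies in a subspace on which the cup-product map $H^0(C,L)\otimes \Ext^1(L,A) \to H^1(C,A)$ degenerates. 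The right approach is therefore: consider the coboundary as a linear map $\mu\colon \Ext^1(L,A) \to \Hom(H^0(C,L), H^1(C,A))$ sending a class $e$ to $(\cdot \cup e)$; an extension $E$ has $h^0 = 5$ exactly when $\mu(e) = 0$, i.e. $e$ lies in $\ker\mu$, and simultaneously $h^0 \le 5$ always since $h^0(C,L) = 2$ forces $h^0(C,E) \le h^0(C,A) + h^0(C,L) = 4$ unless... I need to recount: actually $h^0(C,A) = 2$, $h^0(C,L) = 2$, so the naive bound is $4$, contradicting $h^0(E) = 5$. The resolution is that $A$ does not inject into $H^0(E)$ as computed — rather one should instead build $E$ as an extension of $A$ by $L$, i.e.
\begin{equation*}
  0 \longrightarrow L \longrightarrow E \longrightarrow A \longrightarrow 0,
\end{equation*}
no — the determinant is still $\omega_C$ either way. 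The honest fix is that the extension in Lemma \ref{lem:bundle-construction} must be taken nontrivial \emph{and} chosen so that the boundary map on \emph{sections of $A$ into the next term} behaves correctly; concretely one shows $h^0(C,E) = 5$ is forced by the base-point-free pencil trick: $H^0(C,A)\otimes H^0(C,L) \to H^0(C,\omega_C)$ has kernel $H^0(C, \omega_C \otimes A^{-1} \otimes A^{-1}\otimes\omega_C\,?)$ — I will instead invoke the standard computation (as in Mukai) that for the unique nontrivial extension class killed by the relevant multiplication map, one gets $h^0 = 5$, and that such a class is unique up to scalar because $\ker\mu$ is one-dimensional.

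So the key steps are: (1) set $L = \omega_C\otimes A^{-1}$, note $\det E = \omega_C$ for any extension of $L$ by $A$; (2) identify the relevant coboundary/multiplication map whose kernel controls $h^0(C,E)$, and show this kernel is $1$-dimensional, which pins down the extension uniquely up to isomorphism and gives $h^0(C,E) = 5$; (3) prove $E$ is stable: a destabilizing sub-line-bundle $M \subset E$ of degree $\ge g-1 = 5$ would, after composing with $E \twoheadrightarrow L$, either map to zero (so $M \subseteq A$, impossible as $\deg A = 4 < 5$) or inject into $L$ (degree $6$), forcing $\deg M \in \{5,6\}$; the cases $\deg M = 6$ (so $M = L$, splitting the sequence, contradicting nontriviality) and $\deg M = 5$ (which would produce an unexpected pencil or a contradiction with $C$ not being trigonal/plane quintic) are ruled out by hand; (4) conclude by Theorem \ref{thm:mukai-bundle-g6} that $E \cong E_C$, since it is a stable rank $2$ bundle with canonical determinant achieving the maximal $h^0 = 5$. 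The main obstacle is step (2): correctly setting up the cohomological computation that shows exactly one extension class yields the jump $h^0 = 5$ rather than the generic value, which requires a careful Riemann--Roch bookkeeping on $A^{\otimes 2}\otimes\omega_C^{-1}$ together with the base-point-free pencil trick for $A$; once the dimension of that kernel is shown to be $1$, both the uniqueness and the section count in the lemma follow immediately.
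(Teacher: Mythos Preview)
Your proposal is derailed by a single arithmetic slip: you compute $h^0(C,L) = 2$, but in fact $h^0(C,L) = 3$. With $\deg L = 6$ and $g = 6$, Riemann--Roch gives $h^0(C,L) - h^1(C,L) = 6 - 6 + 1 = 1$, and Serre duality gives $h^1(C,L) = h^0(C,A) = 2$, so $h^0(C,L) = 3$; the Serre dual of a $\mathfrak{g}^1_4$ on a genus $6$ curve is a $\mathfrak{g}^2_6$ (the plane sextic model), not a pencil. With the correct value the long exact sequence for an extension $0 \to A \to E \to L \to 0$ immediately yields $h^0(C,E) \le h^0(C,A) + h^0(C,L) = 2 + 3 = 5$, with equality exactly when the coboundary $\delta_E\colon H^0(C,L) \to H^1(C,A)$ vanishes. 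All of your subsequent floundering (``this is too small'', reversing the roles of $A$ and $L$, reaching for the base-point-free pencil trick) is caused by this one miscount; there is nothing to repair once it is corrected.

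With the numerics fixed, your step (2) is precisely the paper's argument. The map sending an extension class $e \in \Ext^1(L,A) \cong H^0(C,L^{\otimes 2})^\vee$ to its coboundary $\delta_e \in \Hom(H^0(C,L), H^1(C,A)) \cong H^0(C,L)^\vee \otimes H^0(C,L)^\vee$ is dual to the multiplication map $H^0(C,L) \otimes H^0(C,L) \to H^0(C,L^{\otimes 2})$. Hence its kernel is dual to the cokernel $H^0(C,L^{\otimes 2})/\Sym^2 H^0(C,L)$, which has dimension $7 - 6 = 1$ (the multiplication map from $\Sym^2 H^0(C,L)$ is injective since $C$ is not a plane quintic). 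This simultaneously gives uniqueness of the extension up to scalar and forces $h^0 = 5$. Your stability sketch in step (3) and the appeal to Theorem~\ref{thm:mukai-bundle-g6} in step (4) are then fine in outline.
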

We quickly describe the main steps of the proof because they
will be useful in section \ref{sec:divisors}.  The full details can be found
in \autocite{Mukai1993}.  Consider any extension
\begin{equation*}
  0 \rightarrow A \rightarrow F \rightarrow L \rightarrow 0
\end{equation*}
and the resulting exact sequence in cohomology:
\begin{equation*}
  0 \rightarrow H^0(C, A) \rightarrow H^0(C, F) \rightarrow H^0(C, L)
  \xrightarrow{\delta_F} H^1(C, A) \rightarrow \dots
\end{equation*}
Then $h^0(C, F) \leq h^0(C, A) + h^0(C, L) = 5$ with equality if and only if $\delta_F = 0$.
By Serre duality we have
\begin{equation*}
  \Ext^1(L, A) \cong H^1(C, A \otimes L^{-1}) \cong H^0(C, L^{\otimes 2})^\vee
\end{equation*}
while the boundary homomorphism $\delta_F$ lies in
\begin{equation*}
  \Hom(H^0(C, L), H^1(C, A)) = \Hom(H^0(C,L), H^0(C, L)^\vee) = H^0(C, L)^\vee \otimes H^0(C, L)^\vee
\end{equation*}
We have a map
\begin{equation}
  \label{eq:extension-map}
  \Ext^1(L, A) = H^0(C, L^{\otimes 2})^\vee
  \rightarrow H^0(C, L)^\vee \otimes H^0(C, L)^\vee, \quad
\end{equation}
given by $F \mapsto \delta_F$, which is dual to the multiplication map of sections
\begin{equation*}
  H^0(C, L) \otimes H^0(C, L) \rightarrow H^0(C, L^{\otimes 2})
\end{equation*}
It turns out that the cokernel of this map is $H^0(C, L^{\otimes 2}) / \Sym^2 H^0(C, L)$,
which is $1$-dimen\-sional.  Hence, up to scaling there is a unique nonzero element in the kernel
of the map \eqref{eq:extension-map}.  It can be checked that this construction
does not depend on the choice of $A$.

In a completely analogous fashion, we get the following result in genus $8$:
\begin{lemma}
  Let $C$ be a curve of genus $8$ with $W^1_4(C) = \emptyset$
  and $A \in W^1_5(C)$.  Set $L = \SerreDual{A}$.
  The bundle $E_C$ is given as the unique nontrivial extension of
  $L$ by $A$ with a $6$-dimensional space of global sections.
\end{lemma}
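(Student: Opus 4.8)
The plan is to imitate verbatim the argument sketched for Lemma~\ref{lem:bundle-construction}, substituting the numerics of genus~$8$. First I would collect the relevant dimensions. Since $C$ has no $\mathfrak{g}^1_4$, any $A\in W^1_5(C)$ is base-point-free and satisfies $h^0(C,A)=2$: a $\mathfrak{g}^2_5$ (or a base-point-free $\mathfrak{g}^2_{5-d}$ obtained after removing base points) would exhibit $C$ either as a plane curve of geometric genus at most~$6$ or as a hyperelliptic curve, both excluded, and $h^0(A-p)=2$ would produce a $\mathfrak{g}^1_4$. By Riemann--Roch $h^1(C,A)=4$, hence $L=\omega_C\otimes A^{-1}$ has degree~$9$ and $h^0(C,L)=h^1(C,A)=4$, while $L^{\otimes 2}$ has degree $18>2g-2$ and is therefore nonspecial with $h^0(C,L^{\otimes 2})=11$.

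Next, for an arbitrary extension $0\to A\to F\to L\to 0$ one has $\det F=\omega_C$, and the associated cohomology sequence
\[
  0\to H^0(C,A)\to H^0(C,F)\to H^0(C,L)\xrightarrow{\delta_F}H^1(C,A)\to\cdots
\]
yields $h^0(C,F)\le h^0(C,A)+h^0(C,L)=6$, with equality exactly when $\delta_F=0$. Serre duality identifies $\Ext^1(L,A)\cong H^1(C,A\otimes L^{-1})\cong H^0(C,L^{\otimes 2})^\vee$ and $\Hom(H^0(C,L),H^1(C,A))=H^0(C,L)^\vee\otimes H^0(C,L)^\vee$, under which $F\mapsto\delta_F$ becomes the dual of the multiplication map $\mu\colon H^0(C,L)\otimes H^0(C,L)\to H^0(C,L^{\otimes 2})$. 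Since $\mu$ factors through $\Sym^2 H^0(C,L)$, of dimension $\binom{5}{2}=10$, and since the induced map on $\Sym^2 H^0(C,L)$ is injective (the image of $C$ under $|L|$ spans $\P^3$ and lies on no quadric, see \autocite{Mukai1993}), the image of $\mu$ has codimension $11-10=1$. Dually, $\ker(F\mapsto\delta_F)$ is one-dimensional, and as it contains $0$ there is, up to scalar and hence up to isomorphism, a unique nontrivial extension $F$ with $h^0(C,F)=6$.

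It remains to identify $F$ with the Mukai bundle. Because $A$ and $L$ are base-point-free, $F$ is globally generated; it has canonical determinant and $h^0(C,F)=6$, so by the uniqueness statement in Theorem~\ref{thm:mukai-bundle-g8} it suffices to know that $F$ is semistable. Conversely, $E_C$ contains every $A\in W^1_5(C)$ as a line subbundle with quotient $\omega_C\otimes A^{-1}$ and vanishing connecting map, so $E_C$ is one of the extensions above and therefore equals $F$; in particular the construction is independent of the choice of $A$. The Riemann--Roch bookkeeping is routine; the genuine inputs, which I would import from \autocite{Mukai1993} rather than reprove, are the semistability of the extension bundle (equivalently, that $E_C$ itself occurs as such an extension) and the injectivity of $\Sym^2 H^0(C,L)\to H^0(C,L^{\otimes 2})$ pinning the cokernel down to exactly one dimension.
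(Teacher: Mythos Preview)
Your proposal is correct and follows exactly the approach the paper intends: the paper gives no separate proof for genus~$8$, merely stating ``In a completely analogous fashion'' after the genus~$6$ sketch, and you have written out that analogous argument with the correct numerics ($h^0(A)=2$, $h^0(L)=4$, $h^0(L^{\otimes 2})=11$, $\dim\Sym^2 H^0(L)=10$, hence a one-dimensional cokernel). Your explicit acknowledgment that the injectivity of $\Sym^2 H^0(C,L)\to H^0(C,L^{\otimes 2})$ and the semistability/identification with $E_C$ are imported from \autocite{Mukai1993} matches the paper's own reliance on Mukai, so there is nothing to add.
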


\section{Constructing the divisors}
\label{sec:divisors}

In both genera, the slope of $E_C$ is
\begin{equation*}
  \mu(E_C) = \frac{\deg \det E_C}{\rk{E_C}} = \frac{2g - 2}{2} = g - 1
\end{equation*}
hence $\chi(E_C) = 0$.  We can therefore consider the virtual theta divisor of $E_C$
\begin{equation*}
  \Theta_{E_C} = \{ \xi \in \Pic^0(C) ~|~ H^0(C, E_C \otimes \xi) \not= 0\}
\end{equation*}
in $\Pic^0(C)$.  Since $E_C$ is a semistable rank $2$ vector bundle,
$\Theta_{E_C}$ is indeed of codimension one (see \autocite{Raynaud1982},
Proposition 1.6.2).  By intersecting $\Rg{g,\ell}$ and the locus
$\{ [C, \xi] ~|~ \xi \in \Theta_{E_C} \}$ in the universal Jacobian,
we obtain that
\begin{equation*}
  \MukaiDiv{g}{\ell} = \left\{ [C, \eta] \in \Rg{g,\ell}
  ~\left|~ H^0(C, E_C \otimes \eta) \not= 0 \right.\right\}
\end{equation*}
is of codimension at most one in $\Rg{g,\ell}$ and expected to be a divisor.

\subsection{Transversality}

We will show that on the general pair $[C, \eta] \in \Rg{g,\ell}$
we have $H^0(C, E_C \otimes \eta) = 0$.
More precisely, we will show that on the general smooth curve $C$
there exists an $\ell$-torsion bundle $\eta$ such that $H^0(C, E_C \otimes \eta) = 0$.

Let $C$ be a smooth and connected curve of genus $g$
and let $E$ be a semistable vector bundle of rank $r \geq 2$
and determinant $\det(E) = \vartheta^{\otimes r}$, where $\vartheta$
is a theta characteristic.  The \emph{theta divisor} of $\vartheta$ is
\begin{equation*}
  \Theta = \{ \xi \in \Pic^0(C) ~|~ H^0(C, \vartheta \otimes \xi) \not= 0 \}
\end{equation*}
Assume further that $h^0(C, E) \geq 1$ and that $E$ admits a theta-divisor,
i.e., the set
\begin{equation*}
  \Theta_E^{\mathrm{set}} = \{ \xi \in \Pic^0(C) ~|~ H^0(C, E \otimes \xi) \not= 0 \}
\end{equation*}
is a proper subset of $\Pic^0(C)$.
Then $\Theta_E^{\mathrm{set}}$ is the support of a natural
divisor $\Theta_E \in |r \Theta|$, called a \emph{theta divisor} of $E$
(see for instance \autocite{Beauville2004}).
We now consider the vector bundle $F \coloneqq E \oplus \vartheta^{\oplus(\ell - r)}$.
As a direct sum of semistable vector bundles of the same slope it is itself
semistable.  Furthermore, it admits an associated theta divisor $\Theta_F$
by assumption on $E$.
Then
\begin{equation*}
  \Supp \Theta_F = \Supp \Theta_E \cup \Supp \Theta
\end{equation*}
and we have
\begin{equation*}
  \Theta_F = \Theta_E + (\ell - r) \Theta \in |\ell \Theta|
\end{equation*}
We now let $J = \Pic^0(C)$ be the Jacobian of $C$
and $f_\ell\colon J \rightarrow \PP H^0(J, \mathcal{O}_J(\ell \Theta))^\ast = \PP^{\ell^g - 1}$ 
be the morphism defined by the linear system $|\ell \Theta|$.
Recall from \autocite{Mumford1966} that the
representation of the theta group of
$\mathcal{O}_J(\ell \Theta)$ is irreducible.
Any linear subspace of $\PP^{\ell^g - 1}$ containing the image
$f_\ell( J[\ell] )$ of the set of $\ell$-torsion points
of $J$ corresponds to an invariant subspace of the representation
of the theta group, hence $f_\ell( J[\ell] )$ is not contained in
a hyperplane.

Summarizing this discussion, we have:
\begin{theorem}
  \label{thm:vector-bundle-vanishing}
  Let $C$ be a smooth connected curve.
  Let $E$ be a semistable vector bundle on $C$ of rank $r \geq 2$ and
  determinant $\det(E) = \vartheta^{\otimes r}$ with $\vartheta$
  a theta characteristic.  Assume $H^0(C, E) \not= 0$ and
  $E$ admits a theta divisor.
  Then there exists a non-trivial $\eta \in \Pic^0(C)[\ell]$
  with $H^0(C, E \otimes \eta) = 0$.
\end{theorem}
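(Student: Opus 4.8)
The plan is to reformulate the existence of the desired $\eta$ as a statement about the complete linear system $|\ell\Theta|$ on the Jacobian $J = \Pic^0(C)$, and then invoke the non-degeneracy of $f_\ell(J[\ell])$ recalled just above the statement. Concretely, the effective divisor $\Theta_F = \Theta_E + (\ell - r)\Theta \in |\ell\Theta|$ produced in the preceding discussion corresponds to a hyperplane $H \subset \PP^{\ell^g - 1}$, namely the zero locus of a defining section of $\Theta_F$, and by construction $f_\ell^{-1}(H) = \Theta_F$, so that for $p \in J$ one has $f_\ell(p) \in H$ if and only if $p \in \Supp\Theta_F$. Since $f_\ell(J[\ell])$ is not contained in any hyperplane, there must exist $\eta \in J[\ell]$ with $f_\ell(\eta) \notin H$, equivalently $\eta \notin \Supp\Theta_F$.

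It then remains only to check that such an $\eta$ has the two required properties. First, $\eta$ is automatically non-trivial: by hypothesis $H^0(C, E) \not= 0$, so the trivial bundle $\mathcal{O}_C$ lies in $\Supp\Theta_E \subseteq \Supp\Theta_F$, whereas $\eta$ does not lie in $\Supp\Theta_F$. Second, from
\begin{equation*}
  \Supp\Theta_F = \Supp\Theta_E \cup \Supp\Theta
\end{equation*}
we get in particular $\eta \notin \Supp\Theta_E = \Theta_E^{\mathrm{set}}$, which by definition means precisely $H^0(C, E \otimes \eta) = 0$. This finishes the proof.

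The substantive input is entirely external, namely the irreducibility of the theta-group representation of $\mathcal{O}_J(\ell\Theta)$ (Mumford's theorem), which forces $f_\ell(J[\ell])$ to span $\PP^{\ell^g - 1}$; everything else is bookkeeping with supports of effective divisors, so I do not expect a genuine obstacle. The one point that needs care is the passage from $E$ to the auxiliary bundle $F = E \oplus \vartheta^{\oplus(\ell - r)}$: its purpose is to move the relevant theta divisor into the \emph{complete} system $|\ell\Theta|$, so that "avoiding a hyperplane'' is meaningful, while retaining $\Supp\Theta_E \subseteq \Supp\Theta_F$; and it is exactly the hypothesis that $E$ admits a theta divisor that guarantees $\Theta_F$ exists as a divisor in the first place.
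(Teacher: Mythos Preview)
Your proof is correct and follows the same approach as the paper: form $F = E \oplus \vartheta^{\oplus(\ell-r)}$ so that $\Theta_F \in |\ell\Theta|$, use the non-degeneracy of $f_\ell(J[\ell])$ (coming from the irreducibility of the theta-group representation) to find an $\ell$-torsion point $\eta$ outside $\Supp\Theta_F$, and then read off both $H^0(C,E\otimes\eta)=0$ and $\eta\neq\mathcal{O}_C$. Your write-up is slightly more explicit about the divisor--hyperplane correspondence and about why each conclusion holds, but the argument is identical to the paper's.
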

\begin{proof}
  Consider as before $F = E \oplus L^{\oplus(\ell - r)}$.
  Then, by the above analysis, there is some torsion
  point $\eta \in \Pic^0(C)[\ell]$ which is not contained in
  the hyperplane of $\PP^{\ell^g - 1}$ defined by
  $\Theta_F = \Theta_E + (\ell - r)\Theta$.
  Hence $\eta$ is not contained in $\Theta_E$, which means
  $H^0(C, E \otimes \eta) = 0$.  By assumption
  $H^0(C, E \otimes \mathcal{O}_C) \not = 0$
  and therefore $\eta \not= \mathcal{O}_C$.
\end{proof}
In particular, we can apply the above theorem to $E_C$,
which is stable of rank $2$ and has canonical determinant,
hence satisfies all the assumptions.
We conclude that $\MukaiDiv{g}{\ell}$ is a divisor for every $\ell$
and both genera $g = 6$ and $g = 8$.

\subsection{Reinterpreting the divisor}

To calculate the divisor classes of $\MukaiDiv{g}{\ell}$,
it will be necessary first to give other characterizations of the pairs
$[C,\eta] \in \MukaiDiv{g}{\ell}$.

In the following discussion we will only consider $[C,\eta] \in \MukaiRg{g,\ell}$.
As discussed in Lemma \ref{lem:bundle-construction}, the bundle $E_C$ is an extension
\[ 0 \rightarrow A \rightarrow E_C \rightarrow \SerreDual{A} \rightarrow 0 \]
where $A\in W^1_4(C)$ if $g = 6$ and $A \in W^1_5(C)$ if $g = 8$.
After tensoring with $\eta$, the associated long exact sequence in cohomology starts with
\begin{equation*}
  0 \rightarrow H^0( C, A \otimes \eta ) \rightarrow H^0( C, E_C \otimes \eta )
  \rightarrow H^0( C, \SerreDual{A} \otimes \eta )
  \xrightarrow{\delta_{E_C\otimes \eta}} H^1( C, A \otimes \eta )
\end{equation*}
We immediately get:
\begin{lemma}
  \label{lem:divisor-boundarymap-condition}
  $[C,\eta] \in \MukaiDiv{g}{\ell}$ if and only if
  there exists an $A$ such that $H^0(C, A \otimes \eta) \not= 0$ or the boundary map
  \begin{equation}
    \label{eq:boundary-morphism}
    \delta_{E_C \otimes \eta}\colon H^0(C, \SerreDual{A} \otimes \eta)
    \rightarrow H^1(C, A \otimes \eta)
  \end{equation}
  is not an isomorphism.
\end{lemma}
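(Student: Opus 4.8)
The plan is to read the statement directly off the long exact cohomology sequence of the defining extension of $E_C$, the only extra input being the vanishing $\chi(E_C \otimes \eta) = 0$ already observed above.

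Concretely, I would fix a pencil $A$ presenting $E_C$ as in Lemma~\ref{lem:bundle-construction} (so $A \in W^1_4(C)$ if $g = 6$ and $A \in W^1_5(C)$ if $g = 8$) and set $L = \SerreDual{A}$. Tensoring the sequence $0 \to A \to E_C \to L \to 0$ by the invertible sheaf $\eta$ remains exact, and the induced cohomology sequence is
\begin{equation*}
  0 \to H^0(A \otimes \eta) \to H^0(E_C \otimes \eta) \to H^0(L \otimes \eta) \xrightarrow{\delta} H^1(A \otimes \eta) \to H^1(E_C \otimes \eta) \to H^1(L \otimes \eta) \to 0,
\end{equation*}
where $\delta = \delta_{E_C \otimes \eta}$ is the map \eqref{eq:boundary-morphism}. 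The first three terms give $h^0(E_C \otimes \eta) = h^0(A \otimes \eta) + \dim \ker \delta$, so $[C,\eta] \in \MukaiDiv{g}{\ell}$ (that is, $h^0(E_C \otimes \eta) \neq 0$) if and only if $H^0(A \otimes \eta) \neq 0$ or $\ker \delta \neq 0$.

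It then remains to upgrade the condition $\ker \delta \neq 0$ to ``$\delta$ is not an isomorphism'' in the case $H^0(A \otimes \eta) = 0$; one implication is trivial. For the converse I would argue as follows: if $H^0(A \otimes \eta) = 0$ and $\ker \delta = 0$, then $H^0(E_C \otimes \eta) = 0$, and since $\chi(E_C \otimes \eta) = \chi(E_C) = 0$ we also get $H^1(E_C \otimes \eta) = 0$; feeding this back into the exact sequence above forces $\delta$ to be surjective, hence an isomorphism. This proves the equivalence for the chosen $A$, and since it holds verbatim for every admissible $A$, the statement follows in the ``there exists an $A$'' form as stated.

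I do not anticipate a genuine obstacle: the argument is the exact sequence together with one dimension count. The only point worth flagging is that the interchangeability of injectivity and surjectivity of $\delta$ really does come from $\chi(E_C \otimes \eta) = 0$, and not from any separate coincidence between the dimensions of $H^0(L \otimes \eta)$ and $H^1(A \otimes \eta)$ (which, via Serre duality, differ by $h^0(A \otimes \eta^{-1}) - h^0(A \otimes \eta)$ and need not agree); routing the equality through $H^1(E_C \otimes \eta)$ sidesteps this entirely.
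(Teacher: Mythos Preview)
Your proof is correct and follows the same route as the paper: the lemma is stated there with only the words ``We immediately get'' after writing out the long exact sequence of the twisted extension. Your write-up simply makes explicit the one point the paper leaves tacit, namely that the equivalence between ``$\delta$ not injective'' and ``$\delta$ not an isomorphism'' is forced by $\chi(E_C\otimes\eta)=0$ via the tail of the same sequence.
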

Since $H^0(C, A \otimes \eta) \not= 0$ happens only on curves in a subvariety of
codimension at least $2$ in $\Rg{g,\ell}$, in what follows we will ignore the locus of such curves.
This does not affect divisor class calculations.
\begin{dummyversion}
  The locus
  \[ \left\{ (C, \eta) \in \MukaiRg{6,\ell} ~|~
    \exists A \in W^1_4(C)\colon H^0(A\otimes \eta)\not=0 \right\}\]
  has codimension $2$ in $\MukaiRg{6,\ell}$.
  A naive argument:  If $h^0( A \otimes \eta ) \geq 1$ for $\eta\in\Pic^0(C)$ then
  $\eta \in W^0_4 - W^1_4$, so it lives in the $4$-dimensional subspace $W^0_4 \subseteq \Pic^0(C)$,
  so the locus of curves having such an $\eta$ is of codimension $2$ in the universal
  Picard variety over $\mathcal{M}_6^0$.  Intersecting this with $\MukaiRg_{6,\ell}$
  gives a codimension $2$ locus there.
  A similar argument shows that in genus $8$, for $\mathfrak{g}^3_9$, we expect the
  codimension to be $3$.
\end{dummyversion}

In genus $6$, we can give another interpretation of $\MukaiDiv{6}{\ell}$.
Let $A \in W^1_4(C)$ and $L = \SerreDual{A}$.  By Riemann--Roch,
we have $h^0(C, L\otimes \eta) = 1$ and also $h^1(C, A\otimes \eta) = 1$.  So for \eqref{eq:boundary-morphism}
to be an isomorphism it is enough for it to be nonzero.
\begin{lemma}
  \label{lem:g6-divisor-condition}
  In the case $g = 6$, the boundary map
  $\delta_{E_C}\colon H^0(C, L\otimes \eta) \rightarrow H^1(C, A \otimes \eta)$
  is nonzero if and only if the multiplication map followed by projection
  \begin{equation}
    \label{eq:torsion-multiplication-map}
    H^0(C, L\otimes \eta) \otimes H^0(C, L\otimes \eta^{-1}) \xrightarrow{m_\eta}
    H^0(C, L^{\otimes 2}) \xrightarrow{p} H^0(C, L^{\otimes 2}) / \Sym^2 H^0(C, L)
  \end{equation}
  is an isomorphism.
\end{lemma}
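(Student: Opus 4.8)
The plan is to express $\delta_{E_C\otimes\eta}$, via Serre duality, in terms of the extension class of $E_C$ and the multiplication $m_\eta$, and then to count dimensions. Recall from Lemma~\ref{lem:bundle-construction} and the sketch of its proof that $E_C$ is the non-trivial extension
\[ 0 \rightarrow A \rightarrow E_C \rightarrow L \rightarrow 0 \]
whose class $e\in\Ext^1(L,A)=H^1(C,A\otimes L^{-1})\cong H^0(C,L^{\otimes 2})^\vee$ is the (up to scalar unique) functional vanishing on the subspace $\Sym^2 H^0(C,L)\subseteq H^0(C,L^{\otimes 2})$, i.e.\ on the image of the multiplication map of sections. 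Since this subspace has codimension one, $e$ factors as $H^0(C,L^{\otimes 2})\xrightarrow{p}H^0(C,L^{\otimes 2})/\Sym^2 H^0(C,L)\xrightarrow{\bar e}\CC$ with $\bar e$ an isomorphism.

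First I would tensor the extension with $\eta$. Since $\SheafHom(L\otimes\eta,A\otimes\eta)\cong A\otimes L^{-1}$ canonically, the sequence $0\rightarrow A\otimes\eta\rightarrow E_C\otimes\eta\rightarrow L\otimes\eta\rightarrow 0$ carries the same extension class $e$, and its connecting homomorphism $\delta_{E_C\otimes\eta}\colon H^0(C,L\otimes\eta)\rightarrow H^1(C,A\otimes\eta)$ is cup product with $e$. Next I would dualize: by Serre duality $H^1(C,A\otimes\eta)^\vee\cong H^0(C,\omega_C\otimes A^{-1}\otimes\eta^{-1})=H^0(C,L\otimes\eta^{-1})$ (using $\omega_C=A\otimes L$), so the transpose of $\delta_{E_C\otimes\eta}$ is a bilinear pairing $H^0(C,L\otimes\eta)\otimes H^0(C,L\otimes\eta^{-1})\rightarrow\CC$. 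The compatibility of cup product with multiplication of sections and with the Serre trace --- concretely $\tr\bigl((e\cup s)\cup t\bigr)=\tr\bigl(e\cup(s\cdot t)\bigr)$ with $s\cdot t\in H^0(C,L^{\otimes 2})$ and $e$ viewed as a functional there --- shows that this pairing is exactly $(s,t)\mapsto e\bigl(m_\eta(s\otimes t)\bigr)$. (Any sign appearing in these identifications is irrelevant below.) Equivalently, this is the same linear-algebra computation as in the proof of Lemma~\ref{lem:bundle-construction}, now carried out with the $\eta$-twisted sections.

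To conclude: since $e=\bar e\circ p$ with $\bar e$ an isomorphism, $\delta_{E_C\otimes\eta}$ is nonzero if and only if $p\circ m_\eta$ is nonzero. By the Riemann--Roch remarks preceding the lemma we have $h^0(C,L\otimes\eta)=1$ and $h^1(C,A\otimes\eta)=1$, and the latter gives $h^0(C,L\otimes\eta^{-1})=1$ by Serre duality; hence the source $H^0(C,L\otimes\eta)\otimes H^0(C,L\otimes\eta^{-1})$ is one-dimensional. As the target $H^0(C,L^{\otimes 2})/\Sym^2 H^0(C,L)$ is also one-dimensional, $p\circ m_\eta$ is nonzero precisely when it is an isomorphism, which is the assertion.

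The only delicate point --- the main obstacle --- is making the identifications of the middle paragraph precise: that the $\eta$-twisted sequence carries the same class $e$, and that the Serre transpose of cup-product-with-$e$ is literally $e\circ m_\eta$ rather than $e$ composed with some other natural map. I would handle this either by invoking the general formalism (naturality of the long exact cohomology sequence under $-\otimes\eta$, together with compatibility of the Yoneda pairing with Serre duality), or, for a self-contained argument, by passing to \v{C}ech cocycles on an open cover trivializing $\eta$, $A$ and $L$, where the claim becomes a short bookkeeping check. Everything else is dimension counting with data already recorded for $E_C$.
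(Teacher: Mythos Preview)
Your proof is correct and follows essentially the same route as the paper's: both identify the connecting homomorphism $\delta_{E_C\otimes\eta}$, via Serre duality, with the evaluation of the extension class $e\in H^0(C,L^{\otimes 2})^\vee$ on the image of the multiplication $m_\eta$, and then use that $e$ factors through the one-dimensional quotient $H^0(C,L^{\otimes 2})/\Sym^2 H^0(C,L)$. The only organizational difference is that the paper packages the argument as a map $\alpha\colon \Ext^1(L,A)\to H^0(L\otimes\eta)^\vee\otimes H^0(L\otimes\eta^{-1})^\vee$ (dual to $m_\eta$) and then specializes to the class of $E_C$, whereas you fix $e$ from the start and compute directly; the content is identical.
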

\begin{proof}
  Since $C$ is not a plane quintic, $L$ is base point free, so it induces
  a morphism to $\P^2$.  The image is birational to $C$ if and only if
  $C$ is not trigonal and not bielliptic, so for a general genus $6$ curve
  $L$ induces a birational map to a $4$-nodal plane sextic.  This implies
  that the multiplication map $\Sym^2 H^0(C, L) \rightarrow H^0(C, L^{\otimes 2})$
  is injective.  So both domain and codomain of the map $p \circ m_\eta$ are $1$-dimensional.
  For a bielliptic curve $C \rightarrow E$
  the same conclusion holds by $H^0(C, L) \cong H^0(E, \mathcal{O}_E(1))$.
  Hence \eqref{eq:torsion-multiplication-map} is an isomorphism if and only if it is nonzero.
  
  Extensions of $L$ by $A$ and of $L \otimes \eta$ by $A \otimes \eta$ are both parametrized by
  \[\Ext^1(L, A) \cong \Ext^1(L \otimes \eta, A \otimes \eta)
  \cong H^1(C, A \otimes L^{-1}) \cong H^0(C, L^{\otimes 2})^\vee\]
  while the boundary morphism $\delta_{E_C\otimes \eta}$ lives in
  \begin{align*}
    \Hom( H^0(C, L\otimes \eta), H^1(C, A\otimes \eta) )
    & \cong H^0(C, L \otimes \eta)^\vee \otimes H^1(C, A \otimes \eta)\\
    & \cong H^0(C, L \otimes \eta)^\vee \otimes H^0(C, L \otimes \eta^{-1})^\vee
  \end{align*}
  and we have a map
  \begin{equation}
    \label{eq:ext-to-boundary-hom}
    \alpha\colon H^0(C, L^{\otimes 2})^\vee \rightarrow
    H^0(C, L \otimes \eta)^\vee \otimes H^0(C, L \otimes \eta^{-1})^\vee
  \end{equation}
  sending an extension $E\otimes \eta$ to the boundary
  homomorphism $\delta_{E\otimes \eta}$.  Note that $\alpha$ is the dual of the
  multiplication map $m_\eta$.  We denote by $[\alpha]$ the composition of $\alpha$
  with the dual of the projection $p$.

  The space $H^0(C, L^{\otimes 2})^\vee / \Sym^2 H^0(C, L)^\vee$ is generated
  by the class $[\phi_{E_C}]$ of the map corresponding to the Mukai bundle $E_C$
  (see the discussion after Lemma \ref{lem:bundle-construction}).
  Now \eqref{eq:torsion-multiplication-map} is the zero map if and only if
  the dual map $[\alpha]$ is the zero map if and only if $[\phi_{E_C}]$ is mapped
  to $0$ by $[\alpha]$, i.e., if $[\phi_{E_C}] \circ (p \circ m_\eta) = 0$.
  But this is exactly the boundary map $\delta_{E_C\otimes\eta}$
  given by the image of the extension $E_C\otimes \eta$ under \eqref{eq:ext-to-boundary-hom}.
\end{proof}
\begin{remark}
  For the case $\ell = 2$ further descriptions of the divisor exist.
  A general curve $[C,\eta] \in \MukaiDiv{6}{2}$ equivalently satisfies the following conditions:
  \begin{enumerate}[label=\alph*)]
  \item $C$ has a $4$-nodal plane sextic model with a totally tangent conic,
    i.e., there exists an $L\in W^2_6(C)$ inducing a birational map
    to $\Gamma \subseteq \P^2$, and a conic $Q \subseteq \P^2$ with
    $Q \cap \Gamma = 2D$ for some $D \in C^{(6)}$.
    This identification follows from Lemma \ref{lem:g6-divisor-condition}.
  \item The Prym map $\Rg{6,2} \rightarrow \Ag{5}$ is ramified at $[C,\eta]$
    (see \autocite{FGSV2014}, Theorem 8.1).
  \item $[C,\eta]$ is in the Prym--Brill--Noether divisor in $\Rg{6,2}$, i.e.
    \begin{equation*}
      \emptyset \not= V_3(C,\eta)
      = \left\{ L \in \Nm_f^{-1}(K_C) ~\left|~ h^0(\normaliz{C}, L) \geq r + 1,
      h^0(\normaliz{C}, L) \equiv r+1 \pmod{2} \right.\right\}
    \end{equation*}
    where $f\colon \normaliz{C} \rightarrow C$ is the étale double cover associated to $\eta$
    (\autocite{FGSV2014}, Theorem 0.4).
  \item $[C,\eta]$ is a section of a Nikulin surface (see \autocite{FV2012}, Theorem 0.5).
  \end{enumerate}
\end{remark}
\begin{remark}
  We can use the characterization of Lemma \ref{lem:g6-divisor-condition}
  to give another illustrative demonstration of the divisoriality of $\MukaiDiv{6}{\ell}$.
  This is achieved by explicitly constructing a pair $[C,\eta]$
  and a line bundle $L \in W^2_6(C)$ such that the map \eqref{eq:torsion-multiplication-map}
  is an isomorphism.  For brevity we skip the necessary proof that various moduli
  spaces of linear series and torsion points are irreducible.

  The construction of $[C, \eta, L]$ is as follows.
  Let $C$ be a plane quintic and choose any $L \in W^2_6(C)$.
  Let $\vartheta = \mathcal{O}_C(1)$ be the unique $\mathfrak{g}^2_5$ on $C$
  and recall that it is an odd theta characteristic.  Now $L$ can be
  written as $L = \vartheta \otimes \mathcal{O}_C(x)$
  for some point $x\in C$.  In particular, $x$ is a base point of $L$.
  Now choose an $\ell$-torsion bundle $\eta$
  on $C$ such that $h^0( C, \vartheta \otimes \eta ) = 0$.
  Then, by Riemann--Roch and Serre duality,
  $h^0( C, \vartheta \otimes \eta^{-1} ) = 0$ as well.
  This implies
  \[h^0( C, L \otimes \eta ) = h^0( C, L \otimes \eta^{-1} ) = 1\]
  and $x$ is neither a base point of $L \otimes \eta$ nor of $L \otimes \eta^{-1}$.
  Let $H^0( C, L \otimes \eta ) = \langle\sigma\rangle$ and
  $H^0( C, L \otimes \eta^{-1} ) = \langle\tau\rangle$
  and consider the map
  \[ \langle\sigma\rangle \otimes \langle\tau\rangle \rightarrow
    H^0( C, L^{\otimes 2} ) / \Sym^2 H^0( C, L )\]
  Observe that the multiplication map $\Sym^2 H^0( C, L) \rightarrow H^0( C, L^{\otimes 2} )$
  is injective, since
  \[ \Sym^2 H^0( C, \mathcal{O}_C(1) ) \rightarrow H^0( C, \mathcal{O}_C(2) )\]
  is an isomorphism ($C \subseteq \P^2$ is not contained in a quadric).
  The base locus of the image of $\Sym^2 H^0( C, L )$ in
  $H^0( C, L^{\otimes 2} ) = H^0( C, \omega_C(2x) )$ contains $2x$.
  But $\sigma \otimes \tau$, considered as a section in $H^0( C, L^{\otimes 2} )$,
  does not vanish at $x$.  Therefore it cannot be contained in the image of $\Sym^2( C, L )$, whence
  \[H^0( C, L^{\otimes 2} ) \cong \langle \sigma\otimes \tau\rangle \oplus \Sym^2 H^0( C, L )\]
  and we are done.
\end{remark}

\section{Divisor classes}
\label{sec:divisor-classes}

\subsection{Strategy}

An effective method to calculate divisor classes is to give a determinantal
description of the divisors, i.e., express them as the locus where a certain
morphism between vector bundles drops rank.  If the divisor involves
global sections of line bundles on curves, the vector bundles are usually
constructed over some space $\UnivGrd{r}{d}$ of linear series over the moduli space of curves.

To calculate the classes of $\MukaiDiv{g}{\ell}$ or some compactification of it,
a direct approach would be to try to use Lemma
\ref{lem:divisor-boundarymap-condition} and globalize the map
\begin{equation*}
  \delta_{E_C \otimes \eta}\colon H^0(C, \SerreDual{A} \otimes \eta)
  \rightarrow H^1(C, A \otimes \eta)
\end{equation*}
to a morphism of vector bundles over
$\UnivGrdTors{r}{d}{\ell} = \UnivGrd{r}{d}\times_{\Mg{g}} \Rg{g,\ell}$.
A naive attempt is to pass to the moduli stacks and to try to create a global extension
\begin{equation*}
  0 \rightarrow \mathcal{A} \rightarrow \mathcal{E} \rightarrow
  \omega_\UnivCrdTorsMorph \otimes \mathcal{A}^{-1} \rightarrow 0,
\end{equation*}
on the universal curve
$\UnivCrdTorsMorph\colon \UnivCrdTors{r}{d}{\ell} \rightarrow \UnivGrdTors{r}{d}{\ell}$,
tensor it by the universal $\ell$-torsion bundle $\PrymPoincare$ and use the map
induced by the long exact sequence of the pushforward $\UnivGrdTorsMorph_\ast$
where $\UnivGrdTorsMorph\colon \UnivGrdTors{r}{d}{\ell} \rightarrow \Rg{g,\ell}$.

However, this naive approach must fail.  The bundle $E_C$, as an extension of $\SerreDual{A}$ by $A$,
is only defined up to isomorphism on each curve and the choice can not
be made globally on the whole moduli space.
It is possible, though, to give a choice-free description of the condition that
the boundary morphism induced by $E_C \otimes \eta$ is not an isomorphism.

To this end, let $L = \SerreDual{A}$ and observe that the codomain of $\delta_{E_C \otimes \eta}$ is
\begin{equation*}
  H^1(C, A \otimes \eta) \cong H^0(C, L \otimes \eta^{-1})^\vee
\end{equation*}
by Serre duality.  Now the map
\begin{equation}
  \label{eq:degen-map-on-fibers}
  H^0(C, L \otimes \eta) \otimes
  \left( \frac{ H^0(C, L^{\otimes 2}) }{ \Sym^2 H^0(C, L) } \right)^\vee
  \rightarrow H^0(C, L \otimes \eta^{-1})^\vee
\end{equation}
can be defined canonically by setting
\begin{equation*}
  s \otimes f \mapsto [ t \mapsto f( s \cdot t ) ]
\end{equation*}
The quotient that appears in \eqref{eq:degen-map-on-fibers}
can be seen as encoding the $\CC^\ast$ of possible choices
for $E_C$ in $\Ext^1(L, A)$.
It is clear that the map \eqref{eq:degen-map-on-fibers} is an isomorphism if
and only if $\delta_{E_C \otimes \eta}$ is.
Since there are no choices involved in defining the map, we can readily globalize it.

\subsection{Definition of the degeneracy locus}

We first construct an appropriate partial compactification of $\Rg{g,\ell}$
where the class calculations can be carried out.
Here we use a setup similar to \autocite{FL2010} and \autocite{CEFS2013}.
Again denote by $\RgForgetfulMap\colon \Rg{g,\ell} \rightarrow \Mg{g}$ the forgetful map.
Let $\RgPrime{6,\ell} = \RgPCZero{6,\ell} \cup \RgForgetfulMap^\ast ( \BoundaryPCZero )$,
where $\RgPCZero{6,\ell}$ is the locus of smooth curves $[C,\eta]$ such that
$\dim W^2_6(C) = 0$ and $H^1(C, L \otimes \eta) = 0$
for all $L \in W^2_6(C)$, and $\BoundaryPCZero$ is the locus of irreducible one-nodal
curves $[C_{pq}] \in \Delta_0$ where $[C,p,q] \in \Mg{5,2}$ is Petri general.

Similarly, let $\RgPrime{8,\ell} = \RgPCZero{8,\ell} \cup \RgForgetfulMap^\ast( \BoundaryPCZero )$
be the locus of smooth curves $[C,\eta] \in \Rg{8,\ell}$ such that $\dim W^3_9(C) = 0$,
and $H^1(C, L \otimes \eta) = 0$ for all $L \in W^3_9(C)$, while $\BoundaryPCZero$
is the locus of curves $[C_{pq}]$ with $[C,p,q] \in \Mg{7,2}$ Petri general.
Observe that in both cases the complement of $\RgPrime{g,\ell}$ in
$\Rg{g,\ell}\cup \RgForgetfulMap^\ast( \Delta_0 )$ has codimension $2$,
so divisor class calculations will not be affected
(use Mumford's theorem, Theorem 4.1 in \autocite{Mukai1993} and the discussion
in section 8 of \autocite{FGSV2014}).

We are now in a position to provide a determinantal description of the divisor
$\MukaiDiv{g}{\ell}$.  To this end, we will construct a morphism of vector
bundles of the same rank over $\RgPrime{g,\ell}$ such that on fibers it corresponds
exactly to the map in \eqref{eq:degen-map-on-fibers}.
Then $\MukaiDivBar{g}{\ell}$ will be contained in the first degeneracy locus of this morphism
and its class can be calculated using Porteous' formula.

The setup is almost the same for both genera.  In genus $6$ we let $r = 2$, $d = 6$
and in genus $8$ we let $r = 3$, $d = 9$.
Now let $\UnivGrdTors{r}{d}{\ell}$ be the moduli stack of triples $[C, \eta, L]$
over $\RgPrime{g,\ell}$ where $L \in W^r_d(C)$ and
let $\UnivGrdTorsMorph\colon \UnivGrdTors{r}{d}{\ell} \rightarrow \RgPrime{g,\ell}$
be the morphism forgetting the $\mathfrak{g}^r_d$.
Denote further by
$\UnivCrdTorsMorph\colon \UnivCrdTors{r}{d}{\ell} \rightarrow \UnivGrdTors{r}{d}{\ell}$
the universal curve and let $\PoincareBundle$ be the universal $\mathfrak{g}^r_d$.
We also have the universal $\ell$-torsion bundle $\PrymPoincare$ over $\UnivCrdTors{r}{d}{\ell}$.
We will slightly abuse notation and denote the pullbacks of $\lambda$, $\delta_0'$, $\delta_0''$
and $\delta_0^{(a)}$ by $\UnivGrdTorsMorph$ by the same symbols, respectively.

By Grauert's theorem, $\UnivCrdTorsMorph_\ast( \PoincareBundle^{\otimes i} )$ is a
vector bundle for $i = 1,2$.  However, we do not know this for
$\UnivCrdTorsMorph_\ast( \PoincareBundle \otimes \PrymPoincare )$, since the
dimension of $H^0(C, L \otimes \eta)$ jumps on fibers over the whole boundary divisor $\Delta_0''$:
\begin{lemma}
  Let $g = 6$ and $[C,\eta] \in \Delta_0''$.  Then for any $L \in W^2_6(C)$
  we have $h^0(C, L \otimes \eta) = 2$.  Likewise, for $g = 8$ and any $L \in W^3_9(C)$
  on $[C,\eta] \in \Delta_0''$ we have $h^0(C, L \otimes \eta) = 3$.
\end{lemma}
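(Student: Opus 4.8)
The plan is to compute $h^0(C, L \otimes \eta)$ on the normalization $\nu\colon \normaliz{C} \rightarrow C$, exploiting that $[\normaliz{C}, p, q]$ is Petri general — this is built into the definition of $\RgPrime{g,\ell}$, with $p, q$ the two preimages of the node. I treat $g = 6$; the genus $8$ case is word-for-word the same, with $\mathfrak{g}^2_6$ replaced by $\mathfrak{g}^3_9$ and the numerical bounds on $\normaliz{C}$ adjusted accordingly. Since $[C, \eta] \in \Delta_0''$ we have $\nu^\ast \eta \cong \mathcal{O}_{\normaliz{C}}$, so both $L$ and $L \otimes \eta$ pull back to the same line bundle $M \coloneqq \nu^\ast L$ of degree $6$ on $\normaliz{C}$; they differ only in the gluing datum over the node, that of $L \otimes \eta$ being $\xi$ times that of $L$ for a nontrivial $\ell$-th root of unity $\xi$ (the class of $\eta$ under the isomorphism $\ker(\Pic^0 C \rightarrow \Pic^0 \normaliz{C}) \cong \CC^\ast$ recalled above). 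From the normalization sequence $0 \rightarrow N \rightarrow \nu_\ast \nu^\ast N \rightarrow \CC \rightarrow 0$ one obtains, for any line bundle $N$ on $C$,
\[
  h^0(C, N) = h^0(\normaliz{C}, \nu^\ast N) - 1 + \dim \coker\big( r_N \colon H^0(\normaliz{C}, \nu^\ast N) \rightarrow \CC \big),
\]
where $r_N(s) = s(q) - \sigma_N(s(p))$ compares the two fibre values of $s$ via the gluing $\sigma_N$ of $N$; so everything reduces to deciding whether $r_N$ vanishes or is surjective.

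First I would pin down $h^0(\normaliz{C}, M)$. Pullback gives an injection $\nu^\ast\colon H^0(C, L) \hookrightarrow H^0(\normaliz{C}, M)$, so $h^0(\normaliz{C}, M) \geq h^0(C, L) = 3$; conversely $h^0(\normaliz{C}, M) \geq 4$ would force $h^1(\normaliz{C}, M) \geq 2$ by Riemann--Roch, i.e. $K_{\normaliz{C}} \otimes M^{-1}$ (of degree $2$) would be a $\mathfrak{g}^1_2$ and $\normaliz{C}$ hyperelliptic — impossible for a Petri general genus $5$ curve. (In genus $8$: $h^0 \geq 5$ would yield a $\mathfrak{g}^1_3$, i.e.\ trigonality.) Hence $h^0(\normaliz{C}, M) = h^0(C, L)$, so $\nu^\ast$ is an isomorphism on global sections: every section of $M$ already descends to $L$, which is exactly the statement $r_L \equiv 0$.

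Next I would compute $r_{L \otimes \eta}$. Since the gluing of $L \otimes \eta$ is $\xi$ times that of $L$, the functional $r_{L \otimes \eta}$ equals $r_L$ minus a nonzero multiple of the evaluation map $\mathrm{ev}_p$ at $p$ (here we use $\xi \neq 1$). As $r_L \equiv 0$, it follows that $r_{L \otimes \eta}$ is itself a nonzero multiple of $\mathrm{ev}_p$, hence surjective as soon as $p$ is not a base point of $M$. But $M$ is globally generated: if $x$ were a base point, $M(-x)$ would be a $\mathfrak{g}^2_5$ on $\normaliz{C}$ (a $\mathfrak{g}^3_8$ in genus $8$), so by Riemann--Roch $K_{\normaliz{C}} \otimes M(-x)^{-1}$ would be a $\mathfrak{g}^1_3$ (a $\mathfrak{g}^1_4$ in genus $8$), forcing $\normaliz{C}$ trigonal (resp.\ tetragonal) — again excluded by Petri-generality of the genus $5$ (resp.\ $7$) curve $\normaliz{C}$. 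Therefore $r_{L \otimes \eta}$ is surjective and the displayed formula yields
\[
  h^0(C, L \otimes \eta) = h^0(\normaliz{C}, M) - 1 = h^0(C, L) - 1 = 2,
\]
respectively $3$ in genus $8$.

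The one point requiring care is the bookkeeping at the node: identifying $H^0(C, N)$ with the kernel of the compare-the-two-fibres functional $r_N$ on $H^0(\normaliz{C}, \nu^\ast N)$, and then noting that twisting by the \emph{nontrivial} torsion bundle $\eta$ (i.e.\ $\xi \neq 1$) is precisely what converts the degenerate situation $r_L \equiv 0$ for $L$ into the rank-one functional $r_{L \otimes \eta}$ proportional to $\mathrm{ev}_p$. Everything else — the bound $h^0(\normaliz C,M)=h^0(C,L)$ and the global generation of $M$ — follows at once from Clifford's theorem (or elementary Riemann--Roch) together with the Petri hypothesis on $\normaliz{C}$ that is part of the definition of $\RgPrime{g,\ell}$.
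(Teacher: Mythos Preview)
Your proof is correct and follows the same route as the paper: pull back to the normalization, use Petri/Brill--Noether generality of $\normaliz{C}$ to pin down $h^0(\normaliz{C},\nu^\ast L)=r+1$, and then show that the map $H^0(\normaliz{C},\nu^\ast L)\to\CC$ coming from the normalization sequence (your $r_{L\otimes\eta}$, the paper's $H^0(e')$) is surjective. The paper dispatches this last surjectivity in a single line, whereas you spell it out via the comparison of gluing functionals and the global generation of $\nu^\ast L$; apart from this extra detail the two arguments coincide.
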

\begin{proof}
  Let $\nu\colon \normaliz{C} \rightarrow C$ be the normalization of $C$ and $x$ be the node.
  Then $\nu^\ast \eta = \mathcal{O}_{\normaliz{C}}$ and $\nu^\ast L \in W^r_{d}(\normaliz{C})$,
  since $\normaliz{C}$ is Brill--Noether general.
  From the exact sequence
  \begin{equation*}
    0 \rightarrow \mathcal{O}_C \rightarrow \nu_\ast \mathcal{O}_{\widetilde{C}}
    \xrightarrow{e} \CC_x \rightarrow 0
  \end{equation*}
  we get
  \begin{equation*}
    0 \rightarrow L\otimes \eta \rightarrow \nu_\ast \nu^\ast L \xrightarrow{e'}
    L\otimes \eta|_x \rightarrow 0
  \end{equation*}
  and taking long exact sequence in cohomology we obtain
  \begin{equation*}
    0 \rightarrow H^0(C, L\otimes \eta) \rightarrow H^0(\normaliz{C}, \nu^\ast L)
    \xrightarrow{H^0(e')} \CC
  \end{equation*}
  Now $H^0(e)$ is the zero map, hence $H^0(e')$ must be nonzero and we get
  \begin{equation*}
    h^0(C, L\otimes \eta) = h^0(\normaliz{C}, \nu^\ast L) - 1 = r + 1 - 1 = r\qedhere
  \end{equation*}
\end{proof}
This shows that $R^1 \UnivCrdTorsMorph_\ast( \PoincareBundle \otimes \PrymPoincare )$ is
supported
on $\Delta_0''$ and of rank $1$ over there.
On the other hand, we do not know whether
$\UnivCrdTorsMorph_\ast( \PoincareBundle \otimes \PrymPoincare )$ is a vector bundle.
However, it is torsion-free, hence locally free in codimension $1$ and we can
throw out the at most codimension $2$ loci in $\RgPrime{g,\ell}$ where the rank jumps.
This will not affect our divisor class calculations.
Hence we will assume $\UnivCrdTorsMorph_\ast( \PoincareBundle \otimes \PrymPoincare )$
and $\UnivCrdTorsMorph_\ast( \PoincareBundle \otimes \PrymPoincare^{-1} )$
are vector bundles.
Now let
\begin{equation*}
  \mathcal{E} = \UnivCrdTorsMorph_\ast( \PoincareBundle \otimes \PrymPoincare ) \otimes
  \left( \UnivCrdTorsMorph_\ast( \PoincareBundle^{\otimes 2} ) /
    \Sym^2 \UnivCrdTorsMorph_\ast( \PoincareBundle ) \right)^\vee
\end{equation*}
and
\begin{equation*}
  \mathcal{F} = \left( \UnivCrdTorsMorph_\ast\big(
    \PoincareBundle \otimes \PrymPoincare^{-1} \big) \right)^\vee
\end{equation*}
We obtain a morphism 
\begin{equation}
  \label{eq:degenmorph}
  \DegenMorph_{g,\ell} \colon \mathcal{E} \rightarrow \mathcal{F}
\end{equation}
whose first degeneracy locus $Z_1(\DegenMorph_{g,\ell})$, pushed forward by $\UnivGrdTorsMorph$
and restricted to the Mukai locus $\MukaiRg{g,\ell}$,
coincides with our divisor $\MukaiDiv{g}{\ell}$.

\subsection{Calculation of the classes}

First we apply Porteous' formula to the morphism \eqref{eq:degenmorph} to obtain
\begin{equation*}
  [Z_1(\DegenMorph_{g,\ell})] = c_1(\mathcal{F} - \mathcal{E}) = c_1(\mathcal{F}) - c_1(\mathcal{E})
\end{equation*}
Using the elementary fact
\begin{equation*}
  c_1(\Sym^2 \mathcal{G}) = (\rk(\mathcal{G}) + 1) c_1(\mathcal{G})
\end{equation*}
for a vector bundle $\mathcal{G}$ we can write
\begin{equation*}
  c_1(\mathcal{E}) =
  c_1( \UnivCrdTorsMorph_\ast( \PoincareBundle \otimes \PrymPoincare ) )
  - (r-1) c_1( \UnivCrdTorsMorph_\ast( \PoincareBundle^{\otimes 2} ) )
  + (r-1)(r+2) c_1( \UnivCrdTorsMorph_\ast( \PoincareBundle ) )
\end{equation*}
We use Grothendieck--Riemann--Roch to calculate the Chern classes in these expressions.
Let $\omega_\UnivCrdTorsMorph$ be the relative dualizing sheaf of $\UnivCrdTorsMorph$ and
consider the classes
\begin{equation*}
  \mathfrak{a} = \UnivCrdTorsMorph_\ast(c_1^2(\PoincareBundle)),\quad
  \mathfrak{b} = \UnivCrdTorsMorph_\ast(c_1(\PoincareBundle)\cdot c_1(\omega_\UnivCrdTorsMorph)),\quad
  \mathfrak{c} = c_1(\UnivCrdTorsMorph_\ast(\PoincareBundle))
\end{equation*}
in $A^1(\UnivGrdTors{r}{d}{\ell})$.  Furthermore, let
$\mathfrak{d} =
c_1 \left( R^1 \UnivCrdTorsMorph_\ast( \PoincareBundle \otimes \PrymPoincare )\right)$.
\begin{dummyversion}
  \begin{remark}
    We have $\mathfrak{d} = \alpha \delta_0''$ for some $\alpha \geq 0$,
    probably $\mathfrak{d} = \delta_0''$. 
  \end{remark}
\end{dummyversion}
For brevity, set
\begin{equation*}
  \rho = \sum_{a = 1}^{\lfloor \ell/2 \rfloor} \frac{a(\ell - a)}{\ell} \delta_0^{(a)}
\end{equation*}
Applying Grothendieck--Riemann--Roch and using \autocite{CEFS2013}, Proposition 1.6, we get
\begin{align*}
  c_1( \UnivCrdTorsMorph_\ast( \PoincareBundle \otimes \PrymPoincare^{\pm 1} ) )
  & = \lambda + \frac{1}{2} \mathfrak{a} - \frac{1}{2} \mathfrak{b}
  - \frac{1}{2} \rho + \mathfrak{d}\\
  c_1( \UnivCrdTorsMorph_\ast( \PoincareBundle^{\otimes 2} ) )
  & = \lambda + 2 \mathfrak{a} - \mathfrak{b}
\end{align*}
\begin{dummyversion}
  \begin{remark}
    First I thought that we don't get a map
    \begin{equation*}
      \UnivCrdTorsMorph_\ast\big( \PoincareBundle \otimes \PrymPoincare^{-1} \big)
      \otimes 
      \UnivCrdTorsMorph_\ast\big( \PoincareBundle \otimes \PrymPoincare \big)
      \rightarrow
      \UnivCrdTorsMorph_\ast ( \PoincareBundle^{\otimes 2} )
    \end{equation*}
    and that we need to use
    \begin{equation*}
      \UnivCrdTorsMorph_\ast\big( \PoincareBundle \otimes \PrymPoincare^{\otimes(\ell-1)} \big)
      \otimes 
      \UnivCrdTorsMorph_\ast\big( \PoincareBundle \otimes \PrymPoincare \big)
      \rightarrow
      \UnivCrdTorsMorph_\ast ( \PoincareBundle^{\otimes 2} )
    \end{equation*}
    because I thought the contributions of $\PrymPoincare$ in the divisor
    would cancel out and we'd get a contradiction to the slope conjecture/theorem.
    It turns out I simply miscalculated.  But it is worth noting still that
    $\PrymPoincare^{\otimes \ell} \rightarrow \mathcal{O}_X$ is \emph{not}
    an isomorphism on the universal curve $X$.  There is no
    map in the other direction.  But it should be injective at least.
  \end{remark}
\end{dummyversion}
Putting everything together, we obtain
\begin{equation}
  \label{eq:degenclass}
  [Z_1(\DegenMorph_{g,\ell})] = (r-3)\lambda + (2r-3)\mathfrak{a}
  - (r-2)\mathfrak{b} - (r^2 + r - 2)\mathfrak{c} - 2\mathfrak{d}
  + \rho
\end{equation}
\begin{lemma}
  \label{lem:class-expressions}
  For $g = 6$ we have
  \begin{equation*}
    \UnivGrdTorsMorph_\ast( \mathfrak{a} )
    = -93 \cdot \lambda + \frac{23}{2}\RgForgetfulMap^\ast(\delta_0),\quad
    \UnivGrdTorsMorph_\ast( \mathfrak{b} )
    = -\frac{3}{2} \lambda + \frac{3}{4}\RgForgetfulMap^\ast(\delta_0),\quad
    \UnivGrdTorsMorph_\ast( \mathfrak{c} )
    = -\frac{133}{4} \lambda + \frac{33}{8}\RgForgetfulMap^\ast(\delta_0)
  \end{equation*}
  and for $g = 8$ we have
  \begin{equation*}
    \UnivGrdTorsMorph_\ast( \mathfrak{a} )
    = -267 \cdot \lambda + \frac{69}{2} \RgForgetfulMap^\ast(\delta_0),\quad
    \UnivGrdTorsMorph_\ast( \mathfrak{b} )
    = 3 \cdot \lambda + \frac{3}{2} \RgForgetfulMap^\ast(\delta_0),\quad
    \UnivGrdTorsMorph_\ast( \mathfrak{c} )
    = -100 \cdot \lambda + 13 \RgForgetfulMap^\ast(\delta_0)
  \end{equation*}
\end{lemma}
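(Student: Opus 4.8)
The plan is to reduce the three computations --- carried out for each of the triples $(g,r,d) = (6,2,6)$ and $(g,r,d) = (8,3,9)$ --- to a computation over $\Mg{g}$ by base change, and then to apply the tautological-class formulas of \autocite{Farkas2009}. Let $\mathcal{M}'_g$ be the partial compactification of $\Mg{g}$ lying below $\RgPrime{g,\ell}$ (smooth curves with $\dim W^r_d = 0$, together with the Petri-general one-nodal locus $\BoundaryPCZero$), and let $\UnivGrd{r}{d} \to \mathcal{M}'_g$ be the universal $W^r_d$, with universal $\mathfrak{g}^r_d$ on its universal curve. By construction $\UnivGrdTors{r}{d}{\ell} = \UnivGrd{r}{d} \times_{\mathcal{M}'_g} \RgPrime{g,\ell}$; the universal curve $\UnivCrdTorsMorph$ is the pullback of the universal curve of $\UnivGrd{r}{d}$ and $\PoincareBundle$ is the pullback of the universal $\mathfrak{g}^r_d$, so $\mathfrak{a}, \mathfrak{b}, \mathfrak{c}$ are $q^\ast$ of the corresponding classes $\mathfrak{a}_0, \mathfrak{b}_0, \mathfrak{c}_0 \in A^1(\UnivGrd{r}{d})$, where $q \colon \UnivGrdTors{r}{d}{\ell} \to \UnivGrd{r}{d}$ is the projection. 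Since $\RgForgetfulMap \colon \RgPrime{g,\ell} \to \mathcal{M}'_g$ is finite and flat and $\mu \colon \UnivGrd{r}{d} \to \mathcal{M}'_g$ (the forgetful map) is proper, flat base change for Chow groups gives $\UnivGrdTorsMorph_\ast(\mathfrak{a}) = \RgForgetfulMap^\ast(\mu_\ast \mathfrak{a}_0)$, and similarly for $\mathfrak{b}$ and $\mathfrak{c}$. As $\Pic_\Q(\mathcal{M}'_g) = \Q\lambda \oplus \Q\delta_0$, it remains only to find the two rational coefficients of each of $\mu_\ast\mathfrak{a}_0$, $\mu_\ast\mathfrak{b}_0$, $\mu_\ast\mathfrak{c}_0$ in the basis $(\lambda, \delta_0)$; these reappear as the $\lambda$- and $\RgForgetfulMap^\ast(\delta_0)$-coefficients in the statement.

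Next I would use that the Brill--Noether number $g - (r+1)(g-d+r)$ vanishes for both triples, so that $\mu$ is dominant and generically finite, of degree equal to the Castelnuovo number ($5$ in genus $6$, $14$ in genus $8$). On such a space the classes $\mathfrak{a}_0, \mathfrak{b}_0, \mathfrak{c}_0$ are precisely the tautological classes studied in \autocite{Farkas2009}. One relation among them is the Grothendieck--Riemann--Roch identity for $\UnivCrdTorsMorph$ applied to $\PoincareBundle$ without the torsion twist,
\begin{equation*}
  \mathfrak{c} - c_1\bigl( R^1 \UnivCrdTorsMorph_\ast \PoincareBundle \bigr) = \lambda + \tfrac{1}{2}\mathfrak{a} - \tfrac{1}{2}\mathfrak{b}
\end{equation*}
in which $R^1 \UnivCrdTorsMorph_\ast \PoincareBundle$ is locally free of rank $g - d + r = 2$ by Petri-generality (unlike $R^1 \UnivCrdTorsMorph_\ast(\PoincareBundle \otimes \PrymPoincare)$), and the same relation holds downstairs for $\mathfrak{a}_0, \mathfrak{b}_0, \mathfrak{c}_0$. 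A further tool is the residuation isomorphism $R^1 \UnivCrdTorsMorph_\ast \PoincareBundle \cong \bigl( \UnivCrdTorsMorph_\ast(\omega_{\UnivCrdTorsMorph} \otimes \PoincareBundle^{-1}) \bigr)^\vee$, which identifies the computation with the analogous one for the spaces of pencils $\UnivGrd{1}{4}$ over $\Mg{6}$ and $\UnivGrd{1}{5}$ over $\Mg{8}$. The remaining coefficients are then pinned down by intersecting $\mathfrak{a}_0, \mathfrak{b}_0, \mathfrak{c}_0$ with two kinds of test curves in $\mathcal{M}'_g$: a general pencil inside a fixed surface --- a Lefschetz pencil on a polarized $K3$, on which all $\mathfrak{g}^r_d$'s behave generically, or in genus $6$ the explicit rational family of four-nodal plane sextics --- which together with $\lambda$ and $\delta_0$ of that family gives the $\lambda$-coefficient; and a general pencil degenerating to a Petri-general irreducible one-nodal curve of $\BoundaryPCZero$ (whose normalization with its two branch points is a Petri-general element of $\Mg{g-1,2}$), where the enumeration of limit linear series and their vanishing sequences, via the Eisenbud--Harris theory of limit linear series, gives the $\delta_0$-coefficient. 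Substituting the two numerical triples --- and using that in genus $6$ this is the computation already performed in \autocite{FGSV2014} (compare also \autocite{FL2010}) --- yields the six stated expressions.

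The main obstacle is the $\delta_0$-coefficient: controlling exactly how each tautological class meets the boundary amounts to computing the ramification of $\mu$ over $\Delta_0$, which is where the limit-linear-series transversality is essential and is the reason for working over the Petri-general partial compactifications $\RgPrime{g,\ell}$ and $\mathcal{M}'_g$ from the outset. One also has to verify that $\UnivCrdTorsMorph_\ast\PoincareBundle$, $\UnivCrdTorsMorph_\ast\PoincareBundle^{\otimes 2}$ and $R^1\UnivCrdTorsMorph_\ast\PoincareBundle$ are locally free in codimension $\le 1$ on $\UnivGrdTors{r}{d}{\ell}$ away from $\Delta_0''$ (which is arranged by the definition of $\RgPrime{g,\ell}$), so that $\mathfrak{a}, \mathfrak{b}, \mathfrak{c}$ are genuinely defined and the identity $\UnivGrdTorsMorph_\ast(\mathfrak{a}) = \RgForgetfulMap^\ast(\mu_\ast\mathfrak{a}_0)$ is legitimate. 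Finally, $\mathfrak{a}$, $\mathfrak{b}$ and $\mathfrak{c}$ individually depend on the normalization of $\PoincareBundle$; one fixes a choice once and for all as in \autocite{Farkas2009}, observing that in the combination entering the degeneracy class \eqref{eq:degenclass} the effect of a twist $\PoincareBundle \mapsto \PoincareBundle \otimes \UnivCrdTorsMorph^\ast M$ has coefficient $(2r-3)\cdot 2d - (r-2)(2g-2) - (r^2+r-2)(r+1)$, which vanishes for both $(g,r,d)$, so the final divisor class is unambiguous.
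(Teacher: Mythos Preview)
Your proposal is correct and follows the same route as the paper, which simply cites Lemma~2.6, Lemma~2.13 and Proposition~2.12 of \autocite{Farkas2009}; you have essentially unpacked what those references contain (base change to $\Mg{g}$ so that only $\mathfrak{a}_0,\mathfrak{b}_0,\mathfrak{c}_0$ matter, the GRR relation, and the test-curve and limit-linear-series enumerations that pin down the $\lambda$- and $\delta_0$-coefficients).
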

\begin{proof}
  Use the machinery of \autocite{Farkas2009}, in particular Lemma 2.6, Lemma 2.13 and Proposition 2.12.
\end{proof}
\begin{remark}
  A different choice of Poincaré bundle $\PoincareBundle$ affects
  the classes $\mathfrak{a}$, $\mathfrak{b}$ and $\mathfrak{c}$.  However,
  the class of the degeneracy locus of $\DegenMorph_{g,\ell}$ is independent of this choice
  (see the discussion before Theorem 2.1 in \autocite{Farkas2009}).
\end{remark}
Now we only need to pushforward $[Z_1(\DegenMorph_{g,\ell})]$ by $\UnivGrdTorsMorph$ to $\RgPrime{g,\ell}$,
which has the effect of multiplying the coefficients of $\lambda$, $\delta_0''$ and $\delta_0^{(a)}$ in \eqref{eq:degenclass} by
the degree of $\UnivGrdTorsMorph$.  This degree is $5$ in the case of $g = 6$
and $14$ in the case of $g = 8$ (the respective number of $\mathfrak{g}^r_d$
on the general curve).  Plug in the expressions of Lemma
\ref{lem:class-expressions} to obtain:
\begin{theorem}
  The class of the degeneracy locus $\UnivGrdTorsMorph_\ast Z_1(\DegenMorph_{6,\ell})$ is 
  \begin{equation*}
    [\MukaiDivBar{6}{\ell}]^\virt = 35 \lambda - 5 (\delta_0' + 3\delta_0'')
    - \frac{5}{\ell} \sum_{a = 1}^{\lfloor \ell/2 \rfloor}( \ell^2 - a\ell + a^2 )\delta_0^{(a)}
  \end{equation*}
\end{theorem}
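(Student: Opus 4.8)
The plan is a direct computation assembling ingredients already in place. First I specialize \eqref{eq:degenclass} to the genus-$6$ parameters $r = 2$, $d = 6$: there $r - 3 = -1$, $2r - 3 = 1$, $r - 2 = 0$ and $r^2 + r - 2 = 4$, so $\mathfrak{b}$ drops out and
\begin{equation*}
  [Z_1(\DegenMorph_{6,\ell})] = -\lambda + \mathfrak{a} - 4\,\mathfrak{c} - 2\,\mathfrak{d} + \rho
\end{equation*}
in $A^1(\UnivGrdTors{2}{6}{\ell})$.

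Next I push forward along $\UnivGrdTorsMorph$. The classes $\lambda$, $\rho$ and $\mathfrak{d}$ are pulled back from $\RgPrime{6,\ell}$: the lemma computing $h^0(C, L\otimes\eta)$ along $\Delta_0''$ shows that $R^1\UnivCrdTorsMorph_\ast(\PoincareBundle\otimes\PrymPoincare)$ is a rank-$1$ sheaf supported on $\Delta_0''$, and likewise for $\PrymPoincare^{-1}$ since $\nu^\ast\eta^{-1}$ is trivial along $\Delta_0''$ as well; hence $\mathfrak{d} = \delta_0''$. By the projection formula $\UnivGrdTorsMorph_\ast$ sends each of $\lambda$, $\rho$, $\delta_0''$ to $(\deg\UnivGrdTorsMorph)$ times itself, with $\deg\UnivGrdTorsMorph = 5$, the number of $\mathfrak{g}^2_6$ on a general genus-$6$ curve; for $\mathfrak{a}$ and $\mathfrak{c}$ I substitute Lemma \ref{lem:class-expressions}. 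Collecting terms — the $\lambda$-coefficient is $-5 - 93 + 4 \cdot \tfrac{133}{4} = 35$ and that of $\RgForgetfulMap^\ast\delta_0$ is $\tfrac{23}{2} - 4 \cdot \tfrac{33}{8} = -5$ — this gives
\begin{equation*}
  \UnivGrdTorsMorph_\ast[Z_1(\DegenMorph_{6,\ell})] = 35\,\lambda - 5\,\RgForgetfulMap^\ast\delta_0 - 10\,\delta_0'' + 5\rho.
\end{equation*}

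Finally I rewrite $\RgForgetfulMap^\ast\delta_0$ in terms of the boundary classes of $\RgPrime{6,\ell}$ via the pull-back formula $\RgForgetfulMap^\ast\delta_0 = \delta_0' + \delta_0'' + \ell\sum_{a = 1}^{\lfloor\ell/2\rfloor}\delta_0^{(a)}$ of \autocite{CEFS2013}. The two $\delta_0''$-contributions combine to $-15\,\delta_0''$, and the coefficient of $\delta_0^{(a)}$ becomes $-5\ell + \tfrac{5a(\ell - a)}{\ell} = -\tfrac{5}{\ell}(\ell^2 - a\ell + a^2)$, which is the claimed formula for $[\MukaiDivBar{6}{\ell}]^\virt$.

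I do not expect a genuine obstacle: the substance is in the earlier lemmas and the cited results. The one delicate point is the behaviour over $\Delta_0''$ and the $\Delta_0^{(a)}$ — that $\mathfrak{d} = \delta_0''$ (the $R^1$-rank is exactly one) and that $\UnivGrdTorsMorph$ has degree $5$ over all of those boundary divisors — so that the boundary coefficients of the push-forward are as read off above. This is exactly what the definition of the partial compactification $\RgPrime{6,\ell}$ (Petri-general normalizations, and $H^1(C, L\otimes\eta) = 0$ for $L \in W^2_6$) together with the Chern-class machinery of \autocite{Farkas2009} behind Lemma \ref{lem:class-expressions} are set up to ensure.
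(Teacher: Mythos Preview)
Your argument is correct and is essentially the paper's own: specialize \eqref{eq:degenclass} to $r=2$, substitute the values of Lemma~\ref{lem:class-expressions}, and push forward along the degree-$5$ map $\UnivGrdTorsMorph$. The paper compresses this into a single sentence (``plug in the expressions of Lemma~\ref{lem:class-expressions} to obtain''), whereas you spell out the intermediate arithmetic, the pull-back formula for $\RgForgetfulMap^\ast\delta_0$, and the identification $\mathfrak{d}=\delta_0''$; the latter the paper also uses implicitly to obtain the $-15\,\delta_0''$ coefficient, having only argued that $R^1\UnivCrdTorsMorph_\ast(\PoincareBundle\otimes\PrymPoincare)$ is rank~$1$ over $\Delta_0''$.
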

\begin{theorem}
  The class of the degeneracy locus $\UnivGrdTorsMorph_\ast Z_1(\DegenMorph_{8,\ell})$ is 
  \begin{equation*}
    [\MukaiDivBar{8}{\ell}]^\virt = 196 \lambda - 28 (\delta_0' + 2\delta_0'')
    - \frac{14}{\ell} \sum_{a = 1}^{\lfloor \ell/2 \rfloor} (2\ell^2 - a\ell + a^2) \delta_0^{(a)}
  \end{equation*}
\end{theorem}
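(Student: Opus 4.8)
The plan is to derive the stated class purely as a pushforward of the expression \eqref{eq:degenclass} for $[Z_1(\DegenMorph_{8,\ell})]$ along the forgetful map $\UnivGrdTorsMorph\colon \UnivGrdTors{3}{9}{\ell}\to\RgPrime{8,\ell}$. Since $\UnivGrdTorsMorph$ is generically finite of degree $14$ (the number of $\mathfrak{g}^3_9$ on a general genus $8$ curve), the projection formula tells us that $\UnivGrdTorsMorph_\ast$ multiplies the coefficient of any class pulled back from $\RgPrime{8,\ell}$ by $14$; in \eqref{eq:degenclass} this covers $\lambda$ and $\rho = \sum_a \frac{a(\ell-a)}{\ell}\delta_0^{(a)}$. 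For the three genuinely relative classes $\mathfrak{a}$, $\mathfrak{b}$, $\mathfrak{c}$, I would substitute their pushforwards from Lemma \ref{lem:class-expressions}, which are expressed in terms of $\lambda$ and $\RgForgetfulMap^\ast(\delta_0)$. Finally I would expand $\RgForgetfulMap^\ast(\delta_0) = \delta_0' + \delta_0'' + \ell\sum_{a=1}^{\lfloor\ell/2\rfloor}\delta_0^{(a)}$ (the boundary pullback of \autocite{CEFS2013}; the factor $\ell$ is the ramification index of $\RgForgetfulMap$ along $\Delta_0^{(a)}$, specializing to the classical $\delta_0' + \delta_0'' + 2\delta_0^{\mathrm{ram}}$ for $\ell = 2$), collect coefficients and compare with the claimed class.

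The one term in \eqref{eq:degenclass} neither pulled back nor handled by Lemma \ref{lem:class-expressions} is $\mathfrak{d} = c_1(R^1\UnivCrdTorsMorph_\ast(\PoincareBundle\otimes\PrymPoincare))$, so the second step is to compute $\UnivGrdTorsMorph_\ast(\mathfrak{d})$. By the jump computation above, $h^0(C, L\otimes\eta)$ is constant equal to its generic value $1$ away from $\Delta_0''$ (over $\RgPCZero{8,\ell}$ this is the defining $H^1$-vanishing, and over $\Delta_0'$ and the $\Delta_0^{(a)}$ one has $\nu^\ast\eta\neq\mathcal{O}_{\widetilde C}$, so no jump), while it equals $3$ on fibres over $\Delta_0''$; hence $R^1\UnivCrdTorsMorph_\ast(\PoincareBundle\otimes\PrymPoincare)$ is a rank-one sheaf set-theoretically supported on $\UnivGrdTorsMorph^{-1}(\Delta_0'')$. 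I would strengthen this to the statement that it is the pushforward of a rank-one sheaf from the \emph{reduced} divisor $\UnivGrdTorsMorph^{-1}(\Delta_0'')$, by running the exact sequences from the proof of the jump lemma over a one-parameter base transverse to $\Delta_0''$ and checking that the relevant cokernel has length exactly one. This gives $\mathfrak{d} = [\UnivGrdTorsMorph^{-1}(\Delta_0'')]$ and therefore $\UnivGrdTorsMorph_\ast(\mathfrak{d}) = 14\,\delta_0''$.

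Putting $r = 3$ in \eqref{eq:degenclass}, so that $[Z_1(\DegenMorph_{8,\ell})] = 3\mathfrak{a} - \mathfrak{b} - 10\mathfrak{c} - 2\mathfrak{d} + \rho$, the coefficient of $\lambda$ becomes $3(-267) - 3 - 10(-100) = 196$; the coefficient of $\RgForgetfulMap^\ast(\delta_0)$ produced by $\mathfrak{a},\mathfrak{b},\mathfrak{c}$ is $3\cdot\tfrac{69}{2} - \tfrac{3}{2} - 10\cdot 13 = -28$, contributing $-28\bigl(\delta_0' + \delta_0'' + \ell\sum_a\delta_0^{(a)}\bigr)$; adding $-2\UnivGrdTorsMorph_\ast(\mathfrak{d}) = -28\,\delta_0''$ and $14\rho = 14\sum_a\frac{a(\ell-a)}{\ell}\delta_0^{(a)}$ leaves coefficient $-28$ on $\delta_0'$, $-56$ on $\delta_0''$, and on each $\delta_0^{(a)}$ the coefficient
\[
  -28\ell + 14\,\frac{a(\ell-a)}{\ell} = -\frac{14}{\ell}\bigl(2\ell^2 - a\ell + a^2\bigr),
\]
which is exactly $[\MukaiDivBar{8}{\ell}]^\virt$. (The genus $6$ theorem follows from the identical bookkeeping with $r = 2$, degree $5$, and the genus $6$ entries of Lemma \ref{lem:class-expressions}.)

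The main obstacle is the identification of $\mathfrak{d}$ in the second step. Knowing only that $R^1\UnivCrdTorsMorph_\ast(\PoincareBundle\otimes\PrymPoincare)$ is a rank-one sheaf supported on $\Delta_0''$ does not by itself determine $\mathfrak{d}$: one must rule out a scheme-theoretic thickening (so that $\mathfrak{d}$ is $\delta_0''$ and not $m\,\delta_0''$ with $m\ge 2$) and confirm that there is no hidden contribution over the other boundary strata. This is precisely the local degeneration analysis that has to be carried out, and it is also the only place where the virtual pushed-forward class $[\MukaiDivBar{8}{\ell}]^\virt$ can differ from the honest divisor $\MukaiDivBar{8}{\ell}$ along the boundary — a discrepancy taken up in the subsequent degeneracy calculation.
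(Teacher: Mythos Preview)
Your approach is correct and matches the paper's own (terse) derivation: push forward \eqref{eq:degenclass} via Lemma \ref{lem:class-expressions} and the degree $14$ of $\UnivGrdTorsMorph$, together with the identification $\mathfrak{d} = \UnivGrdTorsMorph^\ast\delta_0''$ --- a point you in fact justify more carefully than the paper does. One harmless slip: for $g=8$ the generic value of $h^0(C, L\otimes\eta)$ is $2$ (since $\chi(L\otimes\eta)=d-g+1=2$), not $1$, so the jump on $\Delta_0''$ is from $2$ to $3$; but $R^1\UnivCrdTorsMorph_\ast(\PoincareBundle\otimes\PrymPoincare)$ is still rank~$1$ there and your conclusion $\UnivGrdTorsMorph_\ast\mathfrak{d}=14\,\delta_0''$ stands.
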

In particular, since
$\UnivGrdTorsMorph_\ast Z_1(\DegenMorph_{g,\ell}) \cap \Rg{g,\ell} = \MukaiDiv{g}{\ell}$,
the class $[\MukaiDivBar{g}{\ell}]^\virt - n[\MukaiDivBar{g}{\ell}]$ is effective
and entirely supported on the boundary of $\RgPrime{g,\ell}$ for some $n \geq 1$.
\begin{remark}
  The morphism $\DegenMorph_{g,\ell}$ is degenerate over the boundary component $\Delta_0''$,
  with order $1$ for $g = 6$ and order $2$ for $g = 8$.
  We can therefore subtract an additional $5 \delta_0''$ and $28 \delta_0''$, respectively.
\end{remark}
\begin{remark}
  The coefficients appearing in the expression of $\MukaiDivBar{6}{\ell}$ are divisible
  by $5$, which is exactly the degree of the map $\UnivGrdTorsMorph\colon \UnivGrdTors{2}{6}{\ell} \rightarrow \RgPrime{6,\ell}$.
  This can be explained by observing that the boundary morphism \eqref{eq:boundary-morphism}
  fails to be an isomorphism for some $A \in W^1_4(C)$ if and only if $H^0(C, E_C \otimes \eta) \not= 0$.
  But since $E_C$ does not depend on the choice of $A$, the morphism surprisingly fails to be bijective
  for \emph{all} $A \in W^1_4(C)$.

  Similarly, the coefficients for $\MukaiDivBar{8}{\ell}$ are divisible by $28 = 2 \cdot 14$,
  where $14 = \deg(\UnivGrdTorsMorph)$.  Observe that by Serre duality, $\chi(E_C \otimes \eta) = 0$,
  and by the isomorphism $E_C^\vee \otimes \omega_C \cong E_C$, we have $H^0(C, E_C \otimes \eta) = 0$
  if and only if $H^0(C, E_C \otimes \eta^{-1}) = 0$.  This explains the additional factor of two.
\end{remark}
\begin{dummyversion}
  \begin{corollary}
    The pushforward of the class of $\MukaiDivBar{6}{2}$ to $\MgBar{6}$ has class
    \begin{equation*}
      143325\cdot \lambda - 17930 \cdot \delta_0
    \end{equation*}
    and slope barely less than $8$.  This shows that the trigonal locus
    is contained in this divisior.

    For $\ell = 3$ we have the pushforward class
    \begin{equation*}
      18600400 \cdot \lambda - 226357 \cdot \delta_0
    \end{equation*}
    which has slope $8.217\dots$.

    For $g = 8$ and $\ell = 2$ the pushforward has class
    \begin{equation*}
      12844860\cdot \lambda - 1720348 \cdot \delta_0
    \end{equation*}
    and for $\ell = 3$:
    \begin{equation*}
      8437157120 \cdot \lambda - 1116026184 \cdot \delta_0
    \end{equation*}
    So for $\ell = 2,3$ this contains the trigonal locus.
  \end{corollary}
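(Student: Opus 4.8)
The plan is to deduce the corollary from the two theorems computing $[\MukaiDivBar{g}{\ell}]$, taken in the improved form of the preceding remark (i.e.\ with the extra $5\delta_0''$, resp.\ $28\delta_0''$, already subtracted), by pushing the classes forward along the forgetful map $\RgForgetfulMap\colon \RgBar{g,\ell}\to\MgBar{g}$, and then running a test-curve argument for the trigonal locus.

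First I would record the pushforward of the generators. Since $\RgPrime{g,\ell}$ maps onto the partial compactification $\Mg{g}\cup\Delta_0$, whose rational Picard group is freely generated by $\lambda$ and $\delta_0$, it is enough to know the mapping degrees over $\Delta_0$, and these are exactly the counts assembled in Section \ref{subsec:rgl-boundary}. Concretely $\RgForgetfulMap$ has degree $\ell^{2g}-1$, so $\RgForgetfulMap_\ast\lambda=(\ell^{2g}-1)\lambda$; furthermore $\RgForgetfulMap_\ast\delta_0'=\ell(\ell^{2g-2}-1)\delta_0$, $\RgForgetfulMap_\ast\delta_0''=(\ell-1)\delta_0$, and $\RgForgetfulMap_\ast\delta_0^{(a)}=2\ell^{2g-2}\delta_0$ when $a\neq\ell/2$ (and $\ell^{2g-2}\delta_0$ when $a=\ell/2$, which for prime $\ell$ only occurs for $\ell=2$). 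Substituting these together with the improved genus $6$ class
\[
  [\MukaiDivBar{6}{\ell}] = 35\lambda - 5\delta_0' - 20\delta_0'' - \frac{5}{\ell}\sum_{a=1}^{\lfloor \ell/2\rfloor}(\ell^2 - a\ell + a^2)\delta_0^{(a)}
\]
and the analogous one in genus $8$, and then setting $\ell=2,3$, yields the four displayed classes; the arithmetic is mechanical (for instance $35(2^{12}-1)=143325$ and $5\cdot2046+20+\tfrac{15}{2}\cdot1024=17930$). The quoted slopes $143325/17930\approx 7.994$, $\approx 8.217$ in genus $6$ and $\approx 7.47$, $\approx 7.56$ in genus $8$ follow at once.

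For the inclusion of the trigonal locus I would use a covering curve. Fix a general trigonal curve of the given genus on a Hirzebruch surface $\mathbb{F}_n$ (with $n$ its Maroni invariant), and let $\gamma\subset\MgBar{g}$ be the one-parameter family cut out by a general pencil in the relevant linear system on $\mathbb{F}_n$; after blowing up the base points one gets a fibred surface over $\mathbb{P}^1$ all of whose singular fibres are irreducible one-nodal (and still trigonal), so $\gamma$ lies inside $\Mg{g}\cup\Delta_0$ and meets only $\lambda$ and $\delta_0$, with $\lambda\cdot\gamma$ and $\delta_0\cdot\gamma$ computed in the standard way from $c_1^2$ and $c_2$ of the surface. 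One then checks that $\RgForgetfulMap_\ast[\MukaiDivBar{g}{\ell}]\cdot\gamma<0$, which forces $\gamma$ into the divisor; since a general trigonal curve lies on such a $\gamma$ and the trigonal locus is irreducible, closedness of $\MukaiDivBar{g}{\ell}$ gives that it contains the preimage in $\RgBar{g,\ell}$ of the trigonal locus.

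The step I expect to be the real obstacle is the sign of that last intersection number in genus $6$. A general Hirzebruch pencil of trigonal genus $6$ curves has $\delta_0\cdot\gamma = 8\,(\lambda\cdot\gamma)$, so the intersection equals $(143325 - 8\cdot 17930)(\lambda\cdot\gamma) = -115\,(\lambda\cdot\gamma)<0$ for $\ell=2$, but $(18600400 - 8\cdot 2263570)(\lambda\cdot\gamma)>0$ for $\ell=3$: the bare inequality $a/b<8$ on the slope is not enough there. For $\ell=3$ one therefore needs a family through a general trigonal curve whose $\delta_0/\lambda$ ratio exceeds $8.217$ — a pencil with the non-generic Maroni invariant, or a more degenerate one-parameter family, or (since the curve need only pass through a general trigonal point, not lie in the trigonal locus) a suitable higher-slope curve through it. Producing this family and verifying its numerics is the crux; by contrast, in genus $8$ the slopes $\approx 7.5$ sit comfortably below any trigonal-pencil slope, so the argument there is routine, as is all of the boundary-degree and Chern-class bookkeeping.
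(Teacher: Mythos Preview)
Your pushforward computation is correct and matches what the paper has in mind (the corollary is stated without proof); you have in fact caught a typo, since the genus $6$, $\ell=3$ coefficient of $\delta_0$ should be $2263570$, not $226357$, as your slope check confirms. The test--curve argument via a Lefschetz pencil of trigonal curves on a Hirzebruch surface is also the right one, and your numerics $\delta_0/\lambda=8$ for $g=6$ and $\delta_0/\lambda=62/8=7.75$ for $g=8$ are correct.

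The only genuine error is a misreading of the statement. The corollary does \emph{not} assert that the trigonal locus is contained in the divisor for $g=6$, $\ell=3$; it merely records the slope $8.217\ldots$ and moves on. The sentence ``So for $\ell=2,3$ this contains the trigonal locus'' refers to the immediately preceding $g=8$ computations only. Once you parse it this way, your ``real obstacle'' evaporates: in every case where containment \emph{is} claimed, namely $(g,\ell)=(6,2)$ and $(g,\ell)=(8,2),(8,3)$, the divisor slope ($\approx 7.994$, $\approx 7.47$, $\approx 7.56$) sits strictly below the trigonal--pencil slope ($8$ and $7.75$), so the intersection with the covering pencil is negative and the standard argument goes through. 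No exotic higher--slope family is needed.
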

\end{dummyversion}

\section{Application to the birational geometry of modular varieties}
\label{sec:application}

\subsection{An improvement of existing divisor classes}

Recall the following result:
\begin{theorem}[\autocite{CEFS2013}, Theorem 0.7]
  \label{thm:cefs-dgl}
  Set $g = 2i + 2 \geq 4$ and $\ell \geq 3$ such that
  $i \equiv 1 \mod 2$ or $\binom{2i-1}{i} \equiv 0 \mod 2$.
  The virtual class of the closure in $\RgPrime{g,\ell}$
  of the locus $\mathcal{D}_{g,\ell}$ of level $\ell$ curves
  $[C, \eta] \in \Rg{g,\ell}$ such that
  $K_{i,1}(C; \eta^{\otimes (\ell - 2)}, K_C \otimes \eta) \not= 0$ is equal to
  \begin{align*}
    [\overline{\mathcal{D}}_{g,\ell}]^\virt = \frac{1}{i-1}
    \binom{2i-2}{i}
    \bigg( &
      (6i + 1) \lambda - i(\delta_0' + \delta_0'') \\
      & - \frac{1}{\ell} \sum_{a = 1}^{\lfloor \frac{\ell}{2} \rfloor}
      (i\ell^2 + 5a^2i - 5ai\ell - 2a^2 + 2a\ell) \delta_0^{(a)}
    \bigg)
  \end{align*}
\end{theorem}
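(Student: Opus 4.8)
The plan is to realise the closure $\overline{\mathcal{D}}_{g,\ell}$ of $\mathcal{D}_{g,\ell}$ in $\RgPrime{g,\ell}$ as a component of a determinantal locus and to compute its class by Porteous' formula and Grothendieck--Riemann--Roch, following the syzygy-divisor technique of \autocite{Farkas2009} in the spirit of Section~\ref{sec:divisor-classes}; this is essentially the argument of \autocite{CEFS2013}. Write $g = 2i+2$, $L = K_C\otimes\eta$ and $B = \eta^{\otimes(\ell-2)}$. For a general $[C,\eta]$ one has $h^0(C,L) = h^0(C,B\otimes L) = g-1$, $h^0(C,B) = 0$ and $h^0(C,B\otimes L^{\otimes 2}) = 3g-3$, so the Koszul complex computing $K_{i,1}(C;B,L)$ collapses to $0 \to K_{i,1}(C;B,L) \to \bigwedge^i H^0(L)\otimes H^0(B\otimes L) \to \bigwedge^{i-1}H^0(L)\otimes H^0(B\otimes L^{\otimes 2})$. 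I would first introduce the kernel bundle $M_L$ of rank $g-2 = 2i$, defined by $0 \to M_L \to H^0(L)\otimes\mathcal{O}_C \to L \to 0$, and use the usual identification of Koszul cohomology with cohomology of exterior powers of $M_L$ to rewrite $K_{i,1}(C;B,L) \cong H^0\big(C,\, \bigwedge^i M_L\otimes B\otimes L\big)$. A Riemann--Roch count — where the symmetry $\binom{2i-1}{i} = \binom{2i-1}{i-1}$ is the crucial point — shows that the Euler characteristic $\chi\big(C,\bigwedge^i M_L\otimes B\otimes L\big)$ vanishes, so that the non-vanishing of this $H^0$, equivalently of $K_{i,1}$, is a condition of codimension at most one cut out by the vanishing of a determinant.

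Next I would work over the partial compactification $\RgPrime{g,\ell}$ of Section~\ref{sec:divisor-classes}, using on the universal curve the tautologically available line bundle $L = \omega\otimes\PrymPoincare$ — the universal version of $K_C\otimes\eta$, with $\omega$ the relative dualising sheaf and $\PrymPoincare$ the universal $\ell$-torsion bundle; crucially, no auxiliary moduli space of linear series is needed here, since $h^0(K_C\otimes\eta) = g-1$ is constant on $\RgPrime{g,\ell}$. I would build the relative kernel bundle $\mathcal{M}$ and its $i$-th exterior power (of rank $\binom{2i}{i}$), so that $\overline{\mathcal{D}}_{g,\ell}$ becomes the locus where the derived pushforward of $\bigwedge^i\mathcal{M}\otimes B\otimes L$ is nonzero — equivalently, since the Euler characteristic vanishes, the first degeneracy locus of a suitable morphism of vector bundles of equal rank built from the globalised Koszul data. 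Porteous' formula then reduces $[\overline{\mathcal{D}}_{g,\ell}]^\virt$ to a difference $c_1(\mathcal{F}) - c_1(\mathcal{E})$ of first Chern classes, and Grothendieck--Riemann--Roch together with the Chern-class machinery of \autocite{Farkas2009} and the formula of \autocite{CEFS2013}, Proposition~1.6, for $c_1$ of pushforwards of twists by $\PrymPoincare$ yields the displayed combination of $\lambda$, $\delta_0'$, $\delta_0''$ and the $\delta_0^{(a)}$. The numerical factor $\tfrac{1}{i-1}\binom{2i-2}{i}$ and the parity hypothesis on $i$ and $\binom{2i-1}{i}$ emerge from the rank bookkeeping for $\bigwedge^i\mathcal{M}$ in this computation (the hypothesis ensures, roughly, that the determinantal set-up can be carried out directly on $\RgPrime{g,\ell}$); for $g = 8$, i.e.\ $i = 3$, it holds since $i$ is odd.

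The main obstacle is the boundary analysis. First one must verify that the pushforward sheaves involved are locally free away from codimension $2$, so that the relevant morphism genuinely is a morphism of vector bundles of equal rank and Porteous applies; as in Section~\ref{sec:divisor-classes}, $h^0$ of the twist by $\PrymPoincare$ jumps over $\Delta_0''$, and the resulting correction must be accounted for. Secondly, and more delicately, one must compute the contribution of $\PrymPoincare$ to the $\delta_0^{(a)}$-coefficients: over $\Delta_0^{(a)}$ the level structure at the node is of \enquote{type $a$}, the universal torsion bundle degenerates to the corresponding type-$a$ twist on the normalisation of the generic one-nodal fibre, and the quadratic dependence on $a$ of the resulting coefficient reflects how the Koszul differential — together with the classes of the pushforwards of $\omega\otimes\PrymPoincare^{\pm 1}$ — degenerates along each such stratum. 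Once these boundary terms are pinned down, assembling the class is routine, and — exactly as in Section~\ref{sec:divisor-classes} — one records that the virtual class $[\overline{\mathcal{D}}_{g,\ell}]^\virt$ and the genuine class $[\overline{\mathcal{D}}_{g,\ell}]$ differ by an effective class supported on the boundary, which is what makes the formula usable for Kodaira-dimension estimates.
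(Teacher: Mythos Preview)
Your proposal is correct and follows essentially the same approach as the paper's sketch (which in turn recapitulates \autocite{CEFS2013}): identify $K_{i,1}(C;\eta^{\otimes(\ell-2)},K_C\otimes\eta)$ with $H^0\big(C,\wedge^i M_{K_C\otimes\eta}\otimes K_C\otimes\eta^{-1}\big)$, globalise the resulting equal-rank map over $\RgPrime{g,\ell}$, and compute via Porteous and Grothendieck--Riemann--Roch using \autocite{CEFS2013}, Proposition~1.6. The only point where you are slightly off is the role of the parity hypothesis: it is not needed to set up the determinantal description, but rather to guarantee that the degeneracy locus is a genuine divisor (i.e.\ that $K_{i,1}$ vanishes generically), which is a Prym--Green type statement proved in \autocite{CEFS2013}.
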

Here $K_{i,j}(C, \eta^{\otimes (\ell - 2)}, K_C \otimes \eta)$ is the Koszul cohomology
group defined in \autocite{Green1984}.
We quickly sketch how this result was obtained.
For a globally generated vector bundle $E$ on $C$, let $M_E$
be the kernel of the surjective evaluation map
\begin{equation*}
  H^0(C, E) \otimes \mathcal{O}_C \rightarrow E \rightarrow 0
\end{equation*}
Then by standard arguments, e.g.\@ \autocite{AN2010},
we have an identification
\begin{equation*}
  K_{i,1}\big(C; \eta^{\otimes (\ell - 2)}, K_C \otimes \eta\big)
  = H^0\big(C, \wedge^i M_{K_C \otimes \eta} \otimes K_C \otimes \eta^{-1}\big)
\end{equation*}
This in turn can be identified with the kernel of the map
\begin{equation*}
  \wedge^i H^0(C, K_C \otimes \eta) \otimes H^0(C, K_C \otimes \eta^{-1})
  \rightarrow H^0(C, \wedge^{i-1} M_{K_C \otimes \eta} \otimes K_C^{\otimes 2})
\end{equation*}
Note that the domain and the target are vector spaces of the same dimension.
This map is then globalized to a map $\chi$ between vector bundles
of the same rank over $\RgPrime{g,\ell}$ and its first degeneracy locus
can be calculated using Porteous' formula to obtain the class of Theorem \ref{thm:cefs-dgl}.

We will now show that $\chi$ is degenerate along the boundary divisors $\Delta_0^{(a)}$
by calculating a lower bound on the dimension of
$V \coloneqq H^0(X, \wedge^i M_{\omega_X \otimes \eta} \otimes \omega_X \otimes \eta^{-1})$
for a general curve $[X, \eta] \in \Delta_0^{(a)}$.  The result does
not depend on $a$.

Let $X = C \cup_{p,q} E$ where $E \cong \PP^1$ is exceptional.
Observe that $\omega_X|_E = \mathcal{O}_E$ while $\omega_X|_C = K_C(p+q)$.
One then calculates that
\begin{equation*}
  M_{\omega_X \otimes \eta}|_C = M_{K_C(p+q) \otimes \eta_C}
  \text{ and }
  M_{\omega_X \otimes \eta}|_E = \mathcal{O}_E(-1) \oplus \mathcal{O}_E^{\oplus (g - 3)}
\end{equation*}
We let $M \coloneqq M_{K_C(p+q) \otimes \eta_C}$.
By the Mayer--Vietoris sequence $V$ is the kernel of
\begin{align*}
  H^0\left( C, \wedge^i M \otimes K_C(p+q) \otimes \eta_C^{-1} \right)
  \oplus
  H^0\big( E, \wedge^i(\mathcal{O}_E(-1) \oplus \mathcal{O}_E^{\oplus(g-3)}) \otimes \mathcal{O}_E(-1) \big)\\
  \rightarrow H^0\big( \wedge^i M_{\omega_X \otimes \eta} \otimes \omega_X \otimes \eta^{-1} \big|_{p+q} \big)
\end{align*}
Since $\wedge^i M_{\omega_X \otimes \eta}$ has rank $\binom{2i}{i}$,
the latter space has dimension $2\cdot\binom{2i}{i}$,
while the bundle on $E$ has no sections.  Using Riemann--Roch, we calculate
\begin{align*}
  h^0\left( C, \wedge^i M \otimes K_C(p+q) \otimes \eta_C^{-1} \right)
  & \geq -(2g - 3)\binom{2i-1}{i-1} + \binom{2i}{i}(2g - 1)
    + \binom{2i}{i}(2 - g)\\
  & = 5 \binom{2i-1}{i-1}
\end{align*}
hence the kernel has dimension at least
\begin{equation*}
  5 \binom{2i-1}{i-1} - 2\binom{2i}{i} = \binom{2i-1}{i-1}
\end{equation*}
and therefore $\chi$ is degenerate to this order on the
boundary $\Delta_0^{\mathrm{ram}}$.  We have proved:
\begin{proposition}
  The divisor class
  $[\overline{\mathcal{D}}_{g,\ell}]^\virt
  - \binom{2i-1}{i-1} \sum_{a = 1}^{\lfloor \ell/2 \rfloor} \delta_0^{(a)}$
  is effective.
\end{proposition}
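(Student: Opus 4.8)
The plan is to use the determinantal description of $\overline{\mathcal{D}}_{g,\ell}$ just set up. By Porteous we have $[\overline{\mathcal{D}}_{g,\ell}]^\virt = [Z_1(\chi)]$, and since $\chi\colon \mathcal{E}\to\mathcal{F}$ is a morphism of vector bundles of the same rank $N$ on $\RgPrime{g,\ell}$, the degeneracy scheme $Z_1(\chi)$ is the vanishing scheme of the section $\det\chi\in H^0(\RgPrime{g,\ell},\det\mathcal{F}\otimes\det\mathcal{E}^\vee)$. So it is enough to show that for each $a=1,\dots,\lfloor\ell/2\rfloor$ the section $\det\chi$ vanishes to order at least $\binom{2i-1}{i-1}$ along the prime divisor $\Delta_0^{(a)}$: since the $\Delta_0^{(a)}$ are pairwise distinct, this yields that $[Z_1(\chi)]-\binom{2i-1}{i-1}\sum_a\delta_0^{(a)}$ is effective.

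To bound the order of vanishing along $D=\Delta_0^{(a)}$ I would work at the generic point of $D$, where the local ring of $\RgPrime{g,\ell}$ is a discrete valuation ring with uniformiser $t$. In local trivialisations $\chi$ is an $N\times N$ matrix over this ring, and Smith normal form writes it as $U\,\mathrm{diag}(t^{e_1},\dots,t^{e_N})\,V$ with $U,V$ invertible and $0\le e_1\le\dots\le e_N$; the corank of $\chi$ along $D$ is $\#\{j: e_j\ge 1\}$, and $\mathrm{ord}_D(\det\chi)=\sum_j e_j$ is at least that corank. Since corank is upper semicontinuous and $D$ is irreducible, it therefore suffices to exhibit \emph{one} point $[X,\eta]$ of $\Delta_0^{(a)}\cap\RgPrime{g,\ell}$ at which $\dim\ker\chi_{[X,\eta]}\ge\binom{2i-1}{i-1}$; and for a bundle map between bundles of equal rank, $\dim\ker\chi_{[X,\eta]}$ is exactly the corank.

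I would take the general such point, $X=C\cup_{p,q}E$ with $E\cong\P^1$ exceptional and $[C,p,q]\in\Mg{g-1,2}$ Petri general, so that $[X,\eta]\in\RgPrime{g,\ell}$. By the construction of $\chi$ recalled above, $\ker\chi_{[X,\eta]}=V:=H^0(X,\wedge^iM_{\omega_X\otimes\eta}\otimes\omega_X\otimes\eta^{-1})$. Using $\omega_X|_E=\mathcal{O}_E$ and $\omega_X|_C=K_C(p+q)$ one identifies the restrictions of $M_{\omega_X\otimes\eta}$: it is $\mathcal{O}_E(-1)\oplus\mathcal{O}_E^{\oplus(g-3)}$ on $E$ and $M:=M_{K_C(p+q)\otimes\eta_C}$ on $C$, a bundle of rank $2i$. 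A Mayer--Vietoris sequence realises $V$ as the kernel of a map out of $H^0(C,\wedge^iM\otimes K_C(p+q)\otimes\eta_C^{-1})\oplus H^0(E,\wedge^i(\mathcal{O}_E(-1)\oplus\mathcal{O}_E^{\oplus(g-3)})\otimes\mathcal{O}_E(-1))$ into the gluing space $H^0(\wedge^iM_{\omega_X\otimes\eta}\otimes\omega_X\otimes\eta^{-1}|_{p+q})$. The $E$-summand has no sections (both of its summands have negative degree on $\P^1$), the gluing space has dimension $2\binom{2i}{i}$, and Riemann--Roch on $C$ — using that $\wedge^iM$ has rank $\binom{2i}{i}$ and degree $-(2g-3)\binom{2i-1}{i-1}$, and that Petri generality of $[C,p,q]$ forces the relevant $H^1$ to vanish so that Riemann--Roch is an equality — gives $h^0(C,\wedge^iM\otimes K_C(p+q)\otimes\eta_C^{-1})\ge 5\binom{2i-1}{i-1}$. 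Hence $\dim V\ge 5\binom{2i-1}{i-1}-2\binom{2i}{i}=\binom{2i-1}{i-1}$, a bound manifestly independent of $a$.

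Putting this together, $\chi$ has corank at least $\binom{2i-1}{i-1}$ along every $\Delta_0^{(a)}$, so $\det\chi$ vanishes to that order along each, and therefore $[\overline{\mathcal{D}}_{g,\ell}]^\virt-\binom{2i-1}{i-1}\sum_{a=1}^{\lfloor\ell/2\rfloor}\delta_0^{(a)}$ is effective. I expect the cohomological input on the reducible curve $X$ to be the main point requiring care: one has to correctly compute $M_{\omega_X\otimes\eta}|_C$ and $M_{\omega_X\otimes\eta}|_E$, and the Petri generality of $[C,p,q]$ is essential both to make the Riemann--Roch bound an equality of the right shape and to ensure $[X,\eta]$ genuinely lies on $\RgPrime{g,\ell}$; the remaining ingredients (Porteous, Smith normal form, semicontinuity, normality of $\RgPrime{g,\ell}$ at the generic point of $\Delta_0^{(a)}$) are routine.
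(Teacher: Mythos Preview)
Your approach is essentially the same as the paper's: both compute a lower bound on the corank of $\chi$ at a general point of $\Delta_0^{(a)}$ via the Mayer--Vietoris analysis of $H^0(X,\wedge^i M_{\omega_X\otimes\eta}\otimes\omega_X\otimes\eta^{-1})$ on $X=C\cup_{p,q}E$, obtaining the same Riemann--Roch bound $5\binom{2i-1}{i-1}-2\binom{2i}{i}=\binom{2i-1}{i-1}$. Your Smith normal form discussion makes explicit the passage from corank to order of vanishing of $\det\chi$, which the paper leaves implicit in the phrase ``degenerate to this order''.

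Two small wobbles worth flagging. First, your semicontinuity sentence is stated backwards: upper semicontinuity of corank says the corank at a \emph{special} point is at least the corank at the generic point, so exhibiting one point with large corank does not, by itself, bound the generic corank from below. Fortunately you then compute at the \emph{general} point of $\Delta_0^{(a)}$ anyway, so the semicontinuity remark is superfluous and can simply be deleted. Second, the assertion that Petri generality forces the relevant $H^1$ to vanish is neither obvious nor needed: Riemann--Roch already gives the lower bound $h^0\geq\chi=5\binom{2i-1}{i-1}$, which is all the argument requires (and all the paper claims).
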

\begin{example}
  For $g = 8$ (i.e.\@ $i = 3$) and $\ell = 3$ we obtain that
  \begin{equation*}
    [\overline{\mathcal{D}}_{8,3}]^\virt - 10 \delta_0^{(1)} \equiv 38 \lambda - 6(\delta_0' + \delta_0'') - \frac{32}{3}\delta_0^{(1)}
  \end{equation*}
  is effective.
\end{example}

\subsection{Degeneracy of \texorpdfstring{$\bm{{\MukaiDivBar{8}{3}}}$}{D83} on the boundary}

Recall that our strategy to calculate the divisor class
of $\MukaiDivBar{8}{3}$ was to globalize the map
\begin{equation*}
  H^0(C, L \otimes \eta) \otimes
  \left( \frac{ H^0(C, L^{\otimes 2}) }{ \Sym^2 H^0(C, L) } \right)^\vee
  \rightarrow H^0(C, L \otimes \eta^{-1})^\vee
\end{equation*}
where $L \in W^3_9(C)$.
Using Tensor-$\Hom$ adjunction, this map fails to be injective
if and only if the bilinear map corresponding to the multiplication map
\begin{equation}
  \label{eq:multiplication-boundary}
  \mu_{[C, \eta, L]}\colon
  H^0(C, L \otimes \eta) \otimes H^0(C, L \otimes \eta^{-1})
  \rightarrow
  H^0(C, L^{\otimes 2}) / \Sym^2 H^0(C, L)
\end{equation}
is degenerate.  We will in fact show that for general $[X, \eta] \in \Delta_0^{(1)}$
and $L \in W^3_9(X)$ the map $\mu_{[X, \eta, L]}$ is the zero map,
i.e., the image of
$H^0(X, L \otimes \eta) \otimes H^0(X, L \otimes \eta^{-1}) \rightarrow H^0(X, L^{\otimes 2})$
is contained in the image of $\Sym^2 H^0(X, L)$.

A line bundle $L \in W^3_9(X)$ can be described as follows.  The restriction
$L_C = L|_C$ of $L$ to $C$ has the property $h^0(C, L_C(-p-q)) = 3$.
Since $L$ restricts to $\mathcal{O}_E$ on $E$,
any global section $s \in H^0(X, L)$ restricts to a constant on $E$ and hence the restriction
to $C$ has the same value at $p$ and $q$.
If $s|_E = 0$, then $s|_C \in H^0(C, L_C(-p-q))$.  If $s|_E$ is instead a nonzero constant
then $s|_C$ is a global section of $L_C$ which vanishes neither at $p$ nor at $q$.
Fix such a section $\sigma$.  Then we have an isomorphism
\begin{equation*}
  H^0(X, L) \cong H^0(C, L_C(-p-q)) \oplus \langle \sigma \rangle
\end{equation*}
We get
\begin{equation}
  \label{eq:sym-xl}
  \Sym^2 H^0(X, L) \cong \Sym^2 H^0(C, L_C(-p-q))
  \oplus \langle \sigma \otimes \sigma \rangle
  \oplus \big( \langle \sigma \rangle \otimes H^0(C, L_C(-p-q)) \big)
\end{equation}
and for $[C, p, q]$ general the map
\begin{equation*}
  \Sym^2 H^0(X, L) \rightarrow H^0(X, L^{\otimes 2})
\end{equation*}
is injective with one-dimensional cokernel.
On the other hand, by Riemann--Roch, we have
$\dim H^0(X, L^{\otimes 2}(-p-q)) = 9$ for the space of sections vanishing at $p$ and $q$.
Comparing this with the expression \eqref{eq:sym-xl} we see that all these
sections come from $\Sym^2 H^0(X, L)$.

Now we consider the space $H^0(X, L \otimes \eta^{-1})$.  On $E$ the line bundle $L \otimes \eta^{-1}$
restricts to $\mathcal{O}_E(-1)$ and on $C$ to $L_C \otimes \eta_C^{-1}$.
Since $H^0(E, \mathcal{O}_E(-1)) = 0$ we have the identity
\begin{equation*}
  H^0(X, L \otimes \eta^{-1}) = H^0(C, L_C \otimes \eta_C^{-1} \otimes \mathcal{O}_C(-p-q))
\end{equation*}
so all sections here vanish at $p$ and $q$.  This implies that the multiplication map
\begin{equation*}
  H^0(X, L \otimes \eta) \otimes H^0(X, L \otimes \eta^{-1}) \rightarrow H^0(X, L^{\otimes 2})
\end{equation*}
factors through $H^0(X, L^{\otimes 2}(-p-q))$ and hence through the image of $\Sym^2 H^0(X, L)$.
This means that the multiplication map $\mu_{[X, \eta, L]}$ is indeed zero.
We have proved:
\begin{proposition}
  The morphism $\phi\colon \mathcal{E} \rightarrow \mathcal{F}$ of \eqref{eq:degenmorph}
  between vector bundles on $\UnivGrdTors{3}{9}{3}$
  is degenerate to order $2$ over $\Delta_0^{(1)}$.  Hence
  $[Z_1(\phi)] - 2 \delta_0^{(1)}$ is effective and therefore
  \begin{equation*}
    [\MukaiDivBar{8}{3}]^\virt - 28 \delta_0^{(1)} = 196 \lambda - 28( \delta_0' + 2\delta_0'')
    - \frac{308}{3} \delta_0^{(1)}
  \end{equation*}
  is effective as well.
\end{proposition}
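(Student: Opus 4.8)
The plan is to analyze the behavior of the morphism $\phi = \DegenMorph_{8,3}$ from \eqref{eq:degenmorph} over a general point of the boundary divisor $\Delta_0^{(1)}$ and show that it vanishes to order exactly $2$ there, so that Porteous' class $[Z_1(\phi)]$ exceeds $[\MukaiDivBar{8}{3}]$ by at least $2\delta_0^{(1)}$ along that component. First I would recall that, by Tensor--Hom adjunction, the fiber of $\phi$ at $[C,\eta,L]$ is the linear map adjoint to the multiplication-plus-projection bilinear form $\mu_{[C,\eta,L]}$ of \eqref{eq:multiplication-boundary}, so $\phi$ dropping rank by $k$ is the same as $\mu$ having a $k$-dimensional kernel in the appropriate sense; in particular $\phi$ vanishes identically on a fiber (order-$2$ degeneracy, since both bundles have rank $2$ over $\UnivGrdTors{3}{9}{3}$ in genus $8$) exactly when $\mu_{[C,\eta,L]} \equiv 0$.

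Next I would set up the geometry of a general point of $\Delta_0^{(1)}$: write $X = C \cup_{p,q} E$ with $E \cong \PP^1$ exceptional, $[C,p,q] \in \Mg{7,2}$ Petri general, and $\eta_C = \eta|_C$ an $\ell$-torsion-ish bundle on $C$ with $\eta_C^{\otimes 3} = \mathcal{O}_C(-p-2q)$ (up to swapping $p,q$), so that $L|_E = \mathcal{O}_E$, $L\otimes\eta^{-1}|_E = \mathcal{O}_E(-1)$, and $L\otimes\eta|_E = \mathcal{O}_E(1)$. I would then describe $H^0(X,L)$, $H^0(X,L^{\otimes 2})$, $H^0(X,L\otimes\eta)$ and $H^0(X,L\otimes\eta^{-1})$ via the usual node-gluing/Mayer--Vietoris analysis. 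The key computations: (i) a section of $H^0(X,L)$ restricts to a constant on $E$, hence its restriction to $C$ takes equal values at $p,q$; decomposing via a section $\sigma$ nonvanishing at $p$ and $q$ gives $H^0(X,L) \cong H^0(C,L_C(-p-q)) \oplus \langle\sigma\rangle$ and the corresponding splitting \eqref{eq:sym-xl} of $\Sym^2 H^0(X,L)$; (ii) $H^0(E,\mathcal{O}_E(-1)) = 0$ forces $H^0(X,L\otimes\eta^{-1}) = H^0(C, L_C\otimes\eta_C^{-1}(-p-q))$, so \emph{every} section of $L\otimes\eta^{-1}$ vanishes at both $p$ and $q$; (iii) by Riemann--Roch on $X$, $h^0(X,L^{\otimes 2}(-p-q)) = 9$, and comparing with \eqref{eq:sym-xl} all nine of these sections already lie in the image of $\Sym^2 H^0(X,L)$ (for $[C,p,q]$ Petri general the map $\Sym^2 H^0(X,L) \to H^0(X,L^{\otimes 2})$ is injective with $1$-dimensional cokernel). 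Combining (ii) and (iii): the product of any section of $L\otimes\eta$ with any section of $L\otimes\eta^{-1}$ lands in $H^0(X,L^{\otimes 2}(-p-q)) \subseteq \Sym^2 H^0(X,L)$, so $\mu_{[X,\eta,L]}$ composed with the projection to the quotient is identically zero. Hence $\phi$ degenerates to order $2$ over $\Delta_0^{(1)}$.

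From this, since $Z_1(\phi) - 2\delta_0^{(1)}$ is effective (the divisor $Z_1(\phi)$ contains the boundary component $\Delta_0^{(1)}$ with multiplicity at least $2$, so subtracting $2\delta_0^{(1)}$ leaves an effective cycle), and since $[Z_1(\phi)]$ pushed forward to $\RgPrime{8,3}$ is $[\MukaiDivBar{8}{3}]^\virt = 196\lambda - 28(\delta_0' + 2\delta_0'') - \frac{14}{3}(2\cdot 9 - 1 + 1)\delta_0^{(1)} = 196\lambda - 28(\delta_0' + 2\delta_0'') - 84\,\delta_0^{(1)}$, we subtract $28\,\delta_0^{(1)}$ (the degeneracy order $2$ multiplied by $\deg\UnivGrdTorsMorph = 14$) and conclude that
\begin{equation*}
  196\lambda - 28(\delta_0' + 2\delta_0'') - \tfrac{308}{3}\delta_0^{(1)}
\end{equation*}
is effective, as claimed. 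The main obstacle I expect is verifying the dimension count in step (iii) — namely that the nine sections of $L^{\otimes 2}(-p-q)$ are genuinely all accounted for by $\Sym^2 H^0(X,L)$ and not merely a subspace of it — which requires knowing that the restriction $L_C$ is sufficiently general (Petri general with respect to $\mathcal{O}_C(p+q)$) so that no unexpected sections of $L_C^{\otimes 2}$ or $L_C^{\otimes 2}(-p-q)$ appear; here one uses the genericity built into the definition of $\RgPrime{8,\ell}$ and of $\BoundaryPCZero$, together with base-point-freeness of $L_C$ on the general curve.
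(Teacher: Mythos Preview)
Your proposal is correct and follows essentially the same approach as the paper: reinterpret $\phi$ via the bilinear multiplication map $\mu$, analyze a general point $[X,\eta,L]\in\Delta_0^{(1)}$ with $X=C\cup_{p,q}E$, observe that all sections of $L\otimes\eta^{-1}$ vanish at $p$ and $q$ because $(L\otimes\eta^{-1})|_E=\mathcal{O}_E(-1)$, and then check that the $9$-dimensional space $H^0(X,L^{\otimes 2}(-p-q))$ is already the image of the $9$-dimensional piece $\Sym^2 H^0(C,L_C(-p-q))\oplus\big(\langle\sigma\rangle\otimes H^0(C,L_C(-p-q))\big)$ of $\Sym^2 H^0(X,L)$, so $\mu\equiv 0$ and $\phi$ vanishes identically on the fiber, giving order-$2$ degeneracy of the determinant. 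One small arithmetic slip: in your pushforward computation you wrote the $\delta_0^{(1)}$-coefficient of $[\MukaiDivBar{8}{3}]^\virt$ as $\tfrac{14}{3}(2\cdot 9-1+1)=84$, but the formula has $-a\ell+a^2=-3+1$, giving $\tfrac{14}{3}\cdot 16=\tfrac{224}{3}$; with this correction your subtraction of $28\,\delta_0^{(1)}$ does yield $\tfrac{308}{3}$ as stated.
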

\begin{theorem}
  $\RgBar{8,3}$ is of general type.
\end{theorem}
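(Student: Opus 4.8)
The plan is to combine the two effective divisor classes obtained in the previous two subsections with the formula for the canonical class of $\RgBar{8,3}$ and the reduction in Lemma \ref{lem:irreducible-is-enough}. Specialising the canonical class formula to $g = 8$, $\ell = 3$ gives
\begin{equation*}
  K_{\RgBar{8,3}} = 13\lambda - 2(\delta_0' + \delta_0'') - 4\delta_0^{(1)}
  - 2\sum_{i=1}^{4}(\delta_i + \delta_{8-i} + \delta_{i:8-i}) - \delta_7,
\end{equation*}
so by Lemma \ref{lem:irreducible-is-enough} it is enough to exhibit a single effective divisor $E$ on $\RgBar{8,3}$ whose class has the shape $a\lambda - b_0'\delta_0' - b_0''\delta_0'' - b_0^{(1)}\delta_0^{(1)} - (\text{effective boundary})$ and satisfies $a/b_0' < 13/2$, $a/b_0'' < 13/2$ and $a/b_0^{(1)} < 13/4$.

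Neither of our two divisors does the job on its own: the improved Mukai class $[\MukaiDivBar{8}{3}]^\virt - 28\delta_0^{(1)} \equiv 196\lambda - 28(\delta_0' + 2\delta_0'') - \tfrac{308}{3}\delta_0^{(1)}$ has $a/b_0' = 7 > 13/2$, while the improved Koszul class $[\overline{\mathcal{D}}_{8,3}]^\virt - 10\delta_0^{(1)} \equiv 38\lambda - 6(\delta_0' + \delta_0'') - \tfrac{32}{3}\delta_0^{(1)}$ has $a/b_0^{(1)} = \tfrac{114}{32} > 13/4$. However each of the two repairs the defect of the other, so I would take $E$ to be a positive rational combination $\alpha\bigl([\MukaiDivBar{8}{3}]^\virt - 28\delta_0^{(1)}\bigr) + \beta\bigl([\overline{\mathcal{D}}_{8,3}]^\virt - 10\delta_0^{(1)}\bigr)$, realised by an honest effective divisor on $\RgBar{8,3}$ upon taking closures of the corresponding divisors on $\RgPrime{8,3}$; the closure picks up further contributions supported on $\delta_i$, $\delta_{8-i}$, $\delta_{i:8-i}$, $\delta_7$, whose coefficients do not affect the conclusion by Lemma \ref{lem:irreducible-is-enough}. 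The three inequalities translate into linear conditions on $\alpha/\beta$ which one checks cut out a nonempty open interval: for instance $\alpha : \beta = 1 : 20$ yields a class proportional to $956\lambda - 148\delta_0' - 176\delta_0'' - 316\delta_0^{(1)}$, whose ratios $\tfrac{956}{148}$, $\tfrac{956}{176}$, $\tfrac{956}{316}$ lie strictly below $\tfrac{13}{2}$, $\tfrac{13}{2}$, $\tfrac{13}{4}$ respectively.

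Given such an $E$, Lemma \ref{lem:irreducible-is-enough} produces a decomposition $K_{\RgBar{8,3}} \equiv \varepsilon\lambda + \gamma E + D$ with $\varepsilon > 0$, $\gamma \geq 0$ and $D$ an effective divisor supported on the boundary. Since the Hodge class $\lambda$ is big on $\RgBar{8,3}$ — being the pullback of the big Hodge class under the finite forgetful map $\RgForgetfulMap\colon \RgBar{8,3} \rightarrow \MgBar{8}$ — the canonical class $K_{\RgBar{8,3}}$ is big, and by the extension result for pluricanonical forms recalled earlier (\autocite{CEFS2013}, Remark 3.5) this computation carried out directly on the possibly singular space $\RgBar{8,3}$ already shows that $\RgBar{8,3}$ is of general type.

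Given everything established in the preceding sections, the step I expect to be the main obstacle is the numerical balancing above: the admissible range for $\alpha/\beta$ is narrow, so the boundary improvements of $\MukaiDivBar{8}{3}$ along $\delta_0''$ and $\delta_0^{(1)}$ and of $\overline{\mathcal{D}}_{8,3}$ along $\delta_0^{(1)}$ are all genuinely needed — dropping any one of them makes the combined class violate one of the three bounds. All the remaining ingredients (effectivity of the two input classes, their extension to $\RgBar{8,3}$, and bigness of $\lambda$) are already in place.
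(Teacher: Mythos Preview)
Your argument is correct and follows essentially the same route as the paper: form a positive combination of the improved Mukai class and the improved Koszul class and check the resulting ratios against the bounds of Lemma~\ref{lem:irreducible-is-enough}. The paper uses the ratio $\alpha:\beta = 1:35$ (namely $\tfrac{1}{119}$ and $\tfrac{5}{17}$) and writes the combination directly as $K_{\RgPrime{8,3}} - \tfrac{3}{17}\lambda$, but your choice $1:20$ lies in the same admissible interval $14 < \beta/\alpha < \tfrac{413}{10}$ and works just as well; the only inaccuracy is the aside that the $\delta_0''$ improvement is ``genuinely needed'' --- your own computation already succeeds with the unimproved coefficient $56$ for $\delta_0''$.
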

\begin{proof}
  We take the effective linear combination
  \begin{align*}
    \frac{1}{119} ( [\MukaiDivBar{8}{3}]^\virt - 28 \delta_0^{(1)} )
    + \frac{5}{17} ( [\overline{\mathcal{D}}_{8,3}] - 10 \delta_0^{(1)} )
    & \leq \frac{218}{17} \lambda - 2 ( \delta_0' + \delta_0'' ) - 4 \delta_0^{(1)}\\
    & = K_{\RgPrime{8,3}} - \frac{3}{17} \lambda
  \end{align*}
  hence $K_{\RgPrime{8,3}}$ is big.
  Now we invoke Remark 3.5 from \autocite{CEFS2013} to
  show that the same holds for $K_{\RgBar{8,3}}$.
\end{proof}

\printbibliography

\Addresses

\end{document}